\DeclareUrlCommand\emailUrl{\urlstyle{rm}}
\newcommand{\email}[1]{\href{mailto:#1}{\protect\emailUrl{#1}}}
\newcommand{\emailVierling}{\email{jannik.vierling@tuwien.ac.at}}
\newcommand{\emailHetzl}{\email{stefan.hetzl@tuwien.ac.at}}
\newtheorem*{theorem*}{Theorem}
\newtheorem{theorem}{Theorem}[section]
\newtheorem{proposition}[theorem]{Proposition}
\newtheorem{definition}[theorem]{Definition}
\newtheorem{lemma}[theorem]{Lemma}
\newtheorem{corollary}[theorem]{Corollary}
\newtheorem{conjecture}[theorem]{Conjecture}
\newtheorem*{conjecture*}{Conjecture}
\newtheorem{remark}[theorem]{Remark}
\newcommand{\VariableSymbols}{\mathfrak{V}}
\newcommand{\notdivides}[2]{#1 \nmid #2}
\newcommand{\ListInductionAxiom}[2]{I_{#1}{#2}}
\newcommand{\ListBigStepInductionAxiom}[3]{I_{#1\curvearrowright #2}{#3}}
\newcommand{\TwoStepInductionSchema}[1]{{#1}\text{-}\mathrm{IND}_{\curvearrowright 2}}
\newcommand{\ListBigStepInductionSchema}[2]{
  {#1}\text{-}\mathrm{IND}_{\curvearrowright{#2}}
}
\newcommand{\ListInductionSchema}[1]{{#1}\text{-}\mathrm{IND}}
\newcommand{\ListDoubleInductionAxiom}[2]{I_{#1}{#2}}
\newcommand{\ListDoubleInductionSchema}[1]{{#1}\text{-}\mathrm{DIND}}
\newcommand{\Open}{\mathrm{Open}}
\newcommand{\ClausesOf}[1]{\mathrm{Clause}(#1)}
\newcommand{\Var}{\mathit{Var}}
\newcommand{\SetOfFormulas}{\Phi}
\newcommand{\Formula}{\varphi}
\newcommand{\FormulaB}{\psi}
\newcommand{\FOStructure}{M}
\newcommand{\Language}{\mathcal{L}}
\newcommand{\NaturalNumbers}{\mathbb{N}}
\newcommand{\QuantifierParenthesis}[3]{({#1}{#2}){#3}}
\newcommand{\Quantifier}[3]{\QuantifierParenthesis{#1}{#2}{#3}}
\newcommand{\Forall}[2]{\Quantifier{\forall}{#1}{#2}}
\newcommand{\Exists}[2]{\Quantifier{\exists}{#1}{#2}}
\newcommand{\DomainOf}[2]{{#2}({#1})}
\newcommand{\Sort}[1]{\mathsf{#1}}
\newcommand{\SortIndividuals}{\Sort{i}}
\newcommand{\SortList}{\mathsf{list}}
\newcommand{\SortBool}{o}
\newcommand{\ConsSymbol}{\mathit{cons}}
\newcommand{\Cons}[2]{\ConsSymbol(#1,#2)}
\newcommand{\ConsMult}[2]{\ConsSymbol(#1;#2)}
\newcommand{\Nil}{\mathit{nil}}
\newcommand{\NilInterpreted}[1]{\Nil^{#1}}
\newcommand{\ListAppend}[2]{#1 \frown #2}
\newcommand{\ListAppendSymbolUntyped}{\frown}
\newcommand{\Theory}{T}
\newcommand{\LanguageListBase}{\Language_{0}}
\newcommand{\LanguageListA}{\Language_{A}}
\newcommand{\TheoryListBase}{T_{0}}
\newcommand{\TheoryListAppend}{\Theory_{1}}
\newcommand{\LanguageListAppend}{\Language_{1}}
\newcommand{\DocumentTitle}{
  Quantifier-free induction for lists
}
\author[1,2]{Stefan Hetzl\footnote{corresponding author}}
\author[1,3]{Jannik Vierling}
\affil[1]{Vienna University of Technology\protect\\Institute of Discrete Mathematics and Geometry}
\affil[2]{\emailHetzl}
\affil[3]{\emailVierling}
\date{}
\title{\DocumentTitle}
\begin{document}

\maketitle

\begin{abstract}
  We investigate quantifier-free induction for Lisp-like lists constructed inductively from the empty list \(\Nil\) and the operation \(\ConsSymbol\), that adds an element to the front of a list.
  First we show that, for \(m \geq 1\), quantifier-free \(m\)-step induction does not simulate quantifier-free \((m + 1)\)-step induction.
  Secondly, we show that for all \(m \geq 1\), quantifier-free \(m\)-step induction does not prove the right cancellation property of the concatenation operation on lists defined by left-recursion.
\end{abstract}

{\bf Keywords:}
weak theories of arithmetic,
theories of lists,
automated inductive theorem proving,
transfinite lists

\section{Introduction}


In this article we consider Lisp-like lists in the context of the automation of proof by mathematical induction.
The subject of \ac{AITP} aims at automating the process of proving statements about inductively constructed objects such as natural numbers, lists and trees.
The formal verification of software is a particularly prominent application of automated inductive theorem proving.
Since every non-trivial program contains loops or recursion, some form of mathematical induction is necessary to reason about such programs.
By Gödel's incompleteness theorem the task addressed by \ac{AITP} is in general not even semi-decidable.
Therefore, there is a lot more freedom in the choice of the proof systems
 than in the case of first-order validity.
For that reason and because of technical constraints, a great variety of methods have been developed for that purpose.
To name just a few examples, there are methods based on recursion analysis \cite{bundy1989}, integration into saturation-based provers \cite{reger2019,kersani2013,cruanes2017}, cyclic proofs \cite{brotherston2012}, theory exploration \cite{claessen2013a}, proof by consistency \cite{comon2001}.

The current methodology in automated inductive theorem proving concentrates primarily on the implementation of systems and their empirical evaluation.
The work in this article is part of a research program that aims at complementing this state of the art by focusing on the formal analysis of methods for automated inductive theorem proving.
In particular, we aim at understanding the theoretical limits of systems by developing upper bounds on the logical strength of methods.
Establishing sufficiently tight upper bounds on the strength of \ac{AITP} systems often allows us to provide practically meaningful unprovability results whereas an empirical evaluation only shows the failure of a particular implementation.
Moreover, upper bounds typically reveal the particular form of induction underlying the \ac{AITP} systems.
This knowledge permits the direct comparison of methods and helps in judging the applicability of \ac{AITP} systems to certain domains.

So far the work in this research program \cite{wong2018,hetzl2020,hetzl2022,hetzl2023apal,vierling2022} has concentrated on induction for natural numbers only.
However, since lists and other inductive data types are fundamental structures of computer science, it is of paramount importance for the subject of \ac{AITP} to analyze the mechanical properties of these inductive datatypes.
In this article we make a first important step towards extending this research program to inductively defined lists.
In particular, we show that the right cancellation property of the concatenation of
 lists is not provable by a form of induction used in some automated inductive theorem proving systems.
With this result we pave the way for obtaining further unprovability results for \ac{AITP} systems on lists and other inductive data types.




In the following we briefly mention some aspects of axiomatic theories of finite lists have been studied in theoretical computer science.
In \cite{moore1981} an axiomatic theory of linear lists (Lisp-like lists) is defined and some basic results about consistency, completeness, and independency of the axioms are shown.
Similar theories are considered in a more general setting in \cite{oppen1978}.
In \cite{goncharov1986,bazhenov2015,aleksandrova2019list} the computability aspects of list structures are investigated.

Axiomatic theories of lists are closely related to theories of concatenation studied in logic \cite{tarski1935,quine1946}.
Theories of concatenation axiomatise strings of symbols over a finite alphabet.
Theories of concatenation have been proposed as alternative basic systems for the
 development of metamathematical results such as Gödel's incompleteness theorems and
 computability \cite{visser2006,grzegorczyk2005,grzegorczyk2008,corcoran1974,thatcher1966}.
In such theories there is no need to develop a coding of finite
 sequences \cite{quine1946,grzegorczyk2005}.
Hence, theories of concatenation permit a more natural development of syntax.


In this article we consider the provability of the right-cancellation of the concatenation of finite lists from quantifier-free big-step first-order induction for Lisp-like lists.
After recalling some basic concepts and notations in \cref{sec:preliminaries},
  we show the two main results of this article in \cref{sec:two_step,sec:right_cancellation}.
First, in \cref{sec:two_step}, we show that in general \(m\)-step quantifier-free induction does not prove \((m+1)\)-step quantifier-free induction.
This results sets induction on lists in contrast with induction for natural numbers where big-step quantifier-free induction is not stronger than one-step quantifier-free induction.
Secondly, in \cref{sec:right_cancellation}, we show that for all \(m \geq 1\), \(m\)-step induction, over the language consisting of the list constructors and a concatenation operator, does not prove the right cancellation property of the concatenation operation.
In order to show these unprovability results we will construct models whose domain contains
 sequences of transfinite length.

\section{Preliminaries}
\label{sec:preliminaries}

In this section we introduce some concepts, notations, and results that we will use throughout the article.
In \cref{sec:preliminaries:logic} we recall some basic concepts and notations of many-sorted first-order logic.
\Cref{sec:preliminaries:induction} defines some basic axioms of the list constructors and the traditional induction schema for lists as well as related terminology.
Finally, in \cref{sec:preliminaries:sequences} we introduce some concepts on transfinite sequences, which we will use in the model theoretic constructions of \cref{sec:two_step} and \cref{sec:right_cancellation}.

\subsection{Many-sorted first-order logic}
\label{sec:preliminaries:logic}
We work in the setting of classical many-sorted first-order logic with equality.
Let \(S\) be a finite set of sorts, then for each sort \(s \in S\) we let \(\VariableSymbols_{s}\) be a countably infinite set of variable symbols of the sort \(s\).
We write \(x : s\) to indicate that \(x\) is a variable symbol of sort \(s\), that is, \(x \in \VariableSymbols_{s}\).
When the sort of a variable is irrelevant or clear from the context, we omit the sort annotation and simply use the variable symbol.
We assume that the sets of variable symbols for the sorts in \(S\) are pairwise disjoint.
A many-sorted first-order language \(\Language\) over the sorts \(S\) is a set of predicate symbols of the form \(P: s_{1} \times \dots \times s_{n} \to o\) and function symbols of the form \(f: s_{1} \times \dots \times s_{n} \to s_{n+1}\), where \(P, f\) are symbols, \(s_{1}, \dots, s_{n}, s_{n + 1} \in S\) and \(o\) is a special sort symbol assumed not to appear in \(S\).
For a function symbol \(f\) the expression \(f: s_{1} \times \dots \times s_{n} \to s_{n + 1}\) with \(s_{1}, \dots, s_{n + 1} \in S\) indicates that \(f\) takes arguments of sorts \(s_1\), \dots, \(s_{n}\) to a value of sort \(s_{n + 1}\).
Similarly, for a predicate symbol \(P\) the expression of the form \(P : s_{1} \times \dots \times s_{n} \to \SortBool\) indicates that \(P\) is a predicate with arguments of sorts \(s_{1}\), \dots, \(s_{n}\).
Terms of \(\Language\) are constructed as usual from the variable symbols  and function symbols according to their respective types.
Each thus constructed term \(t\) has a uniquely determined sort \(s\) and, therefore, we call \(t\) an \(s\)-term.
Formulas of \(\Language\) are constructed from terms, predicate symbols, the connectives \(\top\), \(\bot\), \(\wedge\), \(\neg\), \(\vee\), \(\rightarrow\) and the quantifiers \(\Forall{x:s}{}\), \(\Exists{x:s}{}\) for \(s \in S\) and \(x \in \VariableSymbols_{s}\).

In this article we will make heavy, albeit elementary, use of model theoretic techniques.
Hence, we recall some basic model theoretic concepts and notations.
A first-order structure \(M\) for the language \(\Language\) (over sorts \(S\)) is a function that assigns: To each sort \(s \in S\) a non-empty set \(\DomainOf{s}{M}\); To each function symbol \(f: s_{1} \times \dots \times s_{n} \to s_{n + 1}\) a function \(f^{M}: \bigtimes_{i = 1}^{n}\DomainOf{s_{i}}{M} \to \DomainOf{s_{n + 1}}{M}\); To each predicate symbol \(P: s_{1} \times \dots \times s_{n} \to o\) a set \(P^{M} \subseteq \bigtimes_{i = 1}^{n}\DomainOf{s_{i}}{M}\).
A variable assignment \(\sigma\) is a function that assigns to each variable symbol \(v : s\) with \(s \in S\) an element of \(\DomainOf{s}{M}\).
We write \(M, \sigma \models \varphi\) if the formula \(\varphi\) is true in \(M\) under the variable assignment \(\sigma\).
Let \(\varphi(x_{1} : s_{1}, \dots, x_{n} : s_{n},\vec{y})\) be a formula and \(d_{i} \in \DomainOf{s_{i}}{M}\) for \(i = 1, \dots, n\), then we write \(M, \{ x_{i} \mapsto d_{i} \mid i = 1, \dots, n\} \models \varphi\) (or \(M \models \varphi(d_{1}, \dots, d_{n},\vec{y})\)) if \(M, \sigma \models \varphi\), for all variable assignments \(\sigma\) with \(\sigma(x_{i}) = d_{i}\) for \(i = 1, \dots, n\).
Thus, in particular, \(M \models \varphi\) if \(M,\sigma \models \varphi\) for all variable assignments \(\sigma\).
Let \(t(x_{1} : s_1, \dots, x_{n} : s_n)\) be a term and \(d_{1}, \dots, d_{n}\) a finite sequence in \( \DomainOf{s_{1}}{M}\times \cdots \times \DomainOf{s_{n}}{M}\), then we write \(t^{M}(\vec{d})\) to denote the element \(b\) of \(M\) such that \(M, \{ x_{i} \mapsto d_{i} \mid i = 1, \dots, n\} \models t = b\).

In the arguments given in \cref{sec:two_step,sec:right_cancellation} it is often necessary to consider terms and formulas of a language \(\Language\) under some partial variable assignment over an \(\Language\) structure \(M\).
In order to simplify the notation, we let \(\Language(M)\) denote the language \(\Language\) extended by a fresh function symbol \(c_{d} : s\) for each element \(d \in \DomainOf{s}{M}\) and sort \(s \in S\).
Moreover, we let the structure \(M\) interpret the language \(\Language(M)\) by letting \(M\) interpret \(c_{d}\) as the object \(d\).

In this article we define a theory \(T\) to be a set of sentences, which we call the axioms of \(T\).
Let \(\varphi\) be formula, then we write \(T \vdash \varphi\) if \(\varphi\) is provable in (many-sorted) first-order logic from the axioms of \(T\).
Let \(T_{1}, T_{2}\) be theories, then \(T_{1} + T_{2}\) denotes the theory axiomatized by the set of sentences \(T_{1} \cup T_{2}\).

Finally, let us define some notation for some particular sets of formulas.
By \(\Open(\Language)\) we denote the set of quantifier-free formulas of the language \(\Language\).
Let \(\SetOfFormulas\) be a set of formulas, then we write \(\forall_{1}(\SetOfFormulas)\) (\(\exists_{1}(\SetOfFormulas)\)) for the set of formulas in \(\SetOfFormulas\) of the form \(\Forall{\vec{x}}{\varphi}\) (\(\Exists{\vec{x}}{\varphi}\)) where \(\varphi\) is a quantifier-free formula and \(\vec{x}\) is a possibly empty sequence of variables.
We also write \(\forall_{1}(\Language)\) for the formulas of the above form in the language \(\Language\).
\subsection{Induction and lists}
\label{sec:preliminaries:induction}
In this section we introduce the basic construction of finite Lisp-like lists that we work with in this article.
We also recall the traditional induction schema for lists and its related terminology.
Throughout the article we will consider various forms of induction that will be defined when needed.
We use the traditional induction schema as defined in this section as a reference in the sense that we justify the other induction schemata in terms of the traditional one.

Now we will define the basic language of finite Lisp-like lists and the corresponding induction schema.
\begin{definition}
  \label{def:1}
  The language \(\LanguageListBase\) consists of the sort \(\SortIndividuals\) of elements and the sort \(\SortList\) of finite lists.
  Moreover, the language \(\LanguageListBase\) contains the function symbols \(\Nil : \SortList\) and \(\ConsSymbol : \SortIndividuals \times \SortList \to \SortList\).
\end{definition}
Informally, the symbol \(\Nil\) denotes the empty list and \(\ConsSymbol\) denotes the operation that adds a given element to the front of a given list.
For the sake of legibility we will use upper case letters \(X\), \(Y\), \(Z\) and variants thereof to denote variables that range over the sort \(\SortList\).
For these variables we omit the the sort annotation, that is, the ``: \(\SortList\)'' part.

The traditional induction schema for Lisp-like lists is analogous to the one for natural numbers with the exception that the induction step also quantifies over elements.
\begin{definition}
  \label{def:induction_schema_list}
  Let \(\varphi(X,\vec{z})\) be a formula, then the formula \(\ListInductionAxiom{X}{\varphi}\) is given by
  \[
    \left( \varphi(\Nil,\vec{z}) \wedge \Forall{X}{\Forall{x}{\left( \varphi(X, \vec{z}) \rightarrow \varphi(\Cons{x}{X}, \vec{z})) \right)}} \right) \rightarrow \Forall{X}{\varphi(X,\vec{z})}.
  \]
  For a set of formulas \(\SetOfFormulas\), the theory \(\ListInductionSchema{\SetOfFormulas}\) is axiomatized by the universal closure of the formulas \(\ListInductionAxiom{X}{\varphi}\), where \(\varphi(X,\vec{z}) \in \SetOfFormulas\).
\end{definition}
The induction schema given above is parameterized by the set of possible induction formulas.
This permits to consider various theories by varying the structure of the induction formulas.

We will also refer to the above induction principle as one-step induction, since the induction step proceeds by a step of size one.
In \cref{sec:two_step} we will introduce the big-step induction principle that proceeds in larger steps. 

When we work with theories of lists we usually work over the following base theory that provides the disjointness and the injectivity of the list constructors \(\Nil\) and \(\ConsSymbol\).
\begin{definition}
  \label{def:theory_list_base}
  The theory \(\TheoryListBase\) is axiomatized by the following axioms
  \begin{gather}
    \Nil \neq \Cons{x}{X}, \tag{L0.1} \label{ax:thlistbase:1}
    \\
    \Cons{x}{X} = \Cons{y}{Y} \rightarrow x = y \wedge X = Y. \tag{L0.2} \label{ax:thlistbase:2}
  \end{gather}
\end{definition}
\subsection{Transfinite sequences}
\label{sec:preliminaries:sequences}
In this section we introduce some notations and definitions related to transfinite sequences, that is, sequences indexed by ordinals.
Later on in \cref{sec:two_step,sec:right_cancellation}, we will heavily rely on transfinite sequences, of length up to \(\omega^{3}\), for the construction of non-standard models of induction over lists.

Let \(\mathcal{X}\) be a set and \(\alpha\) be an ordinal number, then as usual \(\mathcal{X}^{\alpha}\) denotes the set \(\{ f: \alpha \to \mathcal{X} \}\) of sequences of elements of \(\mathcal{X}\) with length \(\alpha\).
By \(\mathcal{X}^{<\alpha}\) (\(\mathcal{X}^{\leq \alpha}\)) we denote the set \(\bigcup_{\beta < \alpha}\mathcal{X}^{\beta}\) (\(\bigcup_{\beta \leq \alpha}\mathcal{X}^{\beta}\)).
In particular, we denote by \(\mathcal{X}^{*}\) the set \(\mathcal{X}^{<\omega} = \bigcup_{i \in \NaturalNumbers}\mathcal{X}^{i}\) of all finite sequences of elements of \(\mathcal{X}\).
Let \(a \in \mathcal{X}^{\leq \alpha}\), then \(|a|\) denotes the ordinal \(\beta \leq \alpha\) such that \(a \in \mathcal{X}^{\beta}\).
The empty sequence \(()\) (\(= \varnothing\)) is also denoted by \(\varepsilon\) and \((x)\) denotes the one element sequence \(\{ 0 \mapsto x \}\).

In the following we define concatenation of ordinal indexed sequences.
The definition as given below relies on the well-definedness of ordinal subtraction of an ordinal \(\beta\) from an ordinal \(\alpha\) when \(\alpha \geq \beta\) (see \cite[Theorem~8.8]{takeuti1971}).
\begin{definition}
  \label{def:concatenation_ord_indexed_sequences}
  Let \(\mathcal{X}\) be a set, \(\alpha, \beta\) ordinals, \(a \in \mathcal{X}^{\alpha}\), and \(b \in \mathcal{X}^{\beta}\), then the sequence \(a \frown b \in \mathcal{X}^{\alpha + \beta}\) is defined by
  \begin{gather*}
  (a \frown b)_{\gamma} \coloneqq
 \begin{cases}
   a_{\gamma} & \text{if \(\gamma < \alpha\)}
   \\
   b_{\delta} & \text{otherwise}
 \end{cases},
\end{gather*}
where \(\gamma < \alpha + \beta\) and \(\delta\) is the unique ordinal such that \(\alpha + \delta = \gamma\).
\end{definition}
Observe that the definition of concatenation of ordinal indexed sequences given above generalizes the concatenation of finite sequences, since
\begin{gather*}
  (a_{0}, \dots, a_{n-1}) \frown (b_{0}, \dots, b_{m-1}) = (a_{0}, \dots, a_{n-1}, b_{0}, \dots, b_{m- 1}).
\end{gather*}
The concatenation of ordinal indexed sequences as defined above has some interesting properties.
\begin{lemma}
  \label{lem:ordinal_indexed_sequence_concatenation_has_left_cancellation}
  Let \(\mathcal{X}\) be a set, \(\alpha, \beta, \gamma\) ordinals, \(a \in \mathcal{X}^{\alpha}\), \(b \in \mathcal{X}^{\beta}\), \(c \in \mathcal{X}^{\gamma}\), then we have:
  \begin{enumerate}[label=(\roman*)]
  \item associativity: \(a \frown (b \frown c) = (a \frown b) \frown c\);
  \item left cancellation: If \(a \frown b = a \frown c\), then \(b = c\).
  \end{enumerate}
\end{lemma}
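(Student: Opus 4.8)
The plan is to prove both properties directly from \cref{def:concatenation_ord_indexed_sequences} by reasoning about the index sets, using the well-definedness of ordinal subtraction and the associativity of ordinal addition as the underlying arithmetic facts. Throughout, I would identify each sequence with its underlying function and argue that the two sides of each claimed equation are functions with the same domain that agree pointwise.

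For associativity, first I would observe that both $a \frown (b \frown c)$ and $(a \frown b) \frown c$ are sequences of length $\alpha + \beta + \gamma$; this already matches their domains, using the associativity of ordinal addition so that the parenthesisation of $\alpha + \beta + \gamma$ is irrelevant. Then I would fix an index $\eta < \alpha + \beta + \gamma$ and compute the value on both sides by unfolding the case distinction of \cref{def:concatenation_ord_indexed_sequences}. The natural case split is according to whether $\eta < \alpha$, whether $\alpha \leq \eta < \alpha + \beta$, or whether $\alpha + \beta \leq \eta$. In each case both sides evaluate to the same entry of $a$, $b$, or $c$; the only care required is tracking the ordinal $\delta$ witnessing the subtraction at each application of the definition and checking, via associativity of ordinal addition, that these witnesses are consistent across the nested applications. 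This is routine once the case split is set up, so I would not grind through all three subcases in full.

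For left cancellation, I would assume $a \frown b = a \frown c$. Comparing lengths gives $\alpha + \beta = \alpha + \gamma$, and by left cancellation of ordinal addition (a standard consequence of the uniqueness in ordinal subtraction, \cite[Theorem~8.8]{takeuti1971}) this yields $\beta = \gamma$, so $b$ and $c$ have the same domain. To see they agree pointwise, I would fix $\delta < \beta$ and consider the index $\gamma' \coloneqq \alpha + \delta$, which satisfies $\alpha \leq \gamma' < \alpha + \beta$. Evaluating both sides of the hypothesis at $\gamma'$ and applying the second clause of \cref{def:concatenation_ord_indexed_sequences}, using that $\delta$ is by construction the unique ordinal with $\alpha + \delta = \gamma'$, gives $b_{\delta} = (a \frown b)_{\gamma'} = (a \frown c)_{\gamma'} = c_{\delta}$, as required.

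The main obstacle, such as it is, lies in the bookkeeping of the subtraction witnesses in the associativity argument: one must be careful that the $\delta$ produced at the outer application of $\frown$ is precisely the index fed into the inner application, and that these match on both sides. This is purely a matter of invoking the associativity and left-cancellativity of ordinal addition at the right moments; there is no genuine difficulty, and no auxiliary lemma beyond the ordinal arithmetic already cited is needed.
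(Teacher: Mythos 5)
Your proposal is correct and follows essentially the same route as the paper: both sides are shown to have length \(\alpha+\beta+\gamma\) via associativity of ordinal addition, associativity is verified pointwise by the same three-way case split (with the third case reducing to the consistency of the subtraction witnesses, exactly the \(\delta_{2}=\delta_{3}\) check the paper carries out), and left cancellation is obtained by comparing lengths, cancelling \(\alpha\) in \(\alpha+\beta=\alpha+\gamma\), and evaluating both sides at the indices \(\alpha+\delta\). No gap; the only difference is that the paper writes out the witness bookkeeping in the third case in full, which you correctly identify as routine.
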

\begin{proof}
  For (\textit{i}) let \(\mu < \alpha + \beta + \delta\).
  By the associativity of ordinal addition we have \(a \frown (b \frown c), (a\frown b) \frown c \in \mathcal{X}^{\alpha+\beta+\gamma}\).
  Now we have to consider three cases.
  If \(\mu < \alpha\), then \(\mu < \alpha + \beta\).
  Hence \[
    (a \frown (b \frown c))_{\mu} = a_{\mu} = (a \frown b)_{\mu} = ((a \frown b) \frown c)_{\mu}.
  \]
  If \(\alpha \leq \mu < \alpha + \beta\), then there is a unique ordinal \(\delta\) such that \(\alpha + \delta = \mu\).
  Moreover, by the monotonicity properties of ordinal addition we have \(\delta < \beta\).
  Hence
  \[
    (a \frown (b \frown c))_{\mu} = (b \frown c)_{\delta} = b_{\delta} = (a \frown b)_{\mu} = ((a \frown b) \frown c)_{\mu}.
  \]
  Finally, if \(\alpha + \beta \leq \mu < \alpha + \beta + \gamma\), then there are unique \(\delta_{1}\) and \(\delta_{2}\) such that \(\alpha + \delta_{1} = \mu\) and \(\alpha + \beta + \delta_{2} = \mu\).
  Furthermore, we have \(\delta_{1} \geq \beta\), hence there is \(\delta_{3}\) such that \(\beta + \delta_{3} = \delta_{1}\).
  Thus \(\mu = \alpha + \delta_{1} = \alpha + \beta + \delta_{3} = \alpha + \beta + \delta_{2}\), hence \(\delta_{2} = \delta_{3}\).
  Therefore
  \[
    (a \frown (b \frown c))_{\mu} = (b \frown c)_{\beta + \delta_{3}} = c_{\delta_{3}} = c_{\delta_{2}} = ((a \frown b) \frown c)_{\mu}.
  \]
  
  For (\textit{ii}) observe that since \(a \frown b \in \mathcal{X}^{\alpha + \beta}\), \(a \frown c \in \mathcal{X}^{\alpha + \gamma}\) we have \(\alpha + \beta = \alpha + \gamma\) and therefore by the left cancellation of ordinal addition \(\beta = \gamma\).
  Now let \(\delta < \beta\), then we have \((a \frown b)_{\alpha + \delta} = b_{\delta} = c_{\delta} = (a \frown c)_{\alpha + \delta}\).
  Hence, \(b = c\).
\end{proof}
Observe, however, that since already ordinal addition does not have right cancellation, the concatenation of ordinal indexed sequences does clearly also not have right cancellation.

For sequences we will often be interested in suffixes.
In the following definition we introduce some notation for accessing the suffix of a sequence.
\begin{definition}[Sequence suffix]
  \label{def:suffix}
  Let \(\mathcal{X}\) be a set, \(\alpha, \beta\) ordinals with \(\beta \leq \alpha\), and \(a \in \mathcal{X}^{\alpha}\), then the sequence \(a \uparrow \beta\) is given by
  \begin{equation*}
    (a \uparrow \beta)_{\gamma} = a_{\beta + \gamma},
  \end{equation*}
  for \(\gamma < \mu\) where \(\mu\) is the unique ordinal such that \(\beta + \mu = \alpha\).
\end{definition}
Finally, let us give some notation for the sequence obtained by concatenating sequences of uniform length.
This construction will be used in \cref{sec:right_cancellation} and relies on ordinal division with remainder (see \cite[Theorem~8.27]{takeuti1971}). 
\newcommand{\flatten}[1]{\lfloor #1 \rfloor}
\begin{definition}
  \label{def:transfinite_sequence_flattening}
  Let \(\mathcal{X}\) be a set, \(\alpha, \beta\) ordinals with \(\alpha > 0\), and \(\mathfrak{a} \in (\mathcal{X}^{\alpha})^{\beta}\).
  The sequence \(\flatten{\mathfrak{a}} \in \mathcal{X}^{\alpha\cdot\beta}\) is defined by
  \[
    \flatten{\mathfrak{a}}_{\xi} \coloneqq \mathfrak{a}_{\delta,\mu},
  \]
  for \(\xi < \alpha \cdot \beta\) where \(\mu, \delta\) are the unique ordinals such that \(\xi = (\alpha \cdot \delta) + \mu \) with \(\mu < \alpha\).
  Furthermore, for \(a \in \mathcal{X}^{\alpha}\), we denote by \(a^{\beta}\) the sequence \(\flatten{(a)_{\gamma<\beta}}\) consisting of \(\beta\) times the sequence \(a\).
\end{definition}

\section{Big-step induction}
\label{sec:two_step}
Big-step induction is a generalization of the induction principle of \cref{def:induction_schema_list} in which the induction step proceeds by adding more than one element.
Big-step induction and other induction principles are often used in automated inductive theorem provers \cite{bundy2005rippling}.
Some formulas can be proved more naturally by a special induction principle.
Hence a special induction principle may allow a prover to find a proof faster under the constraints of its proof search algorithm or even enable the prover to prove the formula in the first place \cite{vierling2022}.
It is therefore interesting to investigate the relation between the one-step induction principle and special induction principles implemented in \ac{AITP} systems.

In this section we show the first main result of this article, namely that, for all \(m \geq 1\), quantifier-free \((m + 1)\)-step induction for lists does not follow from quantifier-free \(m\)-step induction.
In particular, quantifier-free big-step induction for lists cannot be reduced to quantifier-free one-step induction, which is in contrast to induction on natural numbers where such a reduction is possible (see for example \cite{vierling2022}).

The definition below defines the big-step induction principle for lists considered in this article.
Let us introduce some notation to make it easier to state big-step induction for lists.
Let \(t_{1}, \dots, t_{n}\) be a possibly empty list of terms of sort \(\SortIndividuals\) and \(T\) a term of sort \(\SortList\), then the term \(\ConsMult{t_{1}, \dots, t_{n}}{T}\) is defined inductively by
\begin{gather*}
  \ConsMult{}{T} = T, \\
  \ConsMult{t_{1}, \dots, t_{n + 1}}{T} = \Cons{t_{1},\dots,t_{n}}{\Cons{t_{n+1}}{T}}.
\end{gather*}
\begin{definition}
  \label{def:big_step_list_induction}
  Let \(\Formula(x,\vec{z})\) be a formula and \(m \geq 1\), then the formula \(\ListBigStepInductionAxiom{x}{m}{\Formula}\) is given by
  \begin{multline*}
    \left(
    \begin{split}
      & \bigwedge_{i = 1, \dots, m} \Forall{x_1,\dots,x_{i-1}} \Formula(\ConsMult{x_{1}, \dots, x_{i-1}}{\Nil},\vec{z})
      \\
      & \wedge \Forall{X}{\Forall{x_{1}, \dots, x_{m}}{\left(\Formula(X,\vec{z}) \rightarrow \Formula(\ConsMult{x_{1},\dots,x_{m}}{X},\vec{z})\right)}}
    \end{split}
  \right) \rightarrow \Forall{X}{\Formula(X,\vec{z})}.
  \end{multline*}
  Let \(\SetOfFormulas\) be a set of formulas and \(m \geq 1\), then the \(m\)-step induction schema \(\ListBigStepInductionSchema{\SetOfFormulas}{m}\) over \(\SetOfFormulas\) is axiomatized by the universal closure of the formulas \(\ListBigStepInductionAxiom{x}{m}{\Formula}\) where \(\Formula(x,\vec{z}) \in \SetOfFormulas\).
\end{definition}
A simple example of formulas that have natural proofs by big-step induction are the acyclicity formulas given below, which express that adding a finite number \(n \geq 1\) of elements to a list results in a different list:
\[
  X \neq \ConsMult{x_{1},\dots,x_{n}}{X}. \label{assumption:1} \tag{\(\star\)}
\]
To prove this formula by \(n\)-step induction it suffices to proceed by induction on \(X\)
 in the formula itself.
For the base case we have to show that \(\Nil \neq \ConsMult{x_{1},\dots,x_{n}}{\Nil}\), which follows readily from \eqref{ax:thlistbase:1}.
For the induction step we assume \eqref{assumption:1}.
For a contradiction assume \[
  \ConsMult{x_{1}',\dots,x_{n}'}{X} = \ConsMult{x_{1},\dots,x_{n},x_{1}',\dots,x_{n}'}{X}.
\]
Then by an \(n\)-fold application of \eqref{ax:thlistbase:2} we obtain \(x_{i}' = x_{i}\) for \(i = 1, \dots, n\) and
\[
  X = \ConsMult{x_{1}',\dots,x_{n}'}{X}.
\]
This contradicts the induction hypothesis and thus completes the induction step.
Interestingly, however, the acyclicity formula \eqref{assumption:1} also has a slightly less natural proof using one-step quantifier-free induction.
\begin{lemma}
  \label{lem:11}
  The theory \(\TheoryListBase + \ListInductionSchema{\Open(\LanguageListBase)}\) proves
  \begin{enumerate}[label=(\roman*)]
  \item \(X \neq \ConsMult{x_{1},\dots,x_{n}}{X}\) for \(n \geq 1\);
  \item \(X = \Nil \vee \Exists{x'}{\Exists{X'}{X = \Cons{x'}{X'}}}\).
  \end{enumerate}  
\end{lemma}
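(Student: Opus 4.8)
The plan is to establish each item by a single application of quantifier-free one-step induction (\cref{def:induction_schema_list}), the only subtlety being the choice of induction formula.

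For (ii) I would argue by contradiction, since the statement itself contains existential quantifiers and is therefore not an admissible induction formula. Suppose some list \(Z\) satisfies \(Z \neq \Nil\) and \(Z \neq \Cons{x'}{X'}\) for all \(x', X'\), and consider the quantifier-free formula \(\varphi(X, Z) \coloneqq X \neq Z\), with \(Z\) treated as a parameter. The first assumption gives \(\varphi(\Nil, Z)\), and since \(Z\) is assumed not to be a value of \(\ConsSymbol\), we have \(\Cons{x}{X} \neq Z\) unconditionally, so the step \(\varphi(X, Z) \rightarrow \varphi(\Cons{x}{X}, Z)\) holds as well. The induction axiom \(\ListInductionAxiom{X}{\varphi}\) then yields \(\forall X\,(X \neq Z)\), and instantiating \(X \coloneqq Z\) gives the contradiction \(Z \neq Z\). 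Hence no such \(Z\) exists, which is exactly (ii). Note that this argument uses only the induction schema and not the axioms of \(\TheoryListBase\).

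For (i), fix \(n \geq 1\). The naive approach of inducting on \(X\) in \(\varphi(X, x_1, \ldots, x_n) \coloneqq X \neq \ConsMult{x_1, \ldots, x_n}{X}\) fails, and isolating the reason is the crux of the argument: in the step one cancels the leading \(\ConsSymbol\) using \eqref{ax:thlistbase:2}, which forces \(x_1\) to equal the freshly added head \(a\) and leaves \(X = \ConsMult{x_2, \ldots, x_n, a}{X}\); the parameter tuple has been \emph{cyclically rotated}, so the induction hypothesis for \((x_1, \ldots, x_n)\) no longer applies. The fix is to induct on the rotation-closed conjunction
\[
  \Phi(X, x_1, \ldots, x_n) \coloneqq \bigwedge_{j = 0}^{n - 1} \big( X \neq \ConsMult{x_{j+1}, \ldots, x_n, x_1, \ldots, x_j}{X} \big),
\]
which is again quantifier-free. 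In the base case each conjunct reduces to \(\Nil \neq \ConsMult{\ldots}{\Nil}\), which holds by \eqref{ax:thlistbase:1} since \(n \geq 1\) makes the right-hand side a \(\ConsSymbol\)-term. In the step I would assume for contradiction that the \(j\)-th conjunct of \(\Phi(\Cons{a}{X}, \vec{x})\) fails; cancelling the leading \(\ConsSymbol\) as above produces an equation \(X = \ConsMult{\ldots}{X}\) that is precisely the conjunct of the induction hypothesis indexed by \((j+1) \bmod n\), contradicting it. Hence every conjunct of \(\Phi(\Cons{a}{X}, \vec{x})\) holds, and \(\ListInductionAxiom{X}{\Phi}\) gives \(\forall X\,\Phi(X, \vec{x})\), whose \(j = 0\) conjunct is (i).

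I expect the main obstacle to be exactly this rotation phenomenon in (i): recognizing that the natural induction formula is not preserved by the step, and that closing it under cyclic rotation of the parameters restores a quantifier-free invariant, with the shift \(j \mapsto (j+1) \bmod n\) being a bijection so that each conjunct of the goal is discharged against a distinct conjunct of the hypothesis.
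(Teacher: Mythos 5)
Your proof is correct. For (ii) it coincides with the paper's own argument: the paper inducts on \(Y\) in the formula \(X \neq Y\) with \(X\) as parameter and derives the contradiction \(X \neq X\); you induct on \(X\) in \(X \neq Z\) with \(Z\) as parameter. This is the same reductio, and your side remark is also accurate: neither version uses the axioms of \(\TheoryListBase\).

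For (i) you take a genuinely different route. The paper argues by a global reductio: it assumes \(X = \ConsMult{x_{1},\dots,x_{n}}{X}\) outright and then inducts on a \emph{fresh} variable \(X'\) in the conjunction \(\bigwedge_{i=1}^{n}\psi_{i}(X')\), whose conjuncts state that \(X'\) differs from the successive tails \(X\), \(\Cons{x_{n}}{X}\), \dots, \(\ConsMult{x_{2},\dots,x_{n}}{X}\) of the assumed-cyclic list; the fixed \(X\) thus occurs as a parameter of the induction formula, and the contradiction is obtained at the end by instantiating \(X' \coloneqq \ConsMult{x_{1},\dots,x_{n}}{X}\) against \(\psi_{1}\). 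You instead induct directly on \(X\) itself, in the conjunction over all cyclic rotations of the element parameters, with no auxiliary variable and no global contradiction hypothesis. Both proofs use \(n\) quantifier-free conjuncts and the same engine---cancelling the leading \(\ConsSymbol\) via \eqref{ax:thlistbase:2} maps each conjunct of the goal to a distinct conjunct of the hypothesis---but they distribute the work differently. Your invariant is positive and makes the rotation phenomenon explicit; the price is a small extra step in the induction step (substituting the head equality \(a = x_{j+1}\) back into the tail equation before it matches the conjunct indexed \((j+1) \bmod n\)), and the payoff is that the conclusion is read off directly as the \(j = 0\) conjunct. In the paper's version the conjuncts are disequations against fixed terms (the tails become parameters thanks to the reductio hypothesis), so the step case needs no such substitution, but the base case \(\psi_{1}(\Nil)\) must invoke the reductio hypothesis together with \eqref{ax:thlistbase:1}, and the whole argument sits inside a proof by contradiction. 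The essential discovery is the same in both: the goal formula is not inductive on its own, because peeling off one constructor shifts the parameters, and it must be closed under that shift.
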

\begin{proof}
  For (\textit{i}) we assume \(X = \ConsMult{x_{1},\dots,x_{n}}{X}\) and proceed by induction on \(X'\) in the formula
  \[
    \underbrace{X' \neq X}_{\psi_{1}(X')} \wedge \underbrace{X' \neq \Cons{x_{n}}{X}}_{\psi_{2}(X')} \wedge \dots \wedge \underbrace{X' \neq \ConsMult{x_{2},\dots,x_{n}}{X}}_{\psi_{n}(X')}.
  \]
  For the base case we have to show \(\psi_{i}(\Nil)\) for \(i = 1, \dots, n\).
  For \(i > 1\), this follows easily from \eqref{ax:thlistbase:1} and for \(i = 0\) we obtain \(X' \neq X\) from the assumption \(X = \ConsMult{x_{1},\dots,x_{n}}{X}\) and \eqref{ax:thlistbase:1}.
  For the induction step we assume \(\bigwedge_{i = 1}^{n}\psi_{i}(X')\).
  Let \(i \in \{ 1, \dots, n\}\).
  If \(i = 1\) assume \(\Cons{x'}{X'} = X\), then by the assumption \(X = \ConsMult{x_{1}, \dots, x_{n}}{X}\) and \(\eqref{ax:thlistbase:2}\) we obtain \(X' = \ConsMult{x_{2},\dots,x_{n}}{X}\) which contradicts the assumption \(\psi_{n}(X')\).
  If \(i > 1\), then assume \(\Cons{x'}{X'} = \ConsMult{x_{n - i + 2},\dots,x_{n}}{X}\), then by \eqref{ax:thlistbase:2} we obtain \(X' = \ConsMult{x_{n - i + 1},\dots,x_{n}}{X}\), which contradicts \(\psi_{i - 1}(X')\).
  Hence, we finally obtain \(\Forall{X'}{\left(\bigwedge_{i = 1}^{n}\psi_{i}(X')\right)}\).
  Thus, in particular, we have \(\bigwedge_{i = 1}\psi_{i}(\ConsMult{x_{1},\dots,x_{n}}{X})\).
  Therefore, we obtain \[\ConsMult{x_{1},\dots,x_{n}}{X} \neq X,\] which contradicts the first assumption.

  For (\textit{ii}) we proceed by induction on \(Y\) in the formula \(X \neq Y\).
  For the base case we have to show \(X \neq \Nil\).
  Assume \(X = \Nil\), then we are done.
  For the induction step, we assume \(X \neq Y\) and \(X = \Cons{y}{Y}\) and we are done.
  Hence we have \(\Forall{Y}{X \neq Y}\).
  Thus in particular \(X \neq X\), which is a contradiction and thus implies the claim.
\end{proof}
This gives rise to the question whether a similar technique as we have used to prove the acyclicity formulas with quantifier-free induction is also possible in general.
It is straightforward to see that we can simulate big-step induction with single-step induction by making use of universal quantifiers and conjunction.
For the sake of completeness we recall the argument.
\begin{lemma}
  \label{lem:12}
  Let \(m \geq 1\) and \(\varphi(X,\vec{z})\) be a formula, then
  \[
    \vdash \ListInductionAxiom{X}{\bigwedge_{i = 1}^{m}\Forall{x_{1}}{\dots\Forall{x_{i-1}}{\varphi(\ConsMult{x_{1},\dots,x_{i-1}}{X},\vec{z})}}} \rightarrow \ListBigStepInductionAxiom{X}{m}{\varphi(X,\vec{z})}.
  \]
\end{lemma}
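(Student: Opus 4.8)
The plan is to take the one-step induction axiom $\ListInductionAxiom{X}{\psi}$ for the abbreviating formula
\[
  \psi(X,\vec z) \coloneqq \bigwedge_{i=1}^{m} \Forall{x_{1}}{\dots\Forall{x_{i-1}}{\varphi(\ConsMult{x_{1},\dots,x_{i-1}}{X},\vec z)}}
\]
as the antecedent, and to derive the consequent $\ListBigStepInductionAxiom{X}{m}{\varphi}$ by discharging its premises. So I would assume the base-case premise
\[
  \bigwedge_{i=1}^{m} \Forall{x_{1},\dots,x_{i-1}}{\varphi(\ConsMult{x_{1},\dots,x_{i-1}}{\Nil},\vec z)}
\]
(which I call (A)) and the step premise
\[
  \Forall{X}{\Forall{x_{1},\dots,x_{m}}{\bigl(\varphi(X,\vec z) \rightarrow \varphi(\ConsMult{x_{1},\dots,x_{m}}{X},\vec z)\bigr)}}
\]
(which I call (B)), and then aim to conclude $\Forall{X}{\varphi(X,\vec z)}$ by applying one-step induction to $\psi$.

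The base case $\psi(\Nil,\vec z)$ of this one-step induction is literally (A), so the whole argument rests on the induction step $\Forall{X}{\Forall{x}{(\psi(X,\vec z) \rightarrow \psi(\Cons{x}{X},\vec z))}}$. Here I would fix $X$ and $x$, assume $\psi(X,\vec z)$, and verify each of the $m$ conjuncts of $\psi(\Cons{x}{X},\vec z)$ in turn. The key syntactic observation is that $\ConsMult{x_{1},\dots,x_{i-1}}{\Cons{x}{X}} = \ConsMult{x_{1},\dots,x_{i-1},x}{X}$, so the $i$-th conjunct of $\psi(\Cons{x}{X},\vec z)$ places $i$ elements in front of $X$, whereas the conjuncts of $\psi(X,\vec z)$ place between $0$ and $m-1$ elements in front of $X$.

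For $i = 1, \dots, m-1$, the $i$-th conjunct of $\psi(\Cons{x}{X},\vec z)$ is obtained by instantiating the $(i+1)$-th conjunct of the hypothesis $\psi(X,\vec z)$, setting its last universally quantified element variable to $x$ and leaving the remaining $i-1$ variables universal; this is a pure quantifier instantiation. The only conjunct not covered by this index shift is the top one, $i = m$, which requires $m$ elements in front of $X$ while $\psi(X,\vec z)$ supplies at most $m-1$. This is precisely where (B) is needed: instantiating (B) at the fixed $X$ with $x_{m} \coloneqq x$ turns the first conjunct $\varphi(X,\vec z)$ of $\psi(X,\vec z)$ into $\varphi(\ConsMult{x_{1},\dots,x_{m-1},x}{X},\vec z)$ for arbitrary $x_{1},\dots,x_{m-1}$, which is exactly the $m$-th conjunct.

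Having established both premises of $\ListInductionAxiom{X}{\psi}$, the axiom yields $\Forall{X}{\psi(X,\vec z)}$, and reading off the first conjunct ($i=1$) of $\psi$ gives the desired $\Forall{X}{\varphi(X,\vec z)}$. I expect the only genuinely delicate point to be the bookkeeping of the index shift---matching the $i$-th conjunct of $\psi(\Cons{x}{X},\vec z)$ to the $(i+1)$-th conjunct of $\psi(X,\vec z)$---together with the observation that exactly the top conjunct escapes this range and must be supplied by the big step (B); everything else is routine quantifier handling.
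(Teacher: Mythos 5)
Your proposal is correct and takes essentially the same approach as the paper's proof: the paper likewise applies one-step induction to exactly your formula \(\psi\) (its formula \((\dagger)\)), handles the conjuncts \(i < m\) by the same index shift to the \((i+1)\)-th conjunct of the induction hypothesis, and invokes the big-step premise only for the top conjunct \(i = m\) via the first conjunct \(\varphi(X,\vec{z})\). There are no gaps; the bookkeeping you flag as the delicate point is handled in the paper in precisely the same way.
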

\begin{proof}
  Assume \(\Forall{x_{1}}{\dots\Forall{x_{i-1}}{\varphi(\ConsMult{x_{1},\dots,x_{i-1}}{\Nil},\vec{z})}}\) for \(i = 1, \dots, m\) and
  \begin{equation*}
    \label{assumption_2}
    \Forall{X}{\Forall{x_{1}}{\dots\Forall{x_{m}}{\left(\varphi(X,\vec{z}) \rightarrow \varphi(\ConsMult{x_{1},\dots,x_{m}}{X},\vec{z})\right)}}}. \tag{\(\star\)}
  \end{equation*}
  Clearly it suffices to show the following formula.
  \begin{equation*}
    \label{induction_formula}
    \bigwedge_{j = 1}^{m}\Forall{x_{1}}{\dots\Forall{x_{j-1}}{\varphi(\ConsMult{x_{1},\dots,x_{j-1}}{X},\vec{z})}}. \tag{\(\dagger\)}
  \end{equation*}
  We proceed by induction on \(X\) in the formula \eqref{induction_formula}.
  The base case follows immediately from the assumptions.
  For the induction step case we assume \eqref{induction_formula}.
  Now let \(i \in \{ 1, \dots, m \}\), let \(x', x_{1}, \dots, x_{i -1}\) be fixed but arbitrary.
  If \(i < m\), then we have to show \(\varphi(\ConsMult{x_{1},\dots,x_{i-1},x'}{X},\vec{z})\), which follows from the induction hypothesis with \(j = i + 1\).
  If \(i = m\), then we have to show \(\varphi(\ConsMult{x_{1},\dots,x_{m-1},x'}{X},\vec{z})\).
  By \eqref{induction_formula} with \(j = 1\), we have \(\varphi(X,\vec{z})\), hence by \eqref{assumption_2} we obtain the desired formula.
\end{proof}
\begin{remark}
  \label{rem:1}
  When the domain of the elements provably consists of a finite number of elements \(n \geq 1\), then the quantifiers over elements in the induction formula of \cref{lem:12} can be replaced by a conjunction.
  Hence, in this situation \((m+1)\)-step induction reduces to \(m\)-step induction without an increase in quantifier-complexity of the induction formulas.
\end{remark}
However, as we will show, the increase of the quantifier complexity when simulating big-step induction with one-step induction is in general unavoidable.
The remainder of the section is devoted to the proof of the following proposition.
\begin{definition}
  \label{def:language_list_base_with_unary_predicate}
  The language \(\LanguageListA\) extends the base language of lists \(\LanguageListBase\) by the predicate symbol \(A : \SortList \to \SortBool\).
\end{definition}
\begin{proposition}
  \label{pro:two_step:main_claim}
  Let \(m \geq 2\), then \[
    \TheoryListBase + \bigcup_{1\leq j < m}\ListBigStepInductionSchema{\Open(\LanguageListA)}{j} \not \vdash \ListBigStepInductionAxiom{x}{m}{A(x)}.
  \]
\end{proposition}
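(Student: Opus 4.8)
The plan is to prove unprovability semantically, by constructing a single \(\LanguageListA\)-structure \(M\) that is a model of \(\TheoryListBase + \bigcup_{1 \le j < m}\ListBigStepInductionSchema{\Open(\LanguageListA)}{j}\) but in which \(\ListBigStepInductionAxiom{x}{m}{A(x)}\) is false. Since all the induction schemas are sets of universally closed sentences, it suffices to arrange a single failure of the \(m\)-step principle for \(A(x)\) while keeping \emph{every} quantifier-free instance of the shorter principles true. The first thing to note is that the counterexample must involve non-standard lists: \(m\)-step induction is valid on the term submodel generated by \(\Nil\) and \(\ConsSymbol\) (the standard finite lists), since every list of \(\ge j\) elements is obtained by prepending a \(j\)-block to a shorter one. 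This is exactly why the preliminaries develop transfinite sequences, and I would realise the non-standard lists as sequences of length \(\ge \omega\) with \(\ConsSymbol\) interpreted as prepending.

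For the domain I would take the finite lists (standard part) together with a family of transfinite sequences (non-standard part), with \(\ConsSymbol\) acting as prepend on both and \(\Nil = \varepsilon\). On this domain I would interpret \(A\) so that it holds on all finite lists and on exactly those non-standard lists whose recorded ``prepend-phase'' is \(\equiv 0 \pmod m\), arranged so that \(A\) is closed under prepending any \(m\) elements but not under prepending \(j < m\) elements. The axioms of \(\TheoryListBase\) then hold because prepending is injective and never produces \(\Nil\); and the \(m\)-step axiom for \(A(x)\) fails because its premises hold — the base cases are finite lists, on which \(A\) is true, and the step holds by closure under \(m\)-prepends — while its conclusion fails, as some non-standard list has phase \(\not\equiv 0 \pmod m\).

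The main obstacle is verifying that \emph{every} quantifier-free instance of \(j\)-step induction holds for each \(j < m\). For this I would classify the quantifier-free \(\LanguageListA\)-definable subsets of \(M\) (with parameters) and show that any such set which contains all lists of fewer than \(j\) elements and is closed under prepending \(j\) elements must already be all of \(M\); the standard part is routine, so the content is on the non-standard part. The genuinely delicate point is to preclude definable subsets of \emph{intermediate period}. A naive ``phase \(\equiv 0 \pmod m\)'' predicate fails here: for composite \(m\) and \(j\) with \(d = \gcd(j,m) > 1\), a disjunction of the form \(\bigvee_{k} A(\ConsMult{\dots}{X})\) would define a proper, prepend-\(j\)-closed set (``phase \(\equiv 0 \pmod d\)'') containing all finite lists, which would make that very instance of \(j\)-step induction false in \(M\). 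The construction must therefore be carried out so that no quantifier-free formula can isolate the phase modulo a proper divisor of \(m\); this is precisely what the transfinite indexing (up to \(\omega^{3}\)) is used for, by entangling the phase with positional content beyond the limit position \(\omega\) rather than exposing a bare cyclic counter. Establishing this — that closure under prepending \(j\) elements forces any quantifier-free definable set containing the short lists to exhaust the model — is the technical heart of the argument, and I expect the bulk of the work to lie in the combinatorial analysis of the quantifier-free types realised in the non-standard part.
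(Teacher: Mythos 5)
You follow the same global strategy as the paper: build one \(\LanguageListA\)-structure whose list sort consists of the finite sequences \(\NaturalNumbers^{*}\) together with transfinite sequences, interpret \(\ConsSymbol\) as prepending, and choose \(A\) to contain all finite lists and to be closed under prepending \(m\) elements but not \(j<m\) elements, so that the premises of \(\ListBigStepInductionAxiom{x}{m}{A(x)}\) hold while its conclusion fails; and you correctly isolate the crux, namely that every quantifier-free instance of \(j\)-step induction must remain true, which rules out any ``bare cyclic counter'' because of definable sets of period \(\gcd(j,m)\). The genuine gap is that the proposal stops exactly there: you never define the ``recorded prepend-phase'', and the one concrete mechanism you offer --- entangling the phase with positional content beyond the limit position \(\omega\), in sequences of length up to \(\omega^{3}\) --- cannot work. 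Since \(n+\gamma=\gamma\) for every \(n<\omega\leq\gamma\), prepending finitely many elements leaves all entries at positions \(\geq\omega\) unchanged, so a phase carried by that region is invariant under \(\ConsSymbol\); then \(A\) would be closed under prepending a \emph{single} element, and already the axiom \(\ListBigStepInductionAxiom{x}{1}{A(x)}\) would be false in your structure (true base case, true step, false conclusion), contradicting the requirement that it model the \(j\)-step schemas for \(1\leq j<m\). (Incidentally, the \(\omega^{3}\)-sequences are what the paper uses in \cref{sec:right_cancellation} for right cancellation; for the present proposition, sequences of length exactly \(\omega\) suffice.)

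The phase must therefore live in the finite positions, and the paper's model shows how to arrange this without making it quantifier-free definable modulo a proper divisor of \(m\). Its non-standard elements are the sequences \(w\frown N_{k}\) with \(w\in\NaturalNumbers^{*}\) and \(N_{k}=(k,k+1,k+2,\dots)\), and \(A\) is false exactly on the elements of the form \(N_{k}\) with \(m\mid k\), true on everything else. Because the tails are strictly increasing, prepending a fixed tuple \(\vec{c}\) to \(N_{k}\) yields again a sequence of the form \(N_{k'}\) only when \(\vec{c}\) is the exact descending run \((k-r,\dots,k-1)\); any other tuple produces a non-trivial prefix, after which \(A\) remains true under all further prepending. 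Hence for each term \(\vec{c}\frown X\) with \(\vec{c}\neq\varepsilon\) the atom \(A(\vec{c}\frown X)\) is false at no more than one point of the spine \(\{N_{k}\mid k\in\NaturalNumbers\}\), and equations stabilize; so on the spine every quantifier-free formula with parameters eventually agrees with a Boolean combination of the single atom \(A(X)\), whose truth pattern admits only the trivial period and the period \(m\) --- your \(\gcd\)-sets are simply not definable. The verification then concludes the way you intend: pick \(j\) consecutive suffixes of the given non-standard element landing on spine points \(N_{K},\dots,N_{K+j-1}\) with \(m\nmid K+i\) for \(0\leq i<j\), where the induction formula holds because it holds on long finite lists, and march back down to the element itself using the \(j\)-step hypothesis. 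Without this construction (or an equivalent one), the proposal is a correct plan rather than a proof.
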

This proposition entails, in particular, that \( \TheoryListBase + \ListInductionSchema{\Open(\LanguageListA)} \not \vdash \TwoStepInductionSchema{\Open(\LanguageListA)} \).
\newcommand{\StructureListOneStep}[1]{\FOStructure_{1}^{#1}}
We will show the above claim by constructing a model of quantifier-free induction over the language \(\LanguageListA\) in which the predicate \(A\) does not satisfy two-step induction.
In such a model we call an element a standard element if it can be expressed as a term of the form \(\Cons{x_{1}}{\dots\Cons{x_{n}}{\Nil}}\) under a suitable variable assignment.
All other elements are called the non-standard elements.
By \cref{lem:11}.(\textit{ii}) a non-standard element can be decomposed any finite number of times and thus resemble transfinite sequences of length at least \(\omega\).
The model constructed in \cref{def:two_step:non_standard_model} will use transfinite sequences of length up to \(\omega\) as the non-standard elements.
Since for example the transfinite sequence \(v^{\omega}\) with \(v \in \NaturalNumbers^{*}\) satisfies \(v^{\omega} = v \frown v^{\omega}\) it violates the acyclicity property \(X \neq \Cons{x_{1},\dots,x_{|v|}}{X}\) (see \cref{lem:11}).
Hence we have to avoid sequences that absorb a finite prefix.

The following definition introduces the non-standard elements that we use for the model
 constructed in this section.
\begin{definition}
  \label{def:two_step:non_standard_elements}
  Let \(k \in \NaturalNumbers\), then by \(N_{k}\) we denote the sequence \((i)_{k \leq i < \omega}\). Now we define
  \[
    \mathcal{N} \coloneqq \{ w \frown N_{k} \mid w \in \NaturalNumbers^{*}, k \in \NaturalNumbers\}.
  \]
  Let \(N \in \mathcal{N}\), then there is a unique decomposition \(N = w \frown N_{k}\) such that \(|w|\) and \(k\) are minimal.
  We write \(w_{N}\) for this \(w\) and \(k_{N}\) for this \(k\).
  We call \(w_{N}\) the main prefix of \(N\) and \(N_{k_{N}}\) the main suffix of \(N\).
\end{definition}
We can now define a structure whose domain consists of the finite sequences of natural numbers and the non-standard elements defined above.
\newcommand{\divides}[2]{#1 \mid #2}
\begin{definition}
  \label{def:two_step:non_standard_model}
  Let \(m \geq 1\), then the structure \(\StructureListOneStep{m}\) interprets the sort \(\SortIndividuals\) as the natural numbers and the sort \(\SortList\) as
  \( \StructureListOneStep{m}(\SortList) = \NaturalNumbers^{*} \cup \mathcal{N} \).
  Furthermore, \(\StructureListOneStep{m}\) interprets the non-logical symbols as follows
  \begin{gather*}
    \Nil^{\StructureListOneStep{m}} \coloneqq \varepsilon,
    \\
    \ConsSymbol^{\StructureListOneStep{m}}(n,l) \coloneqq (n) \frown l,
    \\
    A^{\StructureListOneStep{m}} \coloneqq \NaturalNumbers^{*} \cup \left\{ N \in \mathcal{N} \mid \text{\(w_{N} \neq \varepsilon\) or \(\notdivides{m}{k_{N}}\)}\right\}.
  \end{gather*}
\end{definition}
We say that an element \(l_{1}\) is a predecessor of an element \(l_{2}\) if there are \(k,n_{1},\dots,n_{k} \in \NaturalNumbers\) such that \(\StructureListOneStep{m} \models l_{2} = \ConsMult{n_{1},\dots,n_{k}}{l_{1}}\).

We start by observing that the structure defined above satisfies the basic axioms of the constructors \(\Nil\) and \(\ConsSymbol\) of finite sequences.
\begin{lemma}
  \label{lem:two_step:structure_satisfies_base_theory}
  Let \(m \geq 1\), then
  \(\StructureListOneStep{m} \models \TheoryListBase\).
\end{lemma}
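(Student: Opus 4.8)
The plan is to verify the two axioms \eqref{ax:thlistbase:1} and \eqref{ax:thlistbase:2} of \(\TheoryListBase\) directly against the interpretations fixed in \cref{def:two_step:non_standard_model}. Since neither axiom mentions the predicate \(A\), and the interpretations of \(\Nil\) and \(\ConsSymbol\) do not depend on \(m\), the argument will be uniform in \(m\). Before touching the axioms I would record that the domain \(\NaturalNumbers^{*} \cup \mathcal{N}\) is closed under \(\ConsSymbol^{\StructureListOneStep{m}}\), so that the interpretation is a genuine function into the domain: for \(n \in \NaturalNumbers\) and \(l \in \NaturalNumbers^{*}\) the sequence \((n) \frown l\) is again a finite sequence, hence lies in \(\NaturalNumbers^{*}\); and for \(l = w \frown N_{k} \in \mathcal{N}\) we have \((n) \frown l = ((n) \frown w) \frown N_{k}\) by associativity (\cref{lem:ordinal_indexed_sequence_concatenation_has_left_cancellation}.(\textit{i})), where \((n) \frown w \in \NaturalNumbers^{*}\), so the result again lies in \(\mathcal{N}\).

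For \eqref{ax:thlistbase:1} I would argue by lengths. For any \(n\) and any \(l\) in the domain, \(\ConsSymbol^{\StructureListOneStep{m}}(n,l) = (n) \frown l\) has length \(1 + |l| \geq 1 > 0 = |\varepsilon| = |\Nil^{\StructureListOneStep{m}}|\), so the two sequences are distinct. Hence \(\StructureListOneStep{m} \models \Nil \neq \Cons{x}{X}\) under every assignment.

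For \eqref{ax:thlistbase:2}, suppose \((n_{1}) \frown l_{1} = (n_{2}) \frown l_{2}\). Evaluating both sides at index \(0\) yields \(n_{1} = n_{2}\); since the two prefixes then coincide, left cancellation of \(\frown\) (\cref{lem:ordinal_indexed_sequence_concatenation_has_left_cancellation}.(\textit{ii})) gives \(l_{1} = l_{2}\). This establishes the injectivity axiom and completes the verification.

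I do not expect any genuine obstacle here: the only points requiring care are the closure of the domain under \(\ConsSymbol^{\StructureListOneStep{m}}\) and the fact that one must invoke \emph{left} cancellation rather than right cancellation of \(\frown\). The latter fails in general, as noted immediately after \cref{lem:ordinal_indexed_sequence_concatenation_has_left_cancellation}, but this is harmless since the distinguishing one-element prefix \((n_{i})\) sits on the left, exactly where left cancellation applies.
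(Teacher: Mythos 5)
Your proof is correct and follows essentially the same route as the paper: a direct verification of \eqref{ax:thlistbase:1} (the paper observes the cons of anything is a non-empty set of pairs, while you compare lengths, which is equivalent) and of \eqref{ax:thlistbase:2} via first-component equality followed by left cancellation from \cref{lem:ordinal_indexed_sequence_concatenation_has_left_cancellation}. Your additional check that the domain is closed under \(\ConsSymbol^{\StructureListOneStep{m}}\) is a reasonable extra precaution that the paper leaves implicit.
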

\begin{proof}
  We start with the axiom \eqref{ax:thlistbase:1}.
  We have \(\Nil^{\StructureListOneStep{m}} = \varepsilon = \varnothing\).
  Let \(n \in \NaturalNumbers\) and \(l \in \StructureListOneStep{m}(\SortList)\), then \((0,n) \in (n) \frown l = \ConsSymbol^{\StructureListOneStep{m}}(n,l)\).
  Hence, \(\Nil^{\StructureListOneStep{m}} \neq \ConsSymbol^{\StructureListOneStep{m}}(n,l)\) and therefore \(\StructureListOneStep{m} \models \eqref{ax:thlistbase:1}\).
  Now let \(n_{1}, n_{2} \in \NaturalNumbers\) and \(l_{1}, l_{2} \in \StructureListOneStep{m}(\SortList)\) and assume that \(\ConsSymbol^{\StructureListOneStep{m}}(n_{1},l_{1}) = (n_{1}) \frown l_{1} = (n_{2}) \frown l_{2}\).
  Clearly, \(n_{1} = n_{2}\), hence by \cref{lem:ordinal_indexed_sequence_concatenation_has_left_cancellation} we immediately obtain \(\StructureListOneStep{m} \models \eqref{ax:thlistbase:2}\).
\end{proof}
The following lemma shows that unary \(A\) predicates of \(\StructureListOneStep{m}\), eventually periodically become true on predecessors of non-standard elements.
\begin{lemma}
  \label{lem:A_atoms_eventually_true_on_even_main_suffixes}
  Let \(m \geq 1\), \(t(X)\) be a \(\LanguageListA(\StructureListOneStep{m})\) term, then there is a \(K \in \NaturalNumbers\) such that for all \(k \in \NaturalNumbers\) with \(k \geq K\) and \(\notdivides{m}{k}\),
  \(
    \StructureListOneStep{m} \models A(t(N_{k})).
  \)
\end{lemma}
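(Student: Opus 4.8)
The plan is to put $t$ into a normal form along its $\ConsSymbol$-spine and then read off the claim by a short case analysis, the only genuinely delicate case being the one in which the variable $X$ sits at the base of the spine.

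First I would observe that every $\LanguageListA(\StructureListOneStep{m})$ term of sort $\SortList$ whose only variable is $X$ can be written as $\ConsMult{n_{1},\dots,n_{r}}{b}$ for some $r \geq 0$, fixed natural numbers $n_{1},\dots,n_{r}$ interpreting the $\SortIndividuals$-constants in the element slots, and a base $b$ that is not a $\ConsSymbol$-application, i.e. $b$ is $\Nil$, the variable $X$, or a constant $c_{l}$ naming some $l \in \StructureListOneStep{m}(\SortList)$. This is immediate by induction on $t$, since a $\SortList$-term has a single spine ending in a unique non-$\ConsSymbol$ base and the $\SortIndividuals$-arguments carry no list material. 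Evaluating at $N_{k}$ then gives $t(N_{k}) = (n_{1},\dots,n_{r}) \frown s$, where $s$ is $\varepsilon$, $N_{k}$, or the fixed element $l$, according to the three possibilities for $b$.

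If $b = \Nil$ then $t(N_{k}) = (n_{1},\dots,n_{r}) \in \NaturalNumbers^{*} \subseteq A^{\StructureListOneStep{m}}$, and if $b = c_{l}$ then $t(N_{k})$ is a fixed element independent of $k$; in both cases the truth value of $A(t(N_{k}))$ does not vary with $k$, so the substance of the lemma lies entirely in the case $b = X$. Here $t(N_{k}) = (n_{1},\dots,n_{r}) \frown N_{k}$. If $r = 0$ then $t(N_{k}) = N_{k}$, whose minimal decomposition from Definition~\ref{def:two_step:non_standard_elements} is $w_{N_{k}} = \varepsilon$ and $k_{N_{k}} = k$; thus $\StructureListOneStep{m} \models A(N_{k})$ exactly when $\notdivides{m}{k}$, which is precisely the hypothesis, and $K = 0$ suffices. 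This is the one place where the restriction $\notdivides{m}{k}$ is actually used.

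For $r \geq 1$ I would show that $K = n_{r} + 2$ works, and this is the main obstacle: it requires computing the \emph{minimal} prefix/suffix decomposition of the sequence $(n_{1},\dots,n_{r},k,k+1,k+2,\dots)$. The key sublemma is that a tail of this sequence starting inside the block $(n_{1},\dots,n_{r})$ can be of the form $N_{k'}$ only if the block read from that point is a step-one arithmetic progression meeting $k,k+1,\dots$, which forces $n_{r}+1 = k$. Hence for every $k > n_{r}+1$ the longest $N$-shaped tail is $N_{k}$ itself, so the main prefix equals the nonempty sequence $(n_{1},\dots,n_{r})$ and membership in $A^{\StructureListOneStep{m}}$ follows from the first disjunct ($w_{N} \neq \varepsilon$) of its definition, uniformly in $k$ and regardless of divisibility. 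Since this already yields $\StructureListOneStep{m} \models A(t(N_{k}))$ for all $k \geq K$, a fortiori for those with $\notdivides{m}{k}$, the proof is complete.
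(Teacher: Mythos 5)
Your proof takes essentially the same route as the paper's, and on the case that carries all the content it is correct and in fact more explicit: the paper simply asserts that every term satisfies \(t(l) = w \frown l\) for a fixed \(w \in \NaturalNumbers^{*}\) and takes \(K = (w)_{|w|-1} + 2\), which is exactly your spine normal form with base \(X\) and your \(K = n_{r} + 2\), justified by the same minimal-decomposition computation (a tail of the form \(N_{k'}\) starting inside the block would force \(n_{r} = k - 1\), which \(k \geq n_{r}+2\) excludes). Your \(r = 0\) case is the paper's \(w = \varepsilon\) case, the only place where the hypothesis \(m \nmid k\) is used.

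The one point to flag is the case \(b = c_{l}\) that you wave through. There you argue that the truth value of \(A(t(N_{k}))\) is constant in \(k\) and conclude that the case is immaterial; but the lemma asserts that this truth value is eventually \emph{true}, and constancy does not give truth. In fact, for \(m \geq 2\) the term \(t(X) = c_{N_{0}}\), in which \(X\) does not occur, is a counterexample to the lemma as literally stated: \(A(t(N_{k}))\) is \(A(N_{0})\), which is false in \(M_{1}^{m}\) because \(w_{N_{0}} = \varepsilon\) and \(m \mid 0\). So this is not a gap you could have closed; it is a defect of the statement itself, and the paper's own proof glosses over it even more bluntly, since its opening claim that \(t(l) = w \frown l\) for some \(w \in \NaturalNumbers^{*}\) is false for any term in which \(X\) does not sit at the base of the \(\ConsSymbol\)-spine. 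The statement should either carry the hypothesis that \(X\) occurs in \(t\), or have its conclusion weakened to say that the truth value of \(A(t(N_{k}))\) stabilizes; either repair suffices for the application in \cref{lem:two_step:structure_satisfies_open_induction}, because atoms not containing \(X\) keep a fixed truth value under substitution anyway. Under that reading, your argument is complete and coincides with the paper's.
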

\begin{proof}
  There clearly is a \(w \in \NaturalNumbers^{*}\) such that \(\StructureListOneStep{m} \models t(l) = w \frown l\) for all \(l \in \StructureListOneStep{m}(\SortList)\).
  If \(w = \varepsilon\), then we are done by letting \(K = 0\).
  Otherwise, we let \(K = (w)_{|w| - 1} + 2\).
  For \(k \geq K\), the sequence \(N_{k}\) is the main suffix of the sequence \(w \frown N_{k}\) and the main prefix of \(w \frown N_{k}\) is not empty. Thus \(\StructureListOneStep{m} \models A(t(N_{k}))\).
\end{proof}
Informally, the following lemma states that unary equational predicates over elements of \(\StructureListOneStep{m}\) eventually stabilize.
\begin{lemma}
  \label{lem:list_equations_stabilize}
  Let \(m \geq 1\) and \(E(X)\) an \(\LanguageListA(\StructureListOneStep{m})\) equation. If \(\StructureListOneStep{m} \not \models E(X)\) then there exists \(K \in \NaturalNumbers\) such that firstly \(\StructureListOneStep{m} \not \models E(w)\) for all \(w \in \NaturalNumbers^{*}\) with \(|w| \geq K\) and secondly \(\StructureListOneStep{m} \not \models E(N_{k})\) for all \(k \geq K\).
\end{lemma}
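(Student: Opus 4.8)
The plan is to reduce the statement to a short case analysis driven by a structural description of the \(\SortList\)-terms of \(\LanguageListA(\StructureListOneStep{m})\) that can occur in \(E(X)\). Since \(X\) is the only free variable and the (extended) language contains no function symbol returning an individual from a list, every \(\SortIndividuals\)-term occurring in \(E\) is closed, i.e. a constant \(c_{n}\); hence if \(E\) is an equation between individual terms it does not depend on \(X\), and the hypothesis \(\StructureListOneStep{m} \not\models E(X)\) forces it to be false for every value of \(X\), so any \(K\) works. It therefore suffices to treat \(E(X)\) of the form \(s(X)=t(X)\) with \(s,t\) of sort \(\SortList\). First I would establish, by induction on term structure exactly as in the opening of the proof of \Cref{lem:A_atoms_eventually_true_on_even_main_suffixes}, that each such term falls into exactly one of two types: either (a) it does not contain \(X\), and then \(t^{\StructureListOneStep{m}}(l)=l_{t}\) for a fixed \(l_{t}\in\StructureListOneStep{m}(\SortList)\) independent of \(l\); or (b) it contains \(X\) (necessarily once, as the terminal leaf of its \(\ConsSymbol\)-spine), and then there is a finite word \(v_{t}\in\NaturalNumbers^{*}\) with \(t^{\StructureListOneStep{m}}(l)=v_{t}\frown l\) for all \(l\). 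The base cases are \(\Nil\mapsto\varepsilon\) and \(c_{l}\mapsto l\) (type (a)) and \(X\) (type (b) with \(v=\varepsilon\)), and each \(\ConsSymbol\)-step merely prepends one element to \(l_{t}\) or to \(v_{t}\).

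With this dichotomy I would split into the three combinations for \((s,t)\). If both are of type (a), then \(E(X)\) is the \(X\)-independent equation \(l_{s}=l_{t}\); the hypothesis gives \(l_{s}\neq l_{t}\), so \(E\) fails for every argument and any \(K\) suffices. If both are of type (b), then \(E(X)\) reads \(v_{s}\frown X = v_{t}\frown X\); were \(v_{s}=v_{t}\) the equation would hold identically, contradicting the hypothesis, so \(v_{s}\neq v_{t}\), and a direct positionwise comparison shows \(v_{s}\frown l\neq v_{t}\frown l\) for every \(l\): for finite \(l=w\) the two sides differ in length when \(|v_{s}|\neq|v_{t}|\) and otherwise at a position below \(|v_{s}|\), while for \(l=N_{k}\) they disagree at every position beyond \(\max(|v_{s}|,|v_{t}|)\) when \(|v_{s}|\neq|v_{t}|\) and at an early position otherwise. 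Hence \(E\) again fails everywhere.

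The only case carrying genuine content — and the one I expect to be the main obstacle — is the mixed one, say \(s\) of type (b) with word \(v\) and \(t\) of type (a) with value \(l_{t}\), so that \(E(X)\) is \(v\frown X = l_{t}\). Here I would distinguish on \(l_{t}\). If \(l_{t}\in\NaturalNumbers^{*}\), then \(v\frown w\) is finite while \(v\frown N_{k}\) is infinite, so the equation can hold only for finite arguments and only when \(|v|+|w|=|l_{t}|\); taking \(K=|l_{t}|+1\) makes \(E\) fail for every \(w\) with \(|w|\geq K\) and for every \(N_{k}\). If \(l_{t}\in\mathcal{N}\), then \(v\frown w\) is finite and hence never equals the infinite \(l_{t}\), whereas the value of \(v\frown N_{k}\) at position \(|v|\) is exactly \(k\) by \Cref{def:two_step:non_standard_elements}; since \(l_{t}\) has a fixed value \((l_{t})_{|v|}\) at that position, equality can hold for at most one \(k\), and choosing \(K=(l_{t})_{|v|}+1\) yields the threshold for both the finite and the \(N_{k}\) parts. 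The delicate point throughout is precisely this injectivity of \(k\mapsto v\frown N_{k}\) at the cut position \(|v|\) and the careful bookkeeping distinguishing the finite behaviour of \(v\frown w\) from the strictly increasing tail of \(v\frown N_{k}\); everything else is routine.
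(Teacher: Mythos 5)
Your proof is correct and takes essentially the same approach as the paper's: the same trichotomy according to which sides of the equation contain \(X\), the same normal form for terms (those containing \(X\) evaluate to \(v \frown l\) for a fixed finite word \(v\), those without to constants), and essentially the same thresholds \(K\). The only difference is cosmetic: in the mixed case you compare lengths and the entry at position \(|v|\) directly, whereas the paper first runs a prefix analysis and uses left cancellation to reduce the equation to \(l = l''\) before splitting on whether \(l''\) is standard; both routes produce the same bound.
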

\begin{proof}
  The case where \(X \notin \Var(E)\) is trivial.
  Let \(E(X)\) be \(u(X) = v(X)\).
  If \(X \in \Var(u)\) and \(\Var(v) = \varnothing\), then there is \(w \in \NaturalNumbers^{*}\) and \(l' \in \StructureListOneStep{m}(\SortList)\) such that \(\StructureListOneStep{m} \models u(l) = w \frown l\) for all \(l \in \StructureListOneStep{m}(\SortList)\) and \(\StructureListOneStep{m} \models v = l'\).
  If \(|w| > |l'|\), then we are done by letting \(K = 0\).
  Otherwise, if \(|w| \leq |l'|\) we consider the prefix of \(l'\).
  If \(w\) is not a prefix of \(l'\), then again we are done by letting \(K = 0\).
  If \(w\) is the prefix of \(l'\), then \(l' = w \frown l^{\prime\prime}\) for some \(l^{\prime\prime} \in \NaturalNumbers^{\leq |l'|}\).
  We have \(l^{\prime\prime} \in \StructureListOneStep{m}(\SortList)\), since \(\StructureListOneStep{m}(\SortList)\) is closed under predecessors.
  Thus \(\StructureListOneStep{m} \models E(l)\) if and only if \(l = l^{\prime\prime}\).
  If \(l^{\prime\prime}\) is a standard element, then \(\StructureListOneStep{m} \not \models E(l)\) for all \(l \in \StructureListOneStep{m}(\SortList)\) with \(|l| > |l^{\prime\prime}|\).
  Hence, we let \(K = |l^{\prime\prime}| + 1\).
  Otherwise, if \(l^{\prime\prime}\) is non-standard, then we readily have \(\StructureListOneStep{m} \not \models E(l)\) for all \(l \in \NaturalNumbers^{*}\).
  Furthermore, we have \(\StructureListOneStep{m} \not \models E(l)\) for all non-standard \(l \in \StructureListOneStep{m}(\SortList)\) with \((l)_{0} \neq (l^{\prime\prime})_{0}\).
  Hence, it suffices to let \(K = (l^{\prime\prime})_{0} + 1\).

  Now let us consider the case where \(\Var(u) \cap \Var(v) = \{ X \}\).
  There exist \(w, w' \in \NaturalNumbers^{*}\) such that for all \(l \in \StructureListOneStep{m}(\SortList)\), \(\StructureListOneStep{m} \models u(l) = w \frown l\) and \(\StructureListOneStep{m} \models v(l) = w' \frown l\).
  Moreover, by the assumption that \(\StructureListOneStep{m} \not \models E(X)\) we have \(w \neq w'\).
  Hence, \(\StructureListOneStep{m} \not \models E(l)\) for all \(l \in \StructureListOneStep{m}(\SortList)\).
  Thus, we let \(K = 0\).
\end{proof}
We are now ready to show that the structure \(\StructureListOneStep{m}\) satisfies quantifier-free \(j\)-step induction for \(1 \leq j < m\) over the language consisting of the list constructors \(\Nil\), \(\ConsSymbol\), and the predicate symbol \(A\).
\begin{lemma}
  \label{lem:two_step:structure_satisfies_open_induction}
  Let \(m \geq 2\), then \(\StructureListOneStep{m} \models \bigcup_{1 \leq j < m }\ListBigStepInductionSchema{\Open(\LanguageListA)}{j}\).
\end{lemma}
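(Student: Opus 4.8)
The plan is to fix $j$ with $1 \le j < m$, a quantifier-free formula $\varphi(X,\vec{z})$ over $\LanguageListA$, and values $\vec{d}$ for the parameters $\vec{z}$, and then to verify the single instance $\ListBigStepInductionAxiom{X}{j}{\varphi}$ under the assignment $\vec{z} \mapsto \vec{d}$; writing $\varphi(X)$ for $\varphi(X,\vec{d})$, I would assume the two conjuncts of the antecedent, namely (a) that $\varphi$ holds on every list of length $< j$, and (b) the step $\Forall{X}{\Forall{x_1,\dots,x_j}{(\varphi(X) \rightarrow \varphi(\ConsMult{x_1,\dots,x_j}{X}))}}$, and derive $\Forall{X}{\varphi(X)}$. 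The recurring tool throughout is that iterating (b) shows that prepending any finite word whose length is divisible by $j$ preserves $\varphi$.

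First I would treat the standard elements. Given $l \in \NaturalNumbers^{*}$ of length $n = js + \rho$ with $0 \le \rho < j$, I would write $l = w \frown b$, where $b$ is the suffix of $l$ of length $\rho$ and $|w| = js$. Then $\varphi(b)$ holds by (a), and $s$-fold use of (b) yields $\varphi(l)$, so $\varphi$ holds on all of $\NaturalNumbers^{*}$.

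The heart of the argument is the non-standard part. I would first show $\StructureListOneStep{m} \models \varphi(N_{k})$ for all sufficiently large $k$ with $\notdivides{m}{k}$, by comparing the truth values of the finitely many atoms of $\varphi$ on $N_{k}$ with their values on a fixed long finite word $w \in \NaturalNumbers^{*}$. By \cref{lem:A_atoms_eventually_true_on_even_main_suffixes} every atom $A(t(X))$ is true on $N_{k}$ (for large $k$ with $\notdivides{m}{k}$), and it is trivially true on the finite word $w$; by \cref{lem:list_equations_stabilize} every equation of $\varphi$ is either true on all elements or false on both $N_{k}$ and $w$ once $k$ and $|w|$ exceed a common bound. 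Hence all atoms agree on $N_{k}$ and $w$, so $\varphi(N_{k}) = \varphi(w)$, and the latter is true by the standard case. Next, since $N_{k} = \ConsMult{k,\dots,k+j-1}{N_{k+j}}$, the step (b) gives $\varphi(N_{k+j}) \rightarrow \varphi(N_{k})$; because $1 \le j < m$ entails $\notdivides{m}{j}$, for every $k_{0}$ there are arbitrarily large $t$ with $\notdivides{m}{(k_{0} + jt)}$, so from the already-established $\varphi(N_{k_{0} + jt})$ and $t$ downward steps I obtain $\varphi(N_{k_{0}})$ for every $k_{0}$. Finally, an arbitrary non-standard $N = w_{N} \frown N_{k_{N}}$ can be rewritten as $w' \frown N_{k_{N} + r}$ with $|w'| = |w_{N}| + r$ made divisible by $j$ for the suitable $r < j$, whence $\varphi(N)$ follows from $\varphi(N_{k_{N} + r})$ by iterating (b).

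The main obstacle is precisely this non-standard part: pinning down the eventual truth value of $\varphi$ along the suffixes $N_{k}$ and then transporting it across all residues modulo $m$. This is exactly where the hypothesis $j < m$ (equivalently $\notdivides{m}{j}$) is indispensable, since it lets the step relation $\varphi(N_{k+j}) \rightarrow \varphi(N_{k})$ mix the residue classes of $k$ modulo $m$ on which the atom $A(X)$ changes truth value; for $j = m$ this mixing fails, which is precisely why $m$-step induction for $A(x)$ is refuted by this same model.
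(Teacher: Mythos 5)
Your proof is correct, and its skeleton matches the paper's: handle standard elements by chunked use of the base and step hypotheses, then use the two stabilization lemmas (\cref{lem:A_atoms_eventually_true_on_even_main_suffixes,lem:list_equations_stabilize}) to make every atom of \(\varphi\) agree on a far suffix \(N_{k}\) (with \(m \nmid k\)) and on a long standard word, which gives \(\StructureListOneStep{m} \models \varphi(N_{k})\) for all sufficiently large \(k\) with \(m \nmid k\). Where you genuinely diverge is the propagation step. The paper chooses the decomposition \(l = w \frown N_{K''}\) so that \(\divides{m}{K''-1}\); since \(j < m\), the \(j\) consecutive indices \(K'', \dots, K''+j-1\) then all avoid multiples of \(m\), so \(\varphi\) holds on those \(j\) consecutive suffixes, and a single forward induction (prepending \(j\) elements at a time, starting from the suffix whose offset matches \(|w| \bmod j\)) reaches \(l\). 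You instead read the step hypothesis downward along the chain of suffixes --- from \(N_{k} = \ConsMult{k,\dots,k+j-1}{N_{k+j}}\) you get \(\varphi(N_{k+j}) \rightarrow \varphi(N_{k})\) --- and use that the progression \(k_{0} + j\NaturalNumbers\) contains arbitrarily large indices not divisible by \(m\) (which only needs \(\notdivides{m}{j}\)) to conclude \(\varphi(N_{k})\) for \emph{every} \(k\); after that, the forward induction requires no residue bookkeeping beyond choosing \(r < j\) with \(j \mid |w_{N}| + r\). The two proofs thus exploit \(j < m\) differently: the paper needs \(j\) consecutive integers avoiding multiples of \(m\), you only need \(m \nmid j\). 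Your variant is accordingly a bit more general --- it shows \(\StructureListOneStep{m}\) satisfies quantifier-free \(j\)-step induction for every \(j\) not divisible by \(m\), not merely \(j < m\) --- and the stronger intermediate claim makes the final step cleaner, at the modest cost of the extra ``arbitrarily large \(t\)'' argument.
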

\begin{proof}
  Let \(j \in \NaturalNumbers\) with \(1 \leq j < m\) and \(\Formula(X)\) be a quantifier-free \(\LanguageListA(\StructureListOneStep{m})\) formula.
  Assume that
  \begin{gather}
    \StructureListOneStep{m} \models \Formula(\ConsMult{x_{1}, \dots, x_{i-1}}{\Nil}),
    \tag{\(\ast\)} \label{assumption:IB}
  \end{gather}
  for \(i = 1, \dots, j\) and
  \begin{gather}
    \StructureListOneStep{m} \models \Formula(X) \rightarrow \Formula(\ConsMult{x_{1},\dots,x_{j}}{X}),
    \tag{\(\star\)} \label{assumption:IS}
  \end{gather}
  Let \(l \in \StructureListOneStep{m}(\SortList)\).
  We have to show that \(\StructureListOneStep{m} \models \Formula(l)\).
  If \(l\) is standard, then we are done by a straightforward induction on \(|l|\) making use of \eqref{assumption:IB} and \eqref{assumption:IS}.
  
  Now let us consider the case where \(l\) is non-standard, that is, \(l \in \mathcal{N}\).
  Let \(E_{1}(X), \dots, E_{n}(X)\) be all the list equations of \(\Formula\) with \(\StructureListOneStep{m} \not \models E_{i}(X)\) for \(i = 1, \dots, n\).
  Then by \cref{lem:list_equations_stabilize} there exists \(K \in \NaturalNumbers\) such that \(\StructureListOneStep{m} \not \models E_{i}(w)\) for all \(w \in \NaturalNumbers^{*}\) with \(|w| \geq K\) and \(\StructureListOneStep{m} \not \models E_{i}(N_{k})\) for all \(k \geq K\).
  
  Now let \(A(t_{1}(X))\), \dots, \(A(t_{p}(X))\) be all the \(A\) atoms of \(\Formula\).
  By \cref{lem:A_atoms_eventually_true_on_even_main_suffixes}, there exists \(K' \geq K\) such that for all \(k \in \NaturalNumbers\) with \(k \geq K'\) and \(\notdivides{m}{k}\), we have \(M \models A(t_{q}(N_{k}))\) for \(q = 1, \dots, p\).
  Hence, by taking a sufficiently long prefix \(w\) of \(l\) (\(|w| \geq j\)), we obtain \(K^{\prime\prime} \geq K^{\prime}\) such that \(l = w \frown N_{K^{\prime\prime}}\) and \(\divides{m}{K^{\prime\prime} - 1}\).
Since \(\divides{m}{K^{\prime\prime} -1}\) and \(j < m\), we have \(\notdivides{m}{K^{\prime\prime} + i}\) for \(i = 0, \dots, j-1\).
Thus, \(\StructureListOneStep{m} \models A(t_{q}(l \uparrow |w| - i))\) for \(q = 1, \dots, p\) and \(i = 0, \dots, j -1\).

  Let \(\FormulaB(X)\) be any atom of \(\Formula(X)\) and  \(w' \in \NaturalNumbers^{*}\) with \(|w'| \geq K\), then by the above, for \(i = 0, \dots, j -1\), we have \(\StructureListOneStep{m} \models \FormulaB(w')\) if and only if \(\StructureListOneStep{m} \models \FormulaB(l \uparrow |w| - i)\).
  Hence, \(\StructureListOneStep{m} \models \Formula(l \uparrow |w| - i) \leftrightarrow \Formula(w')\).
  In the first part of the proof we have already shown that \(\StructureListOneStep{m} \models \Formula(w')\).
  Hence, we have \(\StructureListOneStep{m} \models \Formula(l \uparrow |w| - i)\).
  
  Therefore, by a straightforward induction starting with \(\StructureListOneStep{m} \models \Formula(l \uparrow |w| - i)\) for \(i = 0, \dots, j -1\) and by making use of \eqref{assumption:IS} we obtain \(\StructureListOneStep{m} \models \Formula( w' \frown (l \uparrow |w| ))\) for all \(w' \in \NaturalNumbers^{*}\).
  In particular, we have \(\StructureListOneStep{m} \models \Formula(l)\).
\end{proof}
\begin{lemma}
  \label{lem:two_step_induction_counterexample}
  Let \(m \geq 2\), then \(\StructureListOneStep{m} \not \models \ListBigStepInductionAxiom{x}{m}{A(x)}\).
\end{lemma}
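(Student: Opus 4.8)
The plan is to show that in $\StructureListOneStep{m}$ the two hypotheses of the axiom $\ListBigStepInductionAxiom{x}{m}{A(x)}$ hold while its conclusion $\Forall{X}{A(X)}$ fails, so that the implication itself is false. The conclusion and the base cases are immediate from the definition of $A^{\StructureListOneStep{m}}$. For the conclusion, the sequence $N_{0} = (i)_{0 \leq i < \omega}$ has empty main prefix, $w_{N_{0}} = \varepsilon$, and main suffix index $k_{N_{0}} = 0$; since $\divides{m}{0}$ we have $N_{0} \notin A^{\StructureListOneStep{m}}$, so $\StructureListOneStep{m} \not\models \Forall{X}{A(X)}$. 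For the base cases, every instance of $A(\ConsMult{x_{1},\dots,x_{i-1}}{\Nil})$ with $1 \leq i \leq m$ evaluates to a finite sequence of length $i - 1 < \omega$, hence lies in $\NaturalNumbers^{*} \subseteq A^{\StructureListOneStep{m}}$; thus all base cases hold.

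The heart of the argument is the induction step $\Forall{X}{\Forall{x_{1},\dots,x_{m}}{(A(X) \rightarrow A(\ConsMult{x_{1},\dots,x_{m}}{X}))}}$. I would start from the key observation that the only elements outside $A^{\StructureListOneStep{m}}$ are the sequences $N_{k}$ with $\divides{m}{k}$: by definition an element fails to lie in $A^{\StructureListOneStep{m}}$ exactly when its main prefix is $\varepsilon$ (so it has the form $N_{k}$) and $\divides{m}{k}$. Now fix $X$ and naturals $n_{1},\dots,n_{m}$ and analyse the consequent $(n_{1},\dots,n_{m}) \frown X$. If $X \in \NaturalNumbers^{*}$, the consequent is a finite sequence and so lies in $A^{\StructureListOneStep{m}}$, and the implication holds trivially. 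If $X \in \mathcal{N}$, then $(n_{1},\dots,n_{m}) \frown X \in \mathcal{N}$, and by the observation it can fail to lie in $A^{\StructureListOneStep{m}}$ only if $(n_{1},\dots,n_{m}) \frown X = N_{k}$ for some $k$ with $\divides{m}{k}$. Matching coordinates forces $n_{i} = k + i - 1$ for $i = 1,\dots,m$ and $X = N_{k+m}$. But $\divides{m}{k}$ gives $\divides{m}{k+m}$, so $X = N_{k+m}$ itself has empty main prefix and $m$-divisible index, i.e.\ $X \notin A^{\StructureListOneStep{m}}$. Hence whenever the consequent fails, the antecedent $A(X)$ already fails, and the implication holds in every case.

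The main obstacle is exactly this ``wrap-around'' case in the induction step: prepending $m$ elements to a non-standard list can collapse its main prefix to $\varepsilon$ and shift the main suffix index down by $m$, so one must check that this shift is compatible with divisibility by $m$. This is precisely where quotienting the suffix index by $m$ in the definition of $A^{\StructureListOneStep{m}}$ is used: a step of size $m$ maps the forbidden set of indices $\{ k \mid \divides{m}{k}\}$ into itself, which keeps the antecedent false whenever the consequent is false. Combining the three parts, the hypotheses of $\ListBigStepInductionAxiom{x}{m}{A(x)}$ hold in $\StructureListOneStep{m}$ while its conclusion does not, whence $\StructureListOneStep{m} \not\models \ListBigStepInductionAxiom{x}{m}{A(x)}$.
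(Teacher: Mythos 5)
Your proof is correct and takes essentially the same approach as the paper: both check the base cases trivially, prove the induction step by contraposition in the non-standard case (if $(n_{1},\dots,n_{m}) \frown X \notin A$ then $(n_{1},\dots,n_{m}) \frown X = N_{k}$ with $m \mid k$, forcing $X = N_{k+m} \notin A$), and exhibit $N_{0}$ as the witness refuting the conclusion $\Forall{X}{A(X)}$. Your explicit coordinate-matching justification of $X = N_{k+m}$ and the preliminary observation that the complement of $A$ is exactly $\{ N_{k} \mid m \mid k \}$ merely spell out details the paper leaves terse.
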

\begin{proof}
  We have \(\StructureListOneStep{m} \models A(w)\) for all \(w \in \NaturalNumbers^{*}\), hence in particular
  \[
    \text{\(\StructureListOneStep{m} \models A(\ConsMult{x_{1},\dots,x_{j-1}}{\Nil})\) for \(j = 1, \dots, m\).}
  \]
  Now we consider the induction step.
  Let \(l \in \StructureListOneStep{m}(\SortList)\) and \(n_{1}, \dots, n_{m} \in \NaturalNumbers\).
  If \(l \in \NaturalNumbers^{*}\), then by the above we have
  \[
    (\ConsMult{n_{1},\dots,n_{m}}{l})^{\StructureListOneStep{m}} \in \NaturalNumbers^{*} \subseteq A^{\StructureListOneStep{m}}.
  \]
  Hence, \(\StructureListOneStep{m} \models A(X) \rightarrow A(\ConsMult{n_{1},\dots,n_{m}}{l})\).
  For \(l \in \mathcal{N}\) we show the contrapositive of the induction step.
  Suppose first that \((n_{1},\dots,n_{m}) \frown l \notin A^{\StructureListOneStep{m}}\).
  Hence, we have \((n_{1}, \dots, n_{k}) \frown l = N_{k}\) for some \(k\in \NaturalNumbers\) and \(\divides{m}{k}\).
  Thus, \(l = N_{k+m}\) that is \(l \notin A^{\StructureListOneStep{m}}\).
  Hence, \(\StructureListOneStep{m} \models A(x) \rightarrow A(\ConsMult{x_{1},\dots,x_{m}}{X})\).
  However, we also have \(N_{0} \not \in A^{\StructureListOneStep{m}}\).
  Hence, \(\StructureListOneStep{m} \not \models \ListBigStepInductionAxiom{x}{m}{A(x)}\).
\end{proof}
\begin{proof}[Proof of \cref{pro:two_step:main_claim}]
  An immediate consequence of \cref{lem:two_step:structure_satisfies_base_theory,lem:two_step:structure_satisfies_open_induction}, and \cref{lem:two_step_induction_counterexample}.
\end{proof}
So far we have shown that simulating quantifier-free \(m + 1\)-step induction over lists with \(m\)-step induction, is not possible when induction formulas are quantifier-free.
The simulation of big-step induction in \cref{lem:12} by one-step induction makes use of universal quantifiers and conjunction.
This gives rise to the question whether the use of conjunction is necessary.
We conjecture that it is necessary in the following sense.
By \(\ClausesOf{\Language}\) we denote the set of all clauses (disjunctions of atoms and their negation) over the language \(\Language\).
\begin{conjecture}
  Let \(m \geq 2\), then \[
    \TheoryListBase + \bigcup_{1 \leq j < m}\ListBigStepInductionSchema{\forall_{1}\ClausesOf{\LanguageListA}}{j} \not \vdash \ListBigStepInductionAxiom{x}{m}{A(x)}.
  \]
\end{conjecture}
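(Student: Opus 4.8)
The plan is to prove the conjecture by the same model-theoretic strategy used for \cref{pro:two_step:main_claim}: exhibit a single structure that satisfies $j$-step induction for $1 \le j < m$ over $\forall_{1}(\ClausesOf{\LanguageListA})$ while falsifying $m$-step induction for $A$. My first move is to reuse the structure $\StructureListOneStep{m}$ of \cref{def:two_step:non_standard_model} unchanged. The counterexample direction then requires no new work: since $A(x)$ is itself a clause and hence lies in $\forall_{1}(\ClausesOf{\LanguageListA})$, \cref{lem:two_step_induction_counterexample} already gives $\StructureListOneStep{m} \not\models \ListBigStepInductionAxiom{x}{m}{A(x)}$, and \cref{lem:two_step:structure_satisfies_base_theory} still gives $\StructureListOneStep{m} \models \TheoryListBase$. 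Thus the entire burden is to upgrade \cref{lem:two_step:structure_satisfies_open_induction} from quantifier-free induction formulas to universally quantified clauses.

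The key step is therefore to re-run the proof of \cref{lem:two_step:structure_satisfies_open_induction} for an induction formula of the shape \( \Formula(X) = \Forall{\vec y, \vec Y}{C(X, \vec y, \vec Y)} \), where \(C\) is a disjunction of literals over \(\LanguageListA(\StructureListOneStep{m})\) and \(\vec y\), \(\vec Y\) are individual- and list-variables. As in the original proof, standard elements are handled by genuine induction on the length, so I would concentrate on a non-standard \(l \in \mathcal{N}\). The proof of \cref{lem:two_step:structure_satisfies_open_induction} proceeds atom-by-atom; the difficulty is that under a universal quantifier the relevant object is no longer a single atom but the existence of a falsifying witness tuple that makes every literal of \(C\) false simultaneously. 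Accordingly I would first strengthen \cref{lem:list_equations_stabilize} and \cref{lem:A_atoms_eventually_true_on_even_main_suffixes} to their multi-variable forms: the truth of a list equation \(s = t\) in the variables \(X, \vec Y\), and of an \(A\)-atom \(A(s(X,\vec Y))\), becomes either stable or \(m\)-periodic once \(X\) sits deep enough along a \(N_{k}\)-tail, uniformly in the remaining arguments \(\vec Y\) provided they too are taken deep enough.

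With these stabilisation facts in hand, the remaining argument mirrors the original one. Choosing a long prefix \(w\) of \(l\) with \(l = w \frown N_{K''}\), \(m \mid (K'' - 1)\), and using \(j < m\), the \(j\) consecutive anchor suffixes \(l \uparrow (|w| - i)\) for \(i = 0, \dots, j-1\) all avoid the \(A\)-false residue \(m \mid k\), so every \(A\)-atom of \(C\) behaves on them as it does on a sufficiently long finite list \(w'\). The plan is to show that a falsifying witness for \(\Formula\) at an anchor suffix can be transported to a falsifying witness for \(\Formula\) at \(w'\), contradicting the fact — established by the standard part of the argument — that \(\Formula(w')\) holds; hence \(\Formula\) holds at all \(j\) anchors, and a \(j\)-fold application of the induction step builds \(\Formula\) back up to \(l\) itself.

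The hard part will be precisely this transport of witnesses in the presence of universally quantified list variables. A witness for \(\vec Y\) need not be a finite list: it can be a non-standard element \(N_{k}\) whose index is \emph{aligned} with that of the induction element, so that equational literals such as \(X = \ConsMult{y_{1},\dots,y_{p}}{Y}\) and \(A\)-literals referring to both \(X\) and \(Y\) may hold at the non-standard anchor for reasons that have no finite analogue. Controlling such aligned witnesses is the core difficulty: I would need to show that any falsifying tuple for \(\Formula(l \uparrow (|w|-i))\) can be replaced by one in which each list witness is either a long finite list or a deep \(N_{k}\)-tail in a residue class fixed relative to \(K''\), and that the period-\(m\) behaviour of \(A\) together with \(j < m\) prevents a single clause from exploiting this alignment the way genuine \(m\)-step induction does in \cref{lem:two_step_induction_counterexample}. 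If the unmodified structure \(\StructureListOneStep{m}\) turns out not to support this transport — for instance if list witnesses can detect the period through equalities between distinct tails — the fallback is to refine the interpretation of the sort \(\SortList\) so that the non-standard elements become sufficiently homogeneous for the witness transport to succeed, while preserving both \cref{lem:two_step_induction_counterexample} and the base theory.
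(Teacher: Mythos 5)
You should first be clear that the paper offers no proof of this statement: it is stated there as an open conjecture (the authors prove the quantifier-free case, \cref{pro:two_step:main_claim}, and explicitly leave the \(\forall_{1}\)-clause case open), so the only question is whether your proposal actually closes it, and it does not. The parts of your plan that are solid are exactly the parts that were already free: reusing \(M_{1}^{m}\), noting \(M_{1}^{m} \models \TheoryListBase\) by \cref{lem:two_step:structure_satisfies_base_theory}, and noting that the target formula \(\ListBigStepInductionAxiom{x}{m}{A(x)}\) is the very formula falsified in \cref{lem:two_step_induction_counterexample} (your reason --- that \(A(x)\) ``is a clause'' --- is beside the point, but the conclusion is right). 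Everything else is a sketch of intent. The proposed ``multi-variable forms'' of \cref{lem:list_equations_stabilize} and \cref{lem:A_atoms_eventually_true_on_even_main_suffixes}, stated as stabilization ``uniformly in the remaining arguments provided they too are taken deep enough,'' is not a lemma one can prove as stated: a universally quantified list variable ranges over \emph{all} elements, including witnesses that are not deep at all, or that are non-standard and tied to the induction variable by equations, so there is no quantifier-free, one-variable stabilization statement to fall back on.

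The genuine gap is the step you flag yourself in the last paragraph: transporting a falsifying witness tuple for \(\Formula(l \uparrow (|w|-i))\) --- whose list witnesses may be non-standard elements aligned with \(l\), e.g.\ a witness \(Y\) pinned by an equation literal to a suffix of \(X\) together with a literal \(\neg A(Y)\) --- to a falsifying tuple for \(\Formula(w')\) with \(w'\) standard. Nothing in the proposal proves this transport is possible; you even concede it may fail and offer as fallback ``refine the interpretation of the sort \(\SortList\),'' which is a promise of a construction, not a construction. To close the gap along your lines one would need roughly two concrete arguments, neither of which appears: (a) any literal \(\neg A(t)\) whose argument is linked by equation literals to the induction variable pins the pure tails \(N_{k}\) lying in the counterexample set to a single residue class of \(k\) modulo \(m\); but that set is closed under \(j\)-fold suffixing (contrapositive of the induction step), so the realized residues form a coset of the subgroup generated by \(j\) in \(\Integers/m\Integers\), which has at least two elements since \(1 \leq j < m\) --- a contradiction, so such literals cannot occur; and (b) for conjunctions free of such literals, a truncation map sending each non-standard \(w \frown N_{k}\) to the long finite list \(w \frown (k, k+1, \dots, k+R-1)\), applied to the induction element and all witnesses equationally linked to it, preserves equations, preserves disequations and positive \(A\)-literals for \(R\) large, and hence would place a standard element in the counterexample set, contradicting the base cases. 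Without (a) and (b) (or some substitute for them), your text is a restatement of the difficulty: its central unproved claim is precisely the content of the conjecture.
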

This conjecture is particularly interesting for the methods presented in \cite{reger2019,hajdu2020,hajdu2021b}.
As shown in \cite{hetzl2023apal,vierling2022} these methods carry out induction on literals and clauses.
However, the results in \cite{hetzl2023apal,vierling2022} are formulated for induction over natural numbers and need to be adapted to the case for induction over lists and other recursive datatypes.
A positive answer to the conjecture above together with analogues of the results \cite{hetzl2023apal,vierling2022} would provide a formal justification for the necessity to implement more powerful induction rules that handle conjunction and quantification such as described in \cite{hajdu2021b}.
As a byproduct, the formulas \(\ListBigStepInductionAxiom{X}{m}{A(X)}\) with \(m \geq 1\) form a set of benchmark problems of increasing difficulty for automated theorem provers.

The above shows that mechanizing induction on lists is more complicated than induction on natural numbers in the sense that a reduction of big-step induction to one-step induction requires induction formulas with a higher quantifier-complexity.
In the following we will consider lists with a concatenation operation and we will show that big-step induction does not prove the right cancellation of concatenation.
\section{Right cancellation of list concatenation}
\label{sec:right_cancellation}
In the previous section we have shown that quantifier-free \((m+1)\)-big step induction is strictly stronger than quantifier-free \(m\)-step induction, but not stronger than \(\forall_{1}\) induction.
In this section we show that big-step quantifier-free induction is in general strictly weaker than \(\forall_{1}\) induction.
We will prove this result by showing that the right cancellation property of the append operation on lists can not be proved with quantifier-free big-step induction on lists.
This result is of particular interest for the automation of proof by mathematical induction, since it implies the necessity to work with induction rules that exceed the power quantifier-free big-step induction to handle comparatively basic properties such as the right cancellation of list concatenation.

In the following we will work with a language that extends the base language of lists \(\LanguageListBase\) by an infix symbol for the concatenation of lists.
We will work with the usual left-recursive definition of concatenation.
\begin{definition}
  \label{def:theory_list_append}
  The infix function symbol \(\ListAppend{\cdot}{\cdot} : \SortList \times \SortList \to \SortList\) represents the append operation on lists.
  We define the language \(\LanguageListAppend\) to be \(\LanguageListBase \cup \{ \frown \}\).
  The theory \(\TheoryListAppend\) extends the base theory of lists \(\TheoryListBase\) by the following axioms
  \begin{gather}
    \ListAppend{\Nil}{Y} = Y, \label{ax:thlistappend:1} \tag{L1.1}
    \\
    \ListAppend{\Cons{x}{X}}{Y} = \Cons{x}{\ListAppend{X}{Y}}. \label{ax:thlistappend:2} \tag{L1.2}
  \end{gather}
\end{definition}
In the following lemmas we prove several properties about lists, and in particular the concatenation operation, using increasingly powerful induction principles.
We start by proving some simple properties with quantifier-free induction.
\begin{lemma}
  \label{lem:8}
  The theory \(\TheoryListAppend + \ListInductionSchema{\Open(\LanguageListAppend)}\) proves the following formulas
  \begin{enumerate}[label=(\roman*)]
  \item \(X \frown \Nil = X\),
  \item \(X \frown (Y \frown Z) = (X \frown Y) \frown Z\).
  \end{enumerate}  
\end{lemma}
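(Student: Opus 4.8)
The plan is to prove both statements by structural induction on lists, using the single-step quantifier-free induction schema $\ListInductionSchema{\Open(\LanguageListAppend)}$ directly on the claimed equations, which are themselves quantifier-free and live in $\Open(\LanguageListAppend)$. For part \textit{(i)}, I would carry out induction on $X$ in the formula $\Formula(X) \coloneqq (X \frown \Nil = X)$. The base case requires $\Nil \frown \Nil = \Nil$, which is immediate from the append axiom \eqref{ax:thlistappend:1} instantiated with $Y = \Nil$. For the induction step, assuming $X \frown \Nil = X$, I compute $\Cons{x}{X} \frown \Nil = \Cons{x}{X \frown \Nil}$ by \eqref{ax:thlistappend:2}, and then rewrite $X \frown \Nil$ to $X$ using the induction hypothesis, yielding $\Cons{x}{X}$ as required. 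Note that the free variable over elements in the step ($x : \SortIndividuals$) is universally quantified away in the schema of \cref{def:induction_schema_list}, so this is a legitimate instance.

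For part \textit{(ii)}, the cleanest route is to fix $Y$ and $Z$ as parameters (the $\vec{z}$ in \cref{def:induction_schema_list}) and induct on $X$ in the quantifier-free formula $\Formula(X,Y,Z) \coloneqq (X \frown (Y \frown Z) = (X \frown Y) \frown Z)$. The base case $\Nil \frown (Y \frown Z) = (\Nil \frown Y) \frown Z$ reduces, via two applications of \eqref{ax:thlistappend:1}, to the trivial identity $Y \frown Z = Y \frown Z$. For the step, assuming $\Formula(X,Y,Z)$, I expand the left-hand side $\Cons{x}{X} \frown (Y \frown Z) = \Cons{x}{X \frown (Y \frown Z)}$ and the right-hand side $(\Cons{x}{X} \frown Y) \frown Z = \Cons{x}{X \frown Y} \frown Z = \Cons{x}{(X \frown Y) \frown Z}$, both using \eqref{ax:thlistappend:2} (applied twice on the right). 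The two outer $\ConsSymbol$-terms agree precisely by the induction hypothesis applied inside the $\ConsSymbol$, and congruence of equality finishes the step.

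The main point to be careful about — rather than a genuine obstacle — is keeping the associativity statement quantifier-free so that it falls within $\Open(\LanguageListAppend)$: by treating $Y,Z$ as induction parameters rather than quantifying over them, the induction formula stays atomic (a single equation), so it is trivially quantifier-free and the instance of $\ListInductionSchema{\Open(\LanguageListAppend)}$ is available. The universal closure in \cref{def:induction_schema_list} then reintroduces $Y$ and $Z$ as outermost universal quantifiers, recovering the full statement $\Forall{X}{\Forall{Y}{\Forall{Z}{\Formula}}}$. No appeal to the base theory $\TheoryListBase$ or to \cref{lem:11} is needed here; only the two defining append axioms and the equality axioms of first-order logic are used, and both proofs are entirely routine single-step inductions with step size one.
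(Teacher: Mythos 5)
Your proof is correct and matches the paper's approach exactly: the paper also proves both identities by a straightforward one-step induction on \(X\), using \eqref{ax:thlistappend:1} and \eqref{ax:thlistappend:2}, with \(Y,Z\) as side parameters in (\textit{ii}). Your write-up simply fills in the routine base and step cases that the paper leaves implicit.
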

\begin{proof}
  For both formulas we use a straightforward induction on \(X\) and making use of \eqref{ax:thlistappend:1}, \eqref{ax:thlistappend:2}.
\end{proof}
We prove the next property, the right cancellation for single-element lists, using simultaneous induction on two variables.
\begin{definition}
  \label{def:double_induction}
  Let \(\varphi(X,Y,\vec{z})\) be a formula, then the formula \(\ListDoubleInductionAxiom{X,Y}{\varphi}\) is given by
  \begin{multline*}
    \left(
    \begin{split}
      & \Forall{X}{\varphi(X,\Nil,\vec{z})} \wedge \Forall{Y}{\varphi(\Nil,Y,\vec{z})} \\
      & \wedge \Forall{X}{\Forall{Y}{\Forall{x}{\Forall{y}{\left(\varphi(X,Y,\vec{z}) \rightarrow \varphi(\Cons{x}{X},\Cons{y}{Y},\vec{z})\right)}}}}
    \end{split}
    \right)
    \\
      \rightarrow \Forall{X}{\Forall{Y}{\varphi(X,Y,\vec{z})}}.
    \end{multline*}
    Let \(\Gamma\) be a set of formulas, then the theory \(\ListDoubleInductionSchema{\Gamma}\) is axiomatized by the sentences \(\Forall{\vec{z}}{\ListDoubleInductionAxiom{X,Y}{\varphi(X,Y,\vec{z})}}\) with \(\varphi(X,Y,\vec{z}) \in \Gamma\).
\end{definition}
\begin{lemma}
  \label{lem:9}
  \(\TheoryListAppend + \ListDoubleInductionSchema{\Open(\LanguageListAppend)}\) proves \[
    Y \frown \Cons{x}{\Nil} = Z \frown \Cons{x}{\Nil} \rightarrow Y = Z.
  \]
\end{lemma}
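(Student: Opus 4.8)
The plan is to prove the implication by simultaneous induction on $Y$ and $Z$, i.e. by an application of $\ListDoubleInductionSchema{\Open(\LanguageListAppend)}$ (\cref{def:double_induction}) to the quantifier-free formula $\varphi(Y,Z,x) \equiv (\ListAppend{Y}{\Cons{x}{\Nil}} = \ListAppend{Z}{\Cons{x}{\Nil}} \rightarrow Y = Z)$. The induction step is the routine part: assuming $\varphi(Y,Z,x)$ together with $\ListAppend{\Cons{y}{Y}}{\Cons{x}{\Nil}} = \ListAppend{\Cons{z}{Z}}{\Cons{x}{\Nil}}$, I rewrite both sides using \eqref{ax:thlistappend:2} to $\Cons{y}{\ListAppend{Y}{\Cons{x}{\Nil}}} = \Cons{z}{\ListAppend{Z}{\Cons{x}{\Nil}}}$, apply the injectivity axiom \eqref{ax:thlistbase:2} to obtain $y = z$ and $\ListAppend{Y}{\Cons{x}{\Nil}} = \ListAppend{Z}{\Cons{x}{\Nil}}$, and then invoke the induction hypothesis to conclude $Y = Z$, whence $\Cons{y}{Y} = \Cons{z}{Z}$.

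What remains are the two base cases. Simplifying $\ListAppend{\Nil}{\Cons{x}{\Nil}}$ to $\Cons{x}{\Nil}$ by \eqref{ax:thlistappend:1}, the base case $\Forall{Y}{\varphi(Y,\Nil,x)}$ becomes $\ListAppend{Y}{\Cons{x}{\Nil}} = \Cons{x}{\Nil} \rightarrow Y = \Nil$, and the base case $\Forall{Z}{\varphi(\Nil,Z,x)}$ is its mirror image; both are therefore instances of the single auxiliary claim $\Forall{W}{(\ListAppend{W}{\Cons{x}{\Nil}} = \Cons{x}{\Nil} \rightarrow W = \Nil)}$, which I isolate and prove first. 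The main obstacle is precisely this claim: it is right cancellation specialised to the empty list, so it cannot be fed by the very double induction it is meant to supply, and it is a genuinely one-variable statement that one would ordinarily settle by one-step induction combined with the case distinction $X = \Nil \vee \Exists{x'}{\Exists{X'}{X = \Cons{x'}{X'}}}$ of \cref{lem:11}.(\textit{ii}). Neither tool is available here, since the theory provides only double induction and the case distinction is not quantifier-free.

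My remedy is a \emph{diagonal} double induction. I apply $\ListDoubleInductionSchema{\Open(\LanguageListAppend)}$ to the formula $\rho(W,V,x) \equiv (W = V \rightarrow (\ListAppend{W}{\Cons{x}{\Nil}} \neq \Nil \wedge (\ListAppend{W}{\Cons{x}{\Nil}} = \Cons{x}{\Nil} \rightarrow W = \Nil)))$, which folds the needed non-emptiness fact into the claim and guards it by an equation $W = V$. Setting the second variable to $\Nil$ in either base case of this auxiliary induction either fixes $W = \Nil$ or leaves the guard unsatisfiable, so both base cases collapse to trivialities provable from \eqref{ax:thlistappend:1} and \eqref{ax:thlistbase:1}. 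In the induction step, $\Cons{w}{W} = \Cons{v}{V}$ gives $W = V$ by \eqref{ax:thlistbase:2}, so the induction hypothesis applies and yields $\ListAppend{W}{\Cons{x}{\Nil}} \neq \Nil$; at the same time $\ListAppend{\Cons{w}{W}}{\Cons{x}{\Nil}} = \Cons{w}{\ListAppend{W}{\Cons{x}{\Nil}}}$ is manifestly a $\ConsSymbol$-term, so the non-emptiness conjunct holds outright, and if $\Cons{w}{\ListAppend{W}{\Cons{x}{\Nil}}} = \Cons{x}{\Nil}$ then \eqref{ax:thlistbase:2} forces $\ListAppend{W}{\Cons{x}{\Nil}} = \Nil$, contradicting the non-emptiness just obtained, so the second conjunct holds vacuously. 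Instantiating $V := W$ in $\Forall{W}{\Forall{V}{\rho(W,V,x)}}$ then produces the auxiliary claim, which discharges both base cases of the main double induction and completes the proof.
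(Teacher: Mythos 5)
Your proof is correct, and its skeleton --- double induction on the implication itself, with the step case handled by \eqref{ax:thlistappend:2}, \eqref{ax:thlistbase:2}, and the induction hypothesis --- is exactly the paper's. Where you genuinely diverge is the base case. The paper settles it by invoking the case distinction of \cref{lem:11}.(\textit{ii}): from \(\Cons{x}{\Nil} = \ListAppend{Z}{\Cons{x}{\Nil}}\) it splits on \(Z = \Nil\) versus \(Z = \Cons{z'}{Z'}\), and in the latter case splits once more on \(Z'\), deriving contradictions with \eqref{ax:thlistbase:1}. You instead prove the auxiliary claim \(\ListAppend{W}{\Cons{x}{\Nil}} = \Cons{x}{\Nil} \rightarrow W = \Nil\) by a second, guarded double induction on \(\rho(W,V,x)\), and your motivating observation is sharp: \cref{lem:11}.(\textit{ii}) is proved in the paper using one-step quantifier-free induction, which is not literally part of \(\TheoryListAppend + \ListDoubleInductionSchema{\Open(\LanguageListAppend)}\), and the case-analysis formula itself is not quantifier-free; so the paper's appeal to \cref{lem:11} is only licit modulo the unstated fact that open double induction (together with \eqref{ax:thlistbase:2}) simulates open one-step induction --- a fact whose proof is precisely your diagonal-guard trick \(\varphi(X,Y) \equiv (X = Y \rightarrow \psi(X))\). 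Thus the paper's route is shorter and reuses an existing lemma but leaves this simulation implicit, whereas yours is self-contained in the stated theory, at the cost of a second induction and of folding the non-emptiness conjunct into \(\rho\); your version is arguably more faithful to the letter of the lemma. One cosmetic nit: in your auxiliary base cases the guard is never ``unsatisfiable'' --- when the second variable is \(\Nil\) the guard forces \(W = \Nil\), and when the first variable is \(\Nil\) the conclusion holds outright from \eqref{ax:thlistappend:1} and \eqref{ax:thlistbase:1}; your detailed verification is right, only the summary phrase misdescribes it.
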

\begin{proof}
  We proceed by induction on \(Y\) and \(Z\) simultaneously.
  We consider only one of the two base cases, since the other one is symmetric.
  For the base case \(Y = \Nil\) we assume \(\Nil \frown \Cons{x}{\Nil} = Z \frown \Cons{x}{\Nil}\) and we have to show that \(Z = \Nil\).
  First of all, by \eqref{ax:thlistappend:1} we obtain \(\Cons{x}{\Nil} = Z \frown \Cons{x}{\Nil}\).
  By \cref{lem:11} we can consider two cases.
  If \(Z = \Nil\), then we are done.
  Otherwise, there are \(z'\) and \(Z'\) such that \(Z = \Cons{z'}{Z'}\).
  Thus
  \begin{align*}
    \Cons{x}{\Nil}
    & = \Cons{z'}{Z'} \frown \Cons{x}{\Nil}
    \\
    & =_{\eqref{ax:thlistappend:2}} \Cons{z'}{Z' \frown \Cons{x}{\Nil}}.
  \end{align*}
  Therefore, by \eqref{ax:thlistbase:2} we have in particular \(\Nil = Z' \frown \Cons{x}{\Nil}\).
  We apply \cref{lem:11} and consider two cases.
  If \(Z' = \Nil\), then \(\Nil = \Nil \frown \Cons{x}{\Nil} = \Cons{x}{\Nil}\), which contradicts \eqref{ax:thlistbase:1}.
  Otherwise, there are \(z^{\prime\prime}\) and \(Z^{\prime\prime}\) such that \(Z' = \Cons{z^{\prime\prime}}{Z^{\prime\prime}}\), then, by \eqref{ax:thlistappend:2},
  \(\Nil = \Cons{z^{\prime\prime}}{Z^{\prime\prime}\frown \Cons{x}{\Nil}}\), which contradicts \eqref{ax:thlistbase:1}.
  For the induction step assume \(Y \frown \Cons{x}{\Nil} = Z \frown \Cons{x}{\Nil} \rightarrow Y = Z\) and \(\Cons{y}{Y} \frown \Cons{x}{\Nil} = \Cons{z}{Z} \frown \Cons{x}{\Nil}\).
  Then by \eqref{ax:thlistappend:2} and \eqref{ax:thlistbase:2} we obtain \(y = z\) and
  \[
    Y \frown \Cons{x}{\Nil} = Z \frown \Cons{x}{\Nil}.
  \]
  By the induction hypothesis we obtain \(Y = Z\), thus, \(\Cons{y}{Y} = \Cons{z}{Z}\).
\end{proof}
Observe that double induction is contained within induction on \(\forall_{1}\) formulas when working modulo case analysis \(\mathrm{CA}\) given by
\[
  \Forall{X}{\left(X = \Nil \vee \Exists{X'}{\Exists{x'}{X = \Cons{x'}{X'}}}\right)}.
\]
\begin{lemma}
  \label{lem:10}
  \(\mathrm{CA} + \ListInductionSchema{\forall_{1}(\Language)} \vdash \ListDoubleInductionSchema{\Open(\Language)}\).
\end{lemma}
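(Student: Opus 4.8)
I want to show that $\mathrm{CA} + \ListInductionSchema{\forall_1(\Language)} \vdash \ListDoubleInductionSchema{\Open(\Language)}$. So I fix a quantifier-free formula $\varphi(X,Y,\vec z)$ and assume the three premises of the double-induction axiom $\ListDoubleInductionAxiom{X,Y}{\varphi}$: the two base cases $\Forall{X}{\varphi(X,\Nil,\vec z)}$ and $\Forall{Y}{\varphi(\Nil,Y,\vec z)}$, and the diagonal step that takes $\varphi(X,Y,\vec z)$ to $\varphi(\Cons{x}{X},\Cons{y}{Y},\vec z)$. I must derive $\Forall{X}{\Forall{Y}{\varphi(X,Y,\vec z)}}$ using only single-variable list induction on a $\forall_1$ formula, plus case analysis $\mathrm{CA}$.

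**Approach.** The natural plan is to fold the inner universal quantifier into the induction formula and run single-variable induction on $X$. Define the $\forall_1$ induction formula
\[
  \FormulaB(X,\vec z) \coloneqq \Forall{Y}{\varphi(X,Y,\vec z)},
\]
which is indeed in $\forall_1(\Language)$ since $\varphi$ is quantifier-free. First I would apply $\ListInductionAxiom{X}{\FormulaB}$; it suffices to establish the base case $\FormulaB(\Nil,\vec z)$, i.e. $\Forall{Y}{\varphi(\Nil,Y,\vec z)}$, which is exactly the second premise, and the step $\Forall{X}{\Forall{x}{(\FormulaB(X,\vec z)\rightarrow\FormulaB(\Cons{x}{X},\vec z))}}$.

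**The step case.** This is where the real work and the main obstacle lie. Fix $X$, $x$, and the induction hypothesis $\FormulaB(X,\vec z)$, namely $\Forall{Y}{\varphi(X,Y,\vec z)}$; I must prove $\FormulaB(\Cons{x}{X},\vec z)$, i.e. $\Forall{Y}{\varphi(\Cons{x}{X},Y,\vec z)}$. So fix an arbitrary $Y$ and apply $\mathrm{CA}$ to it. If $Y = \Nil$, then I need $\varphi(\Cons{x}{X},\Nil,\vec z)$, which follows from the \emph{first} base-case premise $\Forall{X}{\varphi(X,\Nil,\vec z)}$ instantiated at $\Cons{x}{X}$. If instead $Y = \Cons{y}{Y'}$ for some $y,Y'$, then I need $\varphi(\Cons{x}{X},\Cons{y}{Y'},\vec z)$; this is exactly the conclusion of the diagonal step premise instantiated at $X,Y',x,y$, whose hypothesis $\varphi(X,Y',\vec z)$ is supplied by the induction hypothesis $\FormulaB(X,\vec z)$ instantiated at $Y'$. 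This completes the step, hence the induction gives $\Forall{X}{\FormulaB(X,\vec z)} = \Forall{X}{\Forall{Y}{\varphi(X,Y,\vec z)}}$, as required.

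**Remark on the obstacle.** The only genuine subtlety is that the diagonal premise advances \emph{both} arguments in lockstep, whereas my outer induction advances only $X$. The role of $\mathrm{CA}$ is precisely to resolve this mismatch: in the step case it lets me split the fixed $Y$ into either $\Nil$ (handled by the first base case, which double induction conveniently provides) or a $\ConsSymbol$-term $\Cons{y}{Y'}$ (handled by the diagonal step, with the induction hypothesis feeding the decremented second argument $Y'$). Without $\mathrm{CA}$ one could not decompose $Y$ inside a quantifier-free proof; this is why the lemma is stated modulo $\mathrm{CA}$, and it is the essential ingredient that makes the reduction go through.
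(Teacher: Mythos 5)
Your proof is correct and follows essentially the same route as the paper: induction on \(X\) in the \(\forall_{1}\) formula \(\Forall{Y}{\varphi(X,Y,\vec{z})}\), with \(\mathrm{CA}\) applied to \(Y\) in the step case, the \(Y = \Nil\) branch closed by the first base-case premise at \(\Cons{x}{X}\), and the \(Y = \Cons{y}{Y'}\) branch closed by the diagonal premise fed by the induction hypothesis at \(Y'\). Your closing remark about the role of \(\mathrm{CA}\) accurately identifies the same mechanism the paper relies on.
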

\begin{proof}
  Let \(\varphi(X,Y,\vec{z})\) be a quantifier-free \(\Language\) formula.
  Let \(X, Y, \vec{z}\) be fixed and assume \(\Forall{X}{\varphi(X,\Nil,\vec{z})}\), \(\Forall{Y}{\varphi(\Nil,Y,\vec{z})}\), and \[
    \Forall{X}{\Forall{Y}{\Forall{x}{\Forall{y}{\left(\varphi(X,Y,\vec{z}) \rightarrow \varphi(\Cons{x}{X},\Cons{y}{Y},\vec{z})\right)}}}}.
  \]
  We proceed by induction on \(X\) in \(\Forall{Y}{\varphi(X,Y,\vec{z})}\).
  The base case follows immediately from the assumptions.
  For the step case assume \(\Forall{Y}{\varphi(X,Y,\vec{z})}\) and let \(Y\) be fixed.
  By \(\mathrm{CA}\) we can consider two cases.
  If \(Y = \Nil\), then we are done by the assumption.
  Otherwise, there are \(y'\) and \(Y'\) such that \(Y = \Cons{y'}{Y'}\).
  By the induction hypothesis, we obtain \(\varphi(X, Y', \vec{z})\).
  Hence, by the third assumptions, we have \(\varphi(\Cons{x}{X},  \Cons{y'}{Y'}, \vec{z})\), that is, \(\varphi(\Cons{x}{X}, Y, \vec{z})\).
\end{proof}
Using induction on a \(\forall_{1}\) formula, we can straightforwardly prove the right cancellation of the append operation for arbitrary lists.
\begin{lemma}
  \label{lem:6}
  The theory \(\TheoryListAppend + \ListInductionSchema{\forall_{1}(\LanguageListAppend)}\) proves \[
    Y \frown X = Z \frown X \rightarrow Y = Z.
  \]
\end{lemma}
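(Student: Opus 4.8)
The plan is to proceed by induction on \(X\) in the \(\forall_{1}(\LanguageListAppend)\) formula
\[
  \Forall{Y}{\Forall{Z}{\left(\ListAppend{Y}{X} = \ListAppend{Z}{X} \rightarrow Y = Z\right)}},
\]
thereby reducing right cancellation of an arbitrary suffix \(X\) to right cancellation of single-element suffixes, which is already available by \cref{lem:9}. Let me first note that \cref{lem:9} is indeed applicable in the present theory: by \cref{lem:11}.(\textit{ii}) the case analysis axiom \(\mathrm{CA}\) is provable, so by \cref{lem:10} the double induction schema \(\ListDoubleInductionSchema{\Open(\LanguageListAppend)}\) is derivable, which is exactly what the proof of \cref{lem:9} requires. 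Likewise \cref{lem:8} is available, as it uses only quantifier-free induction.

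For the base case \(X = \Nil\), I would invoke \cref{lem:8}.(\textit{i}) to rewrite \(\ListAppend{Y}{\Nil} = Y\) and \(\ListAppend{Z}{\Nil} = Z\), so that the antecedent \(\ListAppend{Y}{\Nil} = \ListAppend{Z}{\Nil}\) immediately yields \(Y = Z\).

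For the induction step, fix \(x'\) and assume the induction hypothesis \(\Forall{Y}{\Forall{Z}{\left(\ListAppend{Y}{X} = \ListAppend{Z}{X} \rightarrow Y = Z\right)}}\). Let \(Y, Z\) be arbitrary and suppose \(\ListAppend{Y}{\Cons{x'}{X}} = \ListAppend{Z}{\Cons{x'}{X}}\). The key move is to observe, using \eqref{ax:thlistappend:1} and \eqref{ax:thlistappend:2}, that \(\Cons{x'}{X} = \ListAppend{\Cons{x'}{\Nil}}{X}\), and then, by associativity (\cref{lem:8}.(\textit{ii})), to rewrite
\[
  \ListAppend{Y}{\Cons{x'}{X}} = \ListAppend{(\ListAppend{Y}{\Cons{x'}{\Nil}})}{X},
\]
and analogously for \(Z\). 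Hence the assumption becomes \(\ListAppend{(\ListAppend{Y}{\Cons{x'}{\Nil}})}{X} = \ListAppend{(\ListAppend{Z}{\Cons{x'}{\Nil}})}{X}\). Instantiating the induction hypothesis with the lists \(\ListAppend{Y}{\Cons{x'}{\Nil}}\) and \(\ListAppend{Z}{\Cons{x'}{\Nil}}\) yields \(\ListAppend{Y}{\Cons{x'}{\Nil}} = \ListAppend{Z}{\Cons{x'}{\Nil}}\), and finally \cref{lem:9} cancels the single-element suffix to give \(Y = Z\).

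The main obstacle is conceptual rather than technical: one must notice that the element \(x'\) added when passing from suffix \(X\) to suffix \(\Cons{x'}{X}\) can be reassociated so that it is absorbed into the prefixes \(Y\) and \(Z\), turning cancellation of \(\Cons{x'}{X}\) into cancellation of \(X\) (handled by the induction hypothesis) together with a single-element cancellation step (handled by \cref{lem:9}). Once this reduction is arranged, each individual step is a routine rewriting with the append axioms, and the induction formula remains a \(\forall_{1}\) formula throughout, so no induction beyond \(\ListInductionSchema{\forall_{1}(\LanguageListAppend)}\) is needed.
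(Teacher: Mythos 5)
Your proposal is correct and follows essentially the same route as the paper's proof: induction on \(X\) in the \(\forall_{1}\) formula \(\Forall{Y}{\Forall{Z}{\left(Y \frown X = Z \frown X \rightarrow Y = Z\right)}}\), with the base case discharged by \cref{lem:8}.(\textit{i}) and the step case handled by reassociating \(Y \frown \Cons{x}{X}\) into \((\ListAppend{Y}{\Cons{x}{\Nil}}) \frown X\), applying the induction hypothesis, and then cancelling the single-element suffix via \cref{lem:9,lem:10}. Your additional remark justifying the availability of \(\mathrm{CA}\) through \cref{lem:11}.(\textit{ii}) makes explicit a point the paper leaves implicit, but the argument is the same.
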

\begin{proof}  
  We proceed by induction on \(X\) in the formula \[
    \Forall{Y}{\Forall{Z}{\left(Y \frown X = Z \frown X \rightarrow Y = Z\right)}}.
  \]
  For the base case, let \(Y\) and \(Z\) be arbitrary and assume \(Y \frown \Nil = Z \frown \Nil\).
  By \cref{lem:8} we readily obtain \(Y = Z\).
  For the step case we assume \[
    \Forall{Y}{\Forall{Z}{Y \frown X = Z \frown X \rightarrow Y = Z}}.
  \]
  and \(Y \frown \Cons{x}{X} = Z \frown \Cons{x}{X}\).
  By \eqref{ax:thlistappend:1}, \eqref{ax:thlistappend:2}, and \cref{lem:8} we obtain
  \begin{gather*}
    (Y \frown \Cons{x}{\Nil}) \frown X = (Z \frown \Cons{x}{\Nil}) \frown X.
  \end{gather*}
  By the induction hypothesis we obtain \(Y \frown \Cons{x}{\Nil} = Z \frown \Cons{x}{\Nil}\).
  Hence, by \cref{lem:9,lem:10} we obtain \(Y = Z\).
\end{proof}
In the remainder of this section we will show that right cancellation of append cannot be proved by quantifier-free big-step induction on lists.
\begin{theorem}
  \label{thm:open_ind_not_proves_right_cancellation}
  \[
    \TheoryListAppend + \bigcup_{m \in \NaturalNumbers}\ListBigStepInductionSchema{\Open(\LanguageListAppend)}{m+1} \not \vdash Y \frown X = X \rightarrow Y = \Nil.
  \]
\end{theorem}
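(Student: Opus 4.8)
The plan is to construct a model $M$ of $\TheoryListAppend$ together with all the quantifier-free big-step induction schemata $\ListBigStepInductionSchema{\Open(\LanguageListAppend)}{m+1}$, in which the right-cancellation instance $Y \frown X = X \rightarrow Y = \Nil$ fails. To refute right cancellation we need elements $b$ (for $Y$) and $a$ (for $X$) with $b \neq \Nil$ but $b \frown a = a$. The guiding intuition, already present in the previous section, is that a transfinite sequence absorbing a nonempty prefix is exactly such a witness: if $a = v^{\omega}$ for some nonempty finite $v$, then $v \frown a = a$. So the domain of lists should again consist of $\NaturalNumbers^{*}$ together with suitable transfinite sequences, but now long enough (up to $\omega^{3}$, as hinted in \cref{sec:preliminaries:sequences}) that a whole family of absorbing suffixes is available. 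The symbol $\frown$ will of course be interpreted as concatenation of ordinal-indexed sequences from \cref{def:concatenation_ord_indexed_sequences}, whose associativity and left cancellation are guaranteed by \cref{lem:ordinal_indexed_sequence_concatenation_has_left_cancellation}, and whose \emph{failure} of right cancellation is precisely what we are exploiting.

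First I would fix the domain of the list sort as $\NaturalNumbers^{*}$ together with a carefully chosen set of transfinite sequences closed under $\ConsSymbol$ (prepending a single element) and under the interpretation of $\frown$; one must check this set is closed under both operations, and that $\frown$ restricted to it lands back inside the domain. The axioms of $\TheoryListAppend$ then split into two parts: the base-theory axioms \eqref{ax:thlistbase:1} and \eqref{ax:thlistbase:2}, verified exactly as in \cref{lem:two_step:structure_satisfies_base_theory} via left cancellation of sequence concatenation, and the two recursion axioms \eqref{ax:thlistappend:1}, \eqref{ax:thlistappend:2}, which hold because $\Nil^{M} = \varepsilon$ is a left unit for $\frown$ and because prepending commutes with concatenation on the left. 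Then I would exhibit the counterexample: choose a nonempty finite $v$ and an absorbing sequence $a$ with $v \frown a = a$, so that $M \models b \frown a = a$ while $b \neq \Nil$, witnessing $M \not\models (Y \frown X = X \rightarrow Y = \Nil)$.

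The heart of the argument, and the main obstacle, is verifying that $M$ satisfies \emph{every} instance of quantifier-free $(m+1)$-step induction over $\LanguageListAppend$ simultaneously for all $m$. This is strictly harder than the analysis in \cref{sec:two_step} for two reasons: the language now contains $\frown$, so atoms are equations between terms built from $\frown$ and $\ConsSymbol$ applied to the induction variable and parameters, not just the simple prefix-shift terms $w \frown l$ handled by \cref{lem:list_equations_stabilize}; and we need the induction to go through for the step-size $m+1$ being arbitrary rather than bounded below $m$. The strategy I would follow mirrors the stabilization technique: show that any quantifier-free $\LanguageListAppend(M)$ formula $\Formula(X)$, evaluated along the relevant transfinite suffixes of a non-standard element, becomes eventually periodic (or eventually constant) in a way compatible with the chosen step size. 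Concretely, each term $t(X)$ in $X$ should evaluate to $u \frown (X \uparrow \delta)$-type expressions whose truth value on deep suffixes of $a$ stabilizes, so that once the induction hypothesis holds on a tail it propagates upward through \eqref{assumption:IS}-style reasoning to all of the domain, and in particular to $a$ itself. The transfinite length $\omega^{3}$ is presumably needed precisely to give enough room for these stabilization phenomena across the nested structure of the absorbing sequences while still defeating right cancellation; pinning down the exact domain so that all of these closure and stabilization properties hold at once is where the real work lies. I would therefore isolate an analogue of \cref{lem:list_equations_stabilize} for $\LanguageListAppend$-equations as a separate lemma, prove closure of the domain under the operations as another, and only then assemble the satisfaction of big-step induction and the refutation of right cancellation into the final contradiction.
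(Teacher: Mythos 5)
Your overall architecture does mirror the paper's proof (interpret $\frown$ as concatenation of transfinite sequences, verify \(\TheoryListAppend\) via left cancellation, isolate a stabilization lemma for \(\LanguageListAppend\)-equations, and propagate induction from standard elements to deep suffixes of non-standard ones), but your choice of counterexample witnesses is a genuine error, not a detail. You propose to refute right cancellation with a \emph{finite} nonempty \(v\) and an absorbing sequence \(a\) with \(v \frown a = a\) (e.g.\ \(a = v^{\omega}\)). No such pair can exist in any model of the theory in question: the union \(\bigcup_{m \in \NaturalNumbers}\ListBigStepInductionSchema{\Open(\LanguageListAppend)}{m+1}\) contains (at \(m = 0\)) quantifier-free one-step induction over \(\LanguageListBase \subseteq \LanguageListAppend\), and by \cref{lem:11}.(\textit{i}) this already proves the acyclicity formulas \(X \neq \ConsMult{x_{1},\dots,x_{n}}{X}\) for every \(n \geq 1\). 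If \(v = (v_{1},\dots,v_{n})\) is finite, nonempty, and \(v \frown a = a\), then under your interpretation the model satisfies \(a = \ConsMult{v_{1},\dots,v_{n}}{a}\), so it cannot satisfy the very induction schema you must verify; the ``heart of the argument'' step of your plan is therefore doomed rather than merely difficult. Note that the lesson of \cref{sec:two_step} is exactly the opposite of what you cite: sequences \(v^{\omega}\) with \(v \in \NaturalNumbers^{*}\) absorb a finite prefix and must be \emph{excluded} from the domain, which is why \cref{def:two_step:non_standard_elements} builds \(\mathcal{N}\) from the strictly increasing tails \(N_{k}\).

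The repair, which is what the paper does, is to make the absorbed prefix itself non-standard: take \(Y = N_{0}\) (length \(\omega\)) and \(X = N_{0}^{\omega} = \flatten{(N_{0})_{\gamma < \omega}}\) (length \(\omega^{2}\)); then \(N_{0} \frown N_{0}^{\omega} = N_{0}^{\omega}\) and \(N_{0} \neq \Nil^{\StructureListOpen} = \varepsilon\), yet no element of the domain absorbs a \emph{finite} prefix, because every element of \(\mathfrak{L}\) is a concatenation of blocks from \(\mathcal{N}\) whose entries increase without bound within each block. This is also the real reason for lengths up to \(\omega^{3}\): the domain must contain \(N_{0}^{\omega}\) and be closed under \(\ConsSymbol\) and \(\frown\), which forces concatenations of fewer than \(\omega^{2}\) many \(\mathcal{N}\)-blocks followed by a finite tail (\cref{def:right_canc:structure,lem:2}). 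With that domain, your planned stabilization analogue of \cref{lem:list_equations_stabilize} does go through (this is \cref{lem:4,lem:5,lem:7,lem:1} in the paper), and the induction axioms of every step size are verified as you sketch.
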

We proceed as usual by constructing a structure that satisfies the base theory of lists with append together with quantifier-free induction for lists, but which contains elements \(l_{1}, l_{2}\) such that \(l_{1} \frown l_{2} = l_{2}\) and \(l_{1} \neq \varepsilon\).
Since the concatenation of transfinite sequences of length greater or equal to \(\omega\) does not have the right cancellation property, as for example \(a \frown a^{\omega} = a^{\omega}\), it seems natural to use concatenation as an interpretation of the append symbol \(\ListAppendSymbolUntyped\).

In \cref{sec:two_step} we have already mentioned that, in order to construct a model
 of \(\TheoryListBase + \ListInductionSchema{\Open(\LanguageListBase)}\) we have to avoid transfinite sequences \(\lambda\) such that \(\lambda = w \frown \lambda\) for some \(w \in \NaturalNumbers^{*}\), cf.~\cref{lem:11}.
However, we may introduce sequences that have a transfinitely periodic structure, such as, the sequence \(N_{0}^{\omega} = N_{0} \frown N_{0}^{\omega}\) of length \(\omega^{2}\).

In the following we define the set of elements that we will use for the construction of the model of quantifier-free big-step induction.
\newcommand{\StructureListOpen}{M_{2}}
\newcommand{\ListAppendInt}[3]{#2 \frown^{#1} #3}
\newcommand{\ListConsInt}[3]{\ConsSymbol^{#1}(#2,#3)}
\newcommand{\ListNilInt}[1]{\Nil^{#1}}
\begin{definition}
  \label{def:right_canc:structure}
  The structure \(\StructureListOpen\) interprets the sort \(\SortIndividuals\) as the set \(\NaturalNumbers\) and the sort \(\SortList\) as the set \(\mathfrak{L}\) given by
  \[
    \left\{ \flatten{\mathfrak{l}} \frown w \mid w \in \NaturalNumbers^{*}, \mathfrak{l} \in \mathcal{N}^{\beta}, \beta < \omega^{2}\right\}.
  \]
  Furthermore, the structure \(\StructureListOpen\) interprets the non-logical symbols as follows
  \begin{gather*}
    \Nil^{\StructureListOpen} \coloneqq \varepsilon,
    \\
    \ListConsInt{\StructureListOpen}{n}{l} \coloneqq n \frown l,
    \\
    \ListAppendInt{\StructureListOpen}{l_{1}}{l_{2}} \coloneqq l_{1} \frown l_{2}.
  \end{gather*}
\end{definition}
We will now first ensure that the structure \(\StructureListOpen\) defined above is indeed a well-defined \(\LanguageListAppend\) structure, that is, that it is closed under the functions \(\NilInterpreted{\StructureListOpen}\), \(\ConsSymbol^{\StructureListOpen}\), and \(\ListAppendSymbolUntyped^{\StructureListOpen}\).
\begin{lemma}
  \label{lem:2}
  \(\StructureListOpen\) is an \(\LanguageListAppend\) structure.
\end{lemma}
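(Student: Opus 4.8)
The plan is to verify that the domain $\mathfrak{L}$ is closed under the three interpreted function symbols $\NilInterpreted{\StructureListOpen}$, $\ConsSymbol^{\StructureListOpen}$, and $\frown^{\StructureListOpen}$; once this is done, $\StructureListOpen$ is by definition an $\LanguageListAppend$ structure. The key preliminary observation is that every element of $\mathcal{N}$ has length exactly $\omega$: if $N = w_{N} \frown N_{k_{N}}$, then $|N| = |w_{N}| + \omega = \omega$ since $|w_{N}|$ is finite. Hence $\mathcal{N} \subseteq \NaturalNumbers^{\omega}$, the flattening $\flatten{\mathfrak{l}}$ of any $\mathfrak{l} \in \mathcal{N}^{\beta}$ is well-defined (with block length $\omega$) and has length $\omega \cdot \beta < \omega^{3}$, so each element of $\mathfrak{L}$ is a transfinite sequence of natural numbers of length $< \omega^{3}$. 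I will also use two auxiliary facts: first, that $\mathcal{N}$ is closed under prepending a finite sequence, i.e.\ $w \frown N \in \mathcal{N}$ for $w \in \NaturalNumbers^{*}$ and $N \in \mathcal{N}$, which is immediate since $(w \frown w_{N}) \frown N_{k_{N}} \in \mathcal{N}$ is again of length $\omega$; and second, that flattening distributes over concatenation, $\flatten{\mathfrak{a} \frown \mathfrak{b}} = \flatten{\mathfrak{a}} \frown \flatten{\mathfrak{b}}$ for $\mathfrak{a}, \mathfrak{b}$ sequences over $\NaturalNumbers^{\omega}$, which follows directly from the definitions of flattening and concatenation together with the distributivity $\omega \cdot (\beta_{1} + \beta_{2}) = \omega \cdot \beta_{1} + \omega \cdot \beta_{2}$ of ordinal multiplication.

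For $\NilInterpreted{\StructureListOpen} = \varepsilon$ I take $\beta = 0$ and $w = \varepsilon$, so that $\flatten{\mathfrak{l}} \frown w = \varepsilon \in \mathfrak{L}$. For closure under $\ConsSymbol^{\StructureListOpen}$, let $l = \flatten{\mathfrak{l}} \frown w \in \mathfrak{L}$ and $n \in \NaturalNumbers$, so that $\ListConsInt{\StructureListOpen}{n}{l} = (n) \frown \flatten{\mathfrak{l}} \frown w$. If $\beta = 0$ then $l = w$ is finite and $(n) \frown w \in \NaturalNumbers^{*}$ gives membership directly. If $\beta > 0$, I replace the first block $\mathfrak{l}_{0}$ by $(n) \frown \mathfrak{l}_{0}$, setting $\mathfrak{l}' \coloneqq ((n) \frown \mathfrak{l}_{0}) \frown (\mathfrak{l} \uparrow 1)$; since $(n) \frown \mathfrak{l}_{0} \in \mathcal{N}$ and has length $\omega$ by the closure fact above, we get $\mathfrak{l}' \in \mathcal{N}^{\beta}$ and $\flatten{\mathfrak{l}'} = (n) \frown \flatten{\mathfrak{l}}$, whence $\ListConsInt{\StructureListOpen}{n}{l} = \flatten{\mathfrak{l}'} \frown w \in \mathfrak{L}$.

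The main work is closure under $\frown^{\StructureListOpen}$. Writing $l_{1} = \flatten{\mathfrak{l}_{1}} \frown w_{1}$ and $l_{2} = \flatten{\mathfrak{l}_{2}} \frown w_{2}$ with $\mathfrak{l}_{i} \in \mathcal{N}^{\beta_{i}}$, $\beta_{i} < \omega^{2}$, I compute $l_{1} \frown l_{2} = \flatten{\mathfrak{l}_{1}} \frown w_{1} \frown \flatten{\mathfrak{l}_{2}} \frown w_{2}$ using the associativity part of \cref{lem:ordinal_indexed_sequence_concatenation_has_left_cancellation}. If $\beta_{2} = 0$ then $l_{2} = w_{2}$ and $l_{1} \frown l_{2} = \flatten{\mathfrak{l}_{1}} \frown (w_{1} \frown w_{2}) \in \mathfrak{L}$. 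If $\beta_{2} > 0$, I absorb the finite $w_{1}$ into the first block of $\flatten{\mathfrak{l}_{2}}$: setting $\mathfrak{l}_{2}' \coloneqq (w_{1} \frown \mathfrak{l}_{2,0}) \frown (\mathfrak{l}_{2} \uparrow 1) \in \mathcal{N}^{\beta_{2}}$, the prepending fact gives $w_{1} \frown \mathfrak{l}_{2,0} \in \mathcal{N}$ of length $\omega$ and $\flatten{\mathfrak{l}_{2}'} = w_{1} \frown \flatten{\mathfrak{l}_{2}}$. Then, by the distributivity of flattening over concatenation, $l_{1} \frown l_{2} = \flatten{\mathfrak{l}_{1} \frown \mathfrak{l}_{2}'} \frown w_{2}$ with $\mathfrak{l}_{1} \frown \mathfrak{l}_{2}' \in \mathcal{N}^{\beta_{1} + \beta_{2}}$. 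The one genuinely nontrivial point is that this lands back inside $\mathfrak{L}$, for which we need $\beta_{1} + \beta_{2} < \omega^{2}$; this holds precisely because $\omega^{2}$ is additively indecomposable (a power of $\omega$), so the sum of two ordinals below it stays below it. I expect this ordinal bookkeeping — isolating and merging the leading $\omega$-block and invoking the additive indecomposability of $\omega^{2}$ — to be the crux of the argument, whereas the length computations and the checks that each rearrangement stays within $\mathcal{N}$ are routine.
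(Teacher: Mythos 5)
Your proof is correct and follows essentially the same route as the paper's: closure under $\Nil$, $\ConsSymbol$ (merging the new head into the leading $\omega$-block), and $\frown$ (absorbing the intermediate finite word into the first block of the second operand's decomposition, then bounding the resulting block index below $\omega^{2}$). The only difference is presentational: you state explicitly the auxiliary facts the paper uses silently — that elements of $\mathcal{N}$ have length $\omega$, that flattening distributes over concatenation, and that $\beta_{1} + \beta_{2} < \omega^{2}$ holds by additive indecomposability of $\omega^{2}$ — which is a welcome tightening rather than a different argument.
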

\begin{proof}
  We have to show that \(\StructureListOpen\) is closed under the operations \(\Nil^{\StructureListOpen}\), \(\ListConsInt{\StructureListOpen}{\cdot}{\cdot}\), and \(\ListAppendInt{\StructureListOpen}{\cdot}{\cdot}\).
  We have \(\ListNilInt{\StructureListOpen} = \varepsilon \in \NaturalNumbers^{*} \subseteq \mathfrak{L}\).
  Now let \(n \in \NaturalNumbers\) and \(l \in \mathfrak{L}\).
  Let \(l = \flatten{(m_{\gamma})_{\gamma \leq \beta}} \frown w\) with \(\beta < \omega^{2}\), \(m \in \mathcal{N}^{\beta}\), and \(w \in \NaturalNumbers^*\).
  If \(\beta = 0\), then \(n \frown l = n \frown \varepsilon \frown w = n \frown w \in \NaturalNumbers^{*} \subseteq \mathfrak{L}\).
  Otherwise, if \(0 < \beta\), then for \(\gamma < \beta\) we let \[
    m_{\gamma}' \coloneqq
    \begin{cases}
      (n) \frown m_{0} & \text{if \(\gamma = 0\),}
      \\
      m_{\gamma} & \text{otherwise}
    \end{cases}
  \]
  Now observe that \((n) \frown \flatten{(m_{\gamma})_{\gamma < \beta}} = \flatten{( m'_{\gamma})_{\gamma < \beta}}\) and clearly \(m'_{\gamma} \in \mathcal{N}\), for all \(\gamma < \beta\).
  Hence, \(\ListConsInt{\StructureListOpen}{n}{l} \in \mathfrak{L}\).
  Now let \(l_{1}, l_{2} \in \mathfrak{L}\) and consider \(\ListAppendInt{\StructureListOpen}{l_{1}}{l_{2}}\).
  If \(l_{1} \in \NaturalNumbers^{*}\), then we use an analogous argument as above.
  If \(l_{2} \in \NaturalNumbers^{*}\), then we clearly have \(l_{1} \frown l_{2} \in \mathfrak{L}\).
  If \(l_{1}\) and \(l_{2}\) are non-standard, then for \(i = 1, 2\) there are \(\alpha_{i} < \omega^{2}\), \(\mathfrak{a}_{i} \in \mathcal{N}^{\alpha_{i}}\), \(w_{i} \in \NaturalNumbers^{*}\) such that \(l_{i} = \flatten{\mathfrak{a}_{i}} \frown w_{i}\).
  Moreover, there exists \(\delta \leq \alpha_{2}\) and \(w' \frown N_{k} \in \mathcal{N}\) such that \(1 + \delta = \alpha_{2}\) and \(l_{2} = w' \frown N_{k} \frown \flatten{(\mathfrak{a}_{2,1 + \gamma})_{\gamma < \delta}}\).
  Therefore, we have \[
    l_{1} \frown l_{2} = \flatten{\mathfrak{a}_{1}} \frown (w_{1} \frown w' \frown N_{k}) \frown \flatten{(\mathfrak{a}_{2,1 + \gamma})_{\gamma < \delta}} \frown w_{2}.
  \]
  Since \(w_{1} \frown w' \frown N_{k} \in \mathcal{N}\) and \(\beta_{1} + \beta_{2} < \omega^{2}\) we have \(l_{1} \frown l_{2} \in \mathfrak{L}\).
\end{proof}
Next we show that \(\StructureListOpen\) satisfies the basic axioms of the list constructors \(\Nil\) and \(\ConsSymbol\), as well as those of the append symbol. 
\begin{lemma}
  \label{lem:3}
  \(\StructureListOpen \models \TheoryListAppend\).
\end{lemma}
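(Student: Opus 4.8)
The plan is to check the four axioms of \(\TheoryListAppend\) one by one against the interpretations fixed in \cref{def:right_canc:structure}; well-definedness of the three operations has already been secured by \cref{lem:2}, so no closure issues remain and every equality below is genuinely an equality between elements of the domain \(\StructureListOpen(\SortList) = \mathfrak{L}\).

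First I would dispatch the two base axioms \eqref{ax:thlistbase:1} and \eqref{ax:thlistbase:2}. Since \(\StructureListOpen\) interprets \(\Nil\) as the empty sequence \(\varepsilon\) and \(\ConsSymbol\) by prepending, \(\ListConsInt{\StructureListOpen}{n}{l} = (n) \frown l\), exactly as the structures \(\StructureListOneStep{m}\) of \cref{sec:two_step} do, the verification is literally the same as in \cref{lem:two_step:structure_satisfies_base_theory}: \(\varepsilon\) has length \(0\) whereas \((n)\frown l\) has length at least \(1\), which gives \eqref{ax:thlistbase:1}; and if \((n_1)\frown l_1 = (n_2)\frown l_2\), then comparing the \(0\)-th entries yields \(n_1 = n_2\), after which left cancellation (\cref{lem:ordinal_indexed_sequence_concatenation_has_left_cancellation}.(\textit{ii})) gives \(l_1 = l_2\), establishing \eqref{ax:thlistbase:2}.

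Next I would turn to the two append axioms. For \eqref{ax:thlistappend:1} I observe that \(\varepsilon\) is a left unit for \(\frown\): since \(\varepsilon \in \NaturalNumbers^{0}\) and \(0 + \beta = \beta\), the definition of concatenation in \cref{def:concatenation_ord_indexed_sequences} collapses to \((\varepsilon \frown l)_{\gamma} = l_{\gamma}\) for every \(\gamma < |l|\), whence \(\ListAppendInt{\StructureListOpen}{\varepsilon}{l} = \varepsilon \frown l = l\). For \eqref{ax:thlistappend:2} I would spell out both sides: the left-hand side interprets to \(((n)\frown l_1)\frown l_2\) and the right-hand side to \((n)\frown(l_1 \frown l_2)\), and these coincide by the associativity of sequence concatenation, \cref{lem:ordinal_indexed_sequence_concatenation_has_left_cancellation}.(\textit{i}).

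There is no genuine obstacle here: the whole content of the lemma is that the three model-theoretic operations are just the empty sequence, prepending, and concatenation, so the syntactic list axioms translate directly into the structural identities already recorded in \cref{lem:ordinal_indexed_sequence_concatenation_has_left_cancellation}. The only point needing a moment's care is to remember that \(\mathfrak{L}\) is genuinely closed under \(\frown\), so that the displayed equalities live inside the domain rather than in some larger ambient set of transfinite sequences; but this is precisely what \cref{lem:2} supplies.
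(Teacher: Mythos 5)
Your proof is correct and takes essentially the same route as the paper's: it checks the four axioms of \(\TheoryListAppend\) one by one against the sequence-theoretic interpretations, using length and first-entry comparisons (plus left cancellation) for \eqref{ax:thlistbase:1} and \eqref{ax:thlistbase:2}, and the left-unit and associativity properties of transfinite concatenation for \eqref{ax:thlistappend:1} and \eqref{ax:thlistappend:2}. The only cosmetic difference is that the paper re-derives the injectivity step for \eqref{ax:thlistbase:2} directly by comparing lengths and entries, rather than appealing to the argument of \cref{lem:two_step:structure_satisfies_base_theory} and to \cref{lem:ordinal_indexed_sequence_concatenation_has_left_cancellation} as you do; the underlying argument is identical.
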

\begin{proof}
  Let \(n \in \NaturalNumbers\) and \(l \in \mathfrak{L}\), then there is some ordinal \(\alpha < \omega^{3}\) such that \(l \in \NaturalNumbers^{\alpha}\).
  Hence, \(\ListConsInt{\StructureListOpen}{n}{l} \in \NaturalNumbers^{1 + \alpha}\).
  Therefore \(\ListConsInt{\StructureListOpen}{n}{l} \neq \ListNilInt{\StructureListOpen} = \varepsilon = \varnothing\).
  Thus \(\StructureListOpen \models \eqref{ax:thlistbase:1}\).
  Now let \(n_{1}, n_{2} \in \NaturalNumbers\) and \(l_{1}, l_{2} \in \mathfrak{L}\) and assume that \(n_{1} \frown l_{1} = n_{2} \frown l_{2}\).
  For \(i = 1, 2\), let \(\alpha_{i} < \omega^{3}\) such that \(l_{i} \in \NaturalNumbers^{\alpha_{i}}\).
  We thus have \(1 + \alpha_{1} = 1 + \alpha_{2}\) which implies \(\alpha_{1} = \alpha_{2}\).
  Therefore, \(n_{1} = (n_{1} \frown l_{1})_{0} = (n_{2} \frown l_{2})_{0} = n_{2}\).
  Let \(\gamma < \alpha_{1}\), then \(l_{1,\gamma} = (n_{1} \frown l_{1})_{1 + \gamma} = (n_{2} \frown l_{2})_{1 + \gamma} = l_{2,\gamma}\).
  Thus, \(l_{1} = l_{2}\).
  Hence \(\StructureListOpen \models \eqref{ax:thlistbase:2}\).
  Now let \(l \in \mathfrak{L}\).
  We have \(\ListNilInt{\StructureListOpen} \frown l =  \varepsilon \frown l = l\).
  Hence, \(\StructureListOpen \models \eqref{ax:thlistappend:1}\).
  Now let \(n \in \NaturalNumbers\), \(l, l' \in \mathfrak{L}\).
  Then we have
  \[
    \ListAppendInt{\StructureListOpen}{\ListConsInt{\StructureListOpen}{n}{l}}{l'} = ((n) \frown l) \frown l' = (n) \frown (l \frown l') = \ListConsInt{\StructureListOpen}{n}{\ListAppendInt{\StructureListOpen}{l}{l'}}.
  \]
  Thus, \(\StructureListOpen \models \eqref{ax:thlistappend:2}\).
\end{proof}
Since the domain of \(\StructureListOpen\) interprets the sort of lists as transfinite sequences and the append operation as the concatenation of transfinite sequences, we can decompose \(\SortList\) terms as follows.
\begin{lemma}
  \label{lem:4}
  Let \(t(X,\vec{y})\) be a \(\LanguageListAppend\) \(\SortList\)-term and \(\vec{b}\) elements of \(\StructureListOpen\), then there exist \(n \in \NaturalNumbers\) and \(l_{0}, \dots, l_{n} \in \mathfrak{L}\) such that
  \[
    \StructureListOpen \models t(X,\vec{b}) = l_{0} \frown X \frown l_{1} \frown \dots \frown l_{n-1} \frown X \frown l_{n}.
  \]
\end{lemma}
\begin{proof}
  We proceed by induction on the structure of the term \(t\).
  If \(t\) is \(\Nil\), then \(\StructureListOpen \models t = \varepsilon\), and thus we are done.
  If \(t\) is the variable \(X\), then we are done by letting \(n = 0\) and \(l_{0} = \varepsilon \in \NaturalNumbers^{0}\).
  If \(t\) is of the form \(\Cons{u}{t'}\), then \(\StructureListOpen \models u(\vec{b}) = k\), for some \(k \in \NaturalNumbers\).
  Hence we apply the induction hypothesis in order to obtain \(n' \in \NaturalNumbers\) and \(l_{0}', \dots, l_{n'}' \in \mathfrak{L}\) such that \(\StructureListOpen \models t'(X,\vec{b}) = l_{0}' \frown X \frown \dots \frown l_{n' -1}' \frown X \frown l_{n'}'\).
  Hence,
  \[
    \StructureListOpen \models t(X,\vec{b}) =  (k) \frown l_{0}' \frown X \frown \dots \frown l_{n'-1}' \frown X \frown l'_{n'}.
  \]
  Thus, we let \(n = n'\) and \(l_{0} = (k)) \frown l_{0}'\) and \(l_{i} = l_{i}'\) for \(1 \leq i \leq n\).
  If \(t\) is of the form \(t_{1} \frown t_{2}\), then simply apply the induction hypothesis to \(t_{1}\) and \(t_{2}\).
\end{proof}
Equational predicates over \(\StructureListOpen\) in one variable stabilize eventually in a similar way to \cref{lem:list_equations_stabilize}.
\begin{lemma}
  \label{lem:5}
  Let \(E(X)\) be an \(\LanguageListAppend(\StructureListOpen)\) equation such that \(\StructureListOpen \not \models E(X)\), then there exists \(N \in \NaturalNumbers\) such that \(\StructureListOpen \not \models E((n) \frown l)\) for all \(n \geq N\) and \(l \in \mathfrak{L}\).
\end{lemma}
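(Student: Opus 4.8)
The plan is to reduce the statement to a comparison of two transfinite sequences obtained from the block decomposition of \cref{lem:4}. First I would dispose of the trivial case: if \(E\) is an equation between \(\SortIndividuals\)-terms, then since \(\LanguageListAppend\) has no function symbol with target sort \(\SortIndividuals\), the variable \(X\) cannot occur in \(E\), so \(\StructureListOpen \not\models E(X)\) just means that \(E\) is false and any \(N\) works. Hence assume \(E(X)\) is \(u(X) = v(X)\) with \(u,v\) of sort \(\SortList\). By \cref{lem:4} there are fixed \(a_0,\dots,a_p,b_0,\dots,b_q \in \mathfrak{L}\) with
\[ \StructureListOpen \models u(X) = a_0 \frown X \frown \dots \frown X \frown a_p, \qquad \StructureListOpen \models v(X) = b_0 \frown X \frown \dots \frown X \frown b_q, \]
where \(p\) and \(q\) count the occurrences of \(X\). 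Writing \(X_n \coloneqq (n) \frown l\), which lies in \(\mathfrak{L}\) by \cref{lem:2}, the goal becomes: for all large \(n\) and every \(l\), \(u(X_n) \neq v(X_n)\). I would prove this by induction on \(\min(p,q)\).

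For the induction step, assume \(p,q \geq 1\) and compare the leading blocks \(a_0,b_0\). If they have different lengths, say \(|a_0| < |b_0|\) (the other case being symmetric), then position \(|a_0|\) of \(u(X_n)\) is the head \(n\) of the leading \(X_n\)-block, whereas the same position of \(v(X_n)\) still lies inside \(b_0\) and equals the fixed number \((b_0)_{|a_0|}\); choosing \(N \coloneqq (b_0)_{|a_0|}+1\) separates them for all \(n \geq N\), uniformly in \(l\). If \(|a_0| = |b_0|\) but \(a_0 \neq b_0\), the two sequences already differ within this common prefix, so \(N \coloneqq 0\) works. Finally, if \(a_0 = b_0\), I would strip the common prefix \(a_0 \frown X_n\): since \(u(X_n) = (a_0 \frown X_n) \frown u'(X_n)\) and \(v(X_n) = (a_0 \frown X_n) \frown v'(X_n)\), where \(u',v'\) are the decompositions with \(p-1\) and \(q-1\) occurrences of \(X\), left cancellation (\cref{lem:ordinal_indexed_sequence_concatenation_has_left_cancellation}) gives \(u(X_n) = v(X_n)\) iff \(u'(X_n) = v'(X_n)\). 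The same cancellation applied at \(X=l\) shows that \(\StructureListOpen \not\models E(X)\) is equivalent to the non-validity of \(u' = v'\), so the induction hypothesis applies and its \(N\) transfers verbatim.

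It remains to treat the base case \(\min(p,q)=0\); say \(p=0\), so that \(u(X)=a_0\) is constant (the case \(q=0\) is symmetric). If also \(q=0\), then \(v(X)=b_0\) is constant and the hypothesis forces \(a_0 \neq b_0\), whence \(N \coloneqq 0\) works. If \(q \geq 1\), then \(v(X_n) = b_0 \frown (n) \frown l \frown \dots\), and I would distinguish by length: if \(|b_0| \geq |a_0|\) then \(|v(X_n)| \geq |b_0| + 1 > |a_0| = |u(X_n)|\), so the sequences already differ in length and \(N \coloneqq 0\) works; if \(|b_0| < |a_0|\), the marker argument above applies at position \(|b_0|\), giving \(N \coloneqq (a_0)_{|b_0|}+1\).

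The main obstacle is that the constant blocks \(a_i,b_j\) are genuine transfinite sequences of length up to \(\omega^2\) whose entries are unbounded, so one cannot simply pick \(n\) larger than all values occurring in them. The observation that makes the argument uniform in \(l\) and robust against infinite blocks is that only the finitely many entries at the specific mismatch positions \(|a_0|\) (respectively \(|b_0|\)) matter: these are fixed natural numbers, and installing a sufficiently large \(n\) as the head of every \(X_n\)-block turns that head into an unambiguous marker. The delicate part is combining this marker argument with the length comparisons and, crucially, with left cancellation to drive the induction through despite the infinite blocks; the remaining steps are routine sequence bookkeeping, analogous to \cref{lem:list_equations_stabilize}.
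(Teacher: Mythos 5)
Your proof is correct and takes essentially the same route as the paper's: both rest on the block decomposition of \cref{lem:4}, on \cref{lem:ordinal_indexed_sequence_concatenation_has_left_cancellation} to cancel agreeing prefixes, and on the observation that the head \(n\) of \((n) \frown l\) acts as a marker that can be pushed above the fixed entry at the first position of disagreement. The only difference is organizational: you strip one block at a time by induction on \(\min(p,q)\), whereas the paper directly selects the least index \(j_{0}\) of disagreeing blocks and splits off their longest common prefix before applying the same marker argument.
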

\begin{proof}
  Let \(E(X)\) be \(t_{1}(X) = t_{2}(X)\), then by Lemma~\ref{lem:4} for \(i =1, 2 \) there exist \(n_{i} \in \NaturalNumbers\) and \(l_{0}^{i},\dots,l_{n_{i}}^{i} \in \mathfrak{L}\) such that
  \[
    \StructureListOpen \models t_{i} = l_{0}^{i} \frown X \frown \dots \frown l_{n_{i}-1}^{i} \frown X \frown l_{n_{i}}^{i}.
  \]
  By the symmetry of equality we can assume \(n_{1} \leq n_{2}\) without loss of generality.
  Since \(\StructureListOpen \not \models E(X)\) we either have \(n_{1} \neq n_{2}\) or \(l_{i}^{1} \neq l_{i}^{2}\) for some \(i \in \{ 0, \dots, n_{1}\}\).
  We start by assuming that \(l_{i}^{1} = l_{i}^{2}\) for \(i = 0, \dots, n_{1}\) and \(n_{1} < n_{2}\).
  Then by the left cancellation of \(\frown\) we obtain
  \begin{equation*}
    \StructureListOpen \models E(X) \leftrightarrow \varepsilon = X \frown l_{n_{1}+1}^{2} \frown \dots \frown l_{n_{2}-1}^{2} \frown X \frown l_{n_{2}}^{2}.
  \end{equation*}
  Hence, we have \(\StructureListOpen \not \models E((n) \frown l)\), for all \(n \in \NaturalNumbers\) and \(l \in \mathfrak{L}\).
  Now consider the case where there exists \(j \in \{0, \dots, n_{1}\}\) such that \(l_{j}^{1} \neq l_{j}^{2}\) and let \(j_{0} \in \{ 0, \dots, n_{1}\}\) be the least such number.
  There are sequences \(l, l_{j_{0}}^{1\prime}\) and \(l_{j_{0}}^{2\prime}\) such that \(l_{j_{0}}^{i} = l \frown l_{j_{0}}^{i\prime}\) for \(i = 1, 2\) and either \(|l_{j_{0}}^{1\prime}| = 0\), \(|l_{j_{0}}^{2\prime}| \geq 1\), or \(|l_{j_{0}}^{1\prime}| \geq 1\), \(|l_{j_{0}}^{2\prime}| = 0\), or \(|l_{j_{0}}^{1'}| \geq 1\), \(|l_{j_{0}}^{2'}| \geq 1\) and \((l_{j_{0}}^{1'})_{0} \neq (l_{j_{0}}^{2'})_{0}\).
  Hence, by left cancellation of concatenation, we obtain
  \begin{multline*}
    \StructureListOpen \models E(X) \leftrightarrow l_{j_{0}}^{1\prime} \frown X \frown \dots \frown l_{n_{1}-1}^{1} \frown X \frown l_{n_{1}} = \\ l_{j_{0}}^{2\prime} \frown X \frown \dots \frown l_{n_{2}-1}^{2} \frown X \frown l_{n_{2}}.
  \end{multline*}
  If \(l_{j_{0}}^{1'} = \varepsilon\) and \(l_{j_{0}}^{2'} \neq \varepsilon\), then for \(n \neq (l_{j_{0}}^{2})_{0}\), we have \(\StructureListOpen \not \models E((n) \frown l)\) for all \(l \in \mathfrak{L}\).
  The case where \(l_{j_{0}}^{1'} \neq \varepsilon\) and \(l_{j_{0}}^{2'} = \varepsilon\) is symmetric.
  Finally, in the case that \(l_{j_{0}}^{1'}, l_{j_{0}}^{2'} \neq \varepsilon\) with \((l_{j_{0}}^{1'})_{0} \neq (l_{j_{0}}^{2'})\), we trivially have \(\StructureListOpen \not \models E(l)\) for all \(l \in \mathfrak{L}\).
\end{proof}
As an immediate consequence of the previous lemma, we obtain the following result, which essentially says that for a non-standard element \(\lambda\) a \(\SortList\)-equation \(E(X)\) can eventually be stabilized for predecessors of \(\lambda\).
\begin{lemma}
  \label{lem:7}
  Let \(E(X)\) be an \(\LanguageListAppend(\StructureListOpen)\) equation such that \(\StructureListOpen \not \models E(X)\) and \(\lambda \in \mathfrak{L} \setminus \NaturalNumbers^{*}\).
  Then there exists \(N \in \NaturalNumbers\) such that \(\StructureListOpen \not \models E(\lambda \uparrow n)\) for all \(n \geq N\).
\end{lemma}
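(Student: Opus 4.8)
The plan is to obtain the statement directly from \cref{lem:5} once one observes that the entries of a non-standard element at finite positions diverge to infinity. First I would invoke \cref{lem:5} to fix a number $N' \in \NaturalNumbers$ with $\StructureListOpen \not\models E((n) \frown l)$ for every $n \geq N'$ and every $l \in \mathfrak{L}$.

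Next I would record the elementary decomposition of a suffix. For any finite $n$, \cref{def:suffix} gives $(\lambda \uparrow n)_{0} = \lambda_{n}$ and $(\lambda \uparrow n)_{1+\gamma} = \lambda_{n+1+\gamma} = (\lambda \uparrow (n+1))_{\gamma}$, so that
\[
  \lambda \uparrow n = (\lambda_{n}) \frown (\lambda \uparrow (n+1)).
\]
Here $\lambda_{n} \in \NaturalNumbers$ is the $n$-th entry, and $\lambda \uparrow (n+1) \in \mathfrak{L}$ since $\mathfrak{L}$ is closed under predecessors: removing finitely many initial elements keeps us inside $\mathfrak{L}$, because the first block of a non-standard element lies in $\mathcal{N}$ and hence has length $\omega$, so it absorbs every finite suffix, and $\mathcal{N}$ is itself closed under $\uparrow 1$.

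The heart of the argument is the claim that $\lambda_{n} \to \infty$ as $n$ ranges over the finite positions. Since $\lambda$ is non-standard, write $\lambda = \flatten{\mathfrak{l}} \frown w$ with $\mathfrak{l} \in \mathcal{N}^{\beta}$ for some $1 \leq \beta < \omega^{2}$ and $w \in \NaturalNumbers^{*}$. Every finite position $n$ falls inside the first block $\mathfrak{l}_{0} \in \mathcal{N}$, which has length $\omega$; indeed the flattening definition yields $\lambda_{n} = (\mathfrak{l}_{0})_{n}$ for all finite $n$. Writing $\mathfrak{l}_{0} = w_{0} \frown N_{k_{0}}$, for $n \geq |w_{0}|$ we obtain $\lambda_{n} = k_{0} + (n - |w_{0}|)$, which grows without bound. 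Consequently there is $N \geq |w_{0}|$ with $\lambda_{n} \geq N'$ for all $n \geq N$.

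Finally I would combine the pieces: for $n \geq N$ we have $\lambda_{n} \geq N'$, so \cref{lem:5} applied with first element $\lambda_{n}$ and tail $\lambda \uparrow (n+1) \in \mathfrak{L}$ gives $\StructureListOpen \not\models E((\lambda_{n}) \frown (\lambda \uparrow (n+1)))$, that is, $\StructureListOpen \not\models E(\lambda \uparrow n)$, as required. The only step beyond bookkeeping is the divergence $\lambda_{n} \to \infty$; once one sees that the increasing tails $N_{k}$ hard-wired into $\mathcal{N}$ force the finite-position entries of any non-standard sequence to diverge, the result is an immediate appeal to \cref{lem:5}.
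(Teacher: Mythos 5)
Your proof is correct and follows essentially the same route as the paper: both invoke \cref{lem:5} to get a threshold and then exploit the fact that the increasing tail \(N_{k}\) in the first \(\mathcal{N}\)-block of a non-standard element forces the finite-position entries \(\lambda_{n}\) to grow beyond that threshold, so that \(\lambda \uparrow n = (\lambda_{n}) \frown (\lambda \uparrow (n+1))\) falls under \cref{lem:5}. If anything, your write-up is slightly more careful than the paper's (which simply aligns a suffix with \(N_{m_{0}}\)), since you make explicit both the closure of \(\mathfrak{L}\) under finite suffixes and the divergence computation \(\lambda_{n} = k_{0} + (n - |w_{0}|)\).
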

\begin{proof}
  First by applying \cref{lem:5} we obtain \(m_{0}\) such that \(M \not \models E((m) \frown l)\) for all \(m \geq m_{0}\) and \(l \in \mathfrak{L}\).
  Since \(\lambda \not\in \NaturalNumbers^{*}\), there clearly is \(n_{0} \in \NaturalNumbers\) such that \(\lambda \uparrow n_{0} = N_{m_{0}} \frown \lambda'\) for some \(\lambda' \in \mathfrak{L}\).
  Since \((\lambda \uparrow N + k)_{0} = m_{0} + k \geq m_{0}\) for \(k \in \NaturalNumbers\), we have \(\StructureListOpen \not \models E(\lambda \uparrow n)\) for all \(n \geq n_{0}\).
\end{proof}
The previous two lemmas show that the truth value of formulas in \(\StructureListOpen\) on non-standard elements eventually synchronizes with that on standard elements, when considering sufficiently distant predecessors.
\begin{lemma}
  \label{lem:1}
  Let \(\varphi(X)\) be an open \(\LanguageListAppend(\StructureListOpen)\) formula and \(\lambda \in \mathfrak{L}\), then there exists \(n_{0} \in \NaturalNumbers\) such that
  \[
    \StructureListOpen \models \varphi(\lambda \uparrow n) \leftrightarrow \varphi((n)),
  \]
  for all \(n \geq n_{0}\).
\end{lemma}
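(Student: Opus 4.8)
The plan is to decompose $\varphi$ into its atomic constituents and show that each atom receives the same truth value at $\lambda \uparrow n$ and at $(n)$ once $n$ is large enough; since $\varphi$ is a Boolean combination of its atoms, the equivalence then propagates to $\varphi$ itself. I would first dispose of the trivial case: if $\lambda \in \NaturalNumbers^{*}$ is standard, then $\lambda \uparrow n$ is only defined for $n \leq |\lambda|$, so choosing $n_{0} = |\lambda| + 1$ makes the claim vacuous. Hence from now on I assume $\lambda \in \mathfrak{L} \setminus \NaturalNumbers^{*}$ is non-standard, which is also the hypothesis demanded by \cref{lem:7}.

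Next I would classify the atoms of $\varphi(X)$. Since the only predicate of $\LanguageListAppend$ is equality, every atom is an equation, either between $\SortIndividuals$-terms or between $\SortList$-terms. As there is no function symbol returning an element of sort $\SortIndividuals$ from lists, no $\SortIndividuals$-term can contain the $\SortList$-variable $X$; thus every $\SortIndividuals$-equation, and every $\SortList$-equation not containing $X$, has a truth value independent of the interpretation of $X$. For such an atom $E$ the formulas $E(\lambda \uparrow n)$ and $E((n))$ are literally the same, so the equivalence $\StructureListOpen \models E(\lambda \uparrow n) \leftrightarrow E((n))$ holds for every $n$.

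It remains to treat the $\SortList$-equations $E(X)$ in which $X$ actually occurs. If $\StructureListOpen \models E(X)$, then both $E(\lambda \uparrow n)$ and $E((n))$ hold for all $n$ and the equivalence is immediate. If $\StructureListOpen \not\models E(X)$, I would invoke the two stabilization lemmas: \cref{lem:5}, applied with $l = \varepsilon$, yields an $N$ beyond which $\StructureListOpen \not\models E((n))$ (using $(n) \frown \varepsilon = (n)$), while \cref{lem:7} yields an $N'$ beyond which $\StructureListOpen \not\models E(\lambda \uparrow n)$. For $n \geq \max(N, N')$ both atoms are false, so the equivalence holds. Taking $n_{0}$ to be the maximum of the finitely many thresholds obtained over all atoms of $\varphi$, every atom receives the same truth value at $\lambda \uparrow n$ and at $(n)$ for $n \geq n_{0}$, whence $\StructureListOpen \models \varphi(\lambda \uparrow n) \leftrightarrow \varphi((n))$.

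The genuine analytic work is already isolated in \cref{lem:5,lem:7}, so the step I expect to demand the most care is the bookkeeping around them: recognizing that atoms not mentioning $X$ are harmless, separating the identically-true case $\StructureListOpen \models E(X)$ from the stabilizing case, and securing the non-standardness hypothesis of \cref{lem:7} by discharging standard $\lambda$ beforehand. Once these points are handled, assembling $n_{0}$ from the finitely many atomic thresholds and lifting the pointwise atomic agreement to the full Boolean combination is routine.
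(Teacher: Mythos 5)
Your proof is correct and follows essentially the same route as the paper's: reduce to the \(\SortList\)-equations of \(\varphi\), separate the identically true atoms from those with \(\StructureListOpen \not\models E(X)\), apply \cref{lem:5} (with \(l = \varepsilon\)) and \cref{lem:7} to the latter, and take the maximum of the finitely many thresholds. You are in fact slightly more careful than the paper, which invokes \cref{lem:7} without explicitly discharging the case of standard \(\lambda\) (where the non-standardness hypothesis of that lemma fails and \(\lambda \uparrow n\) is undefined for large \(n\)); your vacuity observation closes that small gap cleanly.
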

\begin{proof}
  Clearly, it suffices to consider the \(\SortList\)-equations of \(\varphi\), since the \(\SortIndividuals\)-equations do not depend on the variable \(X\).
  Let \(E_{1}(X), \dots, E_{k}(X)\) be the atoms of \(\varphi\) with \(\StructureListOpen \not \models E_{i}(X)\), for \(i = 1, \dots, k\).
  Then by \cref{lem:7,lem:5} there is \(n_{0} \in \NaturalNumbers\) such that \(M_{2} \not \models E_{i}(\lambda \uparrow n)\) and \(M_{2} \not \models E_{i}((n))\) for \(n \geq n_{0}\) and \(i = 1, \dots, k\).
  Since we have \(M_{2} \models E(X)\) for the other \(\SortList\)-atoms of \(\varphi\), we obtain \(M_{2} \models \varphi(\lambda \uparrow n) \leftrightarrow \varphi((n))\) for \(n \geq n_{0}\).
\end{proof}
We are now ready to show that \(\StructureListOpen\) satisfies open big-step induction.
\begin{proposition}
  \label{pro:2}
  Let \(m \in \NaturalNumbers\) with \(m \geq 1\), then \(\StructureListOpen \models \ListBigStepInductionSchema{\Open(\LanguageListAppend)}{m}\).
\end{proposition}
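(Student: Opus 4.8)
The plan is to verify the schema semantically. Fix $m \geq 1$ and a quantifier-free $\LanguageListAppend(\StructureListOpen)$ formula $\Formula(X)$ (absorbing the parameters $\vec{z}$ into constants, as the earlier lemmas already do). Assuming that the two premises of $\ListBigStepInductionAxiom{x}{m}{\Formula}$ hold in $\StructureListOpen$, I would show $\StructureListOpen \models \Formula(l)$ for every $l \in \mathfrak{L}$, splitting into the standard case $l \in \NaturalNumbers^{*}$ and the non-standard case $l \in \mathfrak{L} \setminus \NaturalNumbers^{*}$.

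For standard $l$ I would argue by induction on the length $|l| \in \NaturalNumbers$. The $m$ base-case premises cover all lists of length $0, 1, \dots, m-1$, since such an $l$ is of the form $\ConsMult{x_{1},\dots,x_{i-1}}{\Nil}$; and if $|l| \geq m$ then $l = \ConsMult{a_{1},\dots,a_{m}}{l'}$ for suitable $a_{1},\dots,a_{m} \in \NaturalNumbers$ and $l' \in \NaturalNumbers^{*}$ with $|l'| = |l| - m$, so the induction hypothesis gives $\Formula(l')$ and the step premise gives $\Formula(l)$. In particular this yields $\StructureListOpen \models \Formula((n))$ for every $n \in \NaturalNumbers$, which is the fact I shall feed into the non-standard case.

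For non-standard $\lambda$ the key tool is \cref{lem:1}: there is $n_{0} \in \NaturalNumbers$ with $\StructureListOpen \models \Formula(\lambda \uparrow n) \leftrightarrow \Formula((n))$ for all $n \geq n_{0}$. Since each $(n)$ is standard, $\Formula((n))$ holds, so $\StructureListOpen \models \Formula(\lambda \uparrow n)$ for every $n \geq n_{0}$. It then remains to climb back from a deep suffix to $\lambda$ itself using the step premise. Here $\mathfrak{L}$ is closed under finite suffixes, so each $\lambda \uparrow n$ lies in $\mathfrak{L}$, and for any $j \geq 1$ we have $\lambda \uparrow ((j-1)m) = \ConsMult{\lambda_{(j-1)m},\dots,\lambda_{jm-1}}{\lambda \uparrow (jm)}$, where the $m$ inserted entries are natural numbers. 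Instantiating the step premise at $X = \lambda \uparrow (jm)$ therefore gives $\Formula(\lambda \uparrow (jm)) \rightarrow \Formula(\lambda \uparrow ((j-1)m))$.

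Concretely I would then pick $k \in \NaturalNumbers$ with $km \geq n_{0}$, so that $\Formula(\lambda \uparrow (km))$ holds, and apply the step premise $k$ times, climbing through $\lambda \uparrow ((k-1)m), \dots, \lambda \uparrow m$ down to $\lambda \uparrow 0 = \lambda$, which yields $\StructureListOpen \models \Formula(\lambda)$ as required. The only point needing care, and hence the main (mild) obstacle, is the bookkeeping forced by the fixed step size $m$: because each step inserts exactly $m$ elements, the climb back to $\lambda$ must begin at a depth divisible by $m$, which is possible precisely because \cref{lem:1} supplies $\Formula(\lambda \uparrow n)$ for \emph{all} $n \geq n_{0}$ rather than for a single value. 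The remaining verifications, namely the closure of $\mathfrak{L}$ under suffixes and that the inserted entries are genuine elements, are routine.
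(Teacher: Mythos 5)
Your proof is correct and takes essentially the same route as the paper's: the same split into standard elements (handled by induction on length, using the \(m\) base premises for lengths \(0,\dots,m-1\)) and non-standard \(\lambda\) (handled by combining \cref{lem:1} with the standard case to obtain \(\varphi(\lambda \uparrow n)\) for all \(n \geq n_{0}\), then climbing back to \(\lambda\) via the step premise). Your bookkeeping for the climb is in fact marginally tidier: you start at a single depth \(km \geq n_{0}\) chosen divisible by \(m\) and descend through suffixes of \(\lambda\) in steps of \(m\), whereas the paper fixes \(m\) consecutive starting depths and derives \(\varphi(w \frown (\lambda \uparrow n_{0}))\) for every finite \(w\); both variants rest on the same key fact, namely that \cref{lem:1} gives the equivalence for \emph{all} sufficiently large \(n\).
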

\begin{proof}
  Let \(\Formula(X)\) be a quantifier-free \(\LanguageListAppend(\StructureListOpen)\) formula.
  Assume that
  \begin{gather*}
    \StructureListOpen \models \bigwedge_{i = 1, \dots, m}\Formula(\ConsMult{x_{1}, \dots, x_{i-1}}{\Nil}), \label{assumption:IB} \tag{\(\ast\)}
    \\
    \StructureListOpen \models \Formula(X) \rightarrow \Formula(\ConsMult{x_{1},\dots,x_{m}}{X}). \label{assumption:IS} \tag{\(\star\)}
  \end{gather*}
  Let \(\lambda \in \mathfrak{L}\).
  If \(\lambda \in \NaturalNumbers^{*}\), then a straightforward induction making use of \eqref{assumption:IB} and \eqref{assumption:IS} yields \(M_{2} \models \varphi(\lambda)\).
  Now we consider the case \(\lambda \notin \NaturalNumbers^{*}\), that is, \(\lambda\) is a non-standard element.
  By \cref{lem:1} there is \(n_{0} \in \NaturalNumbers\) such that \(\StructureListOpen \models \Formula(\lambda \uparrow n)\) if and only if \(\StructureListOpen \models \Formula((n))\) for all \(n \geq n_{0}\).
  In particular, we thus have \[
    \StructureListOpen \models \Formula(\lambda \uparrow n_{0} + m + i) \leftrightarrow \Formula((n_{0} + m + i))
  \] for \(i = 0, \dots, m -1\).
  Since, \(\StructureListOpen \models \Formula(w)\) for all \(w \in \NaturalNumbers^{*}\), we obtain \(\StructureListOpen \models \Formula(\lambda \uparrow n_{0} + m + i)\) for \(i = 0, \dots, m -1\).
  By a straightforward induction starting with \(\StructureListOpen \models \Formula(\lambda \uparrow n_{0} + m - 1)\), \dots, \(\StructureListOpen \models \Formula(\lambda \uparrow n_{0})\) and making use of \eqref{assumption:IS} we obtain \(\StructureListOpen \models \Formula(w \frown (\lambda \uparrow n_{0}))\) for all \(w \in \NaturalNumbers^{*}\).
  Therefore, we have in particular \(\StructureListOpen \models \Formula(\lambda)\).
\end{proof}
\begin{proof}[Proof of \cref{thm:open_ind_not_proves_right_cancellation}]
  Clearly, \(N_{0} \in \mathfrak{L}\).
  Since \(N_{0}^{\omega} = \flatten{(N_{0})_{\gamma < \omega}}\), we have \(N_{0}^{\omega} \in \mathfrak{L}\).
  Now observe that \(N_{0} \frown N_{0}^{\omega} = N_{0}^{\omega} \) but \(N_{0} \neq \varnothing\).
  Hence, by \cref{pro:2} we are done.
\end{proof}
This result is of interest for automated inductive theorem proving, because it essentially provides a lower bound on the power necessary for the proof of a rather simple yet practically relevant property about the important datatype of lists.

The unprovability of right cancellation of concatenation is a first step towards a classification of the inductive power needed to prove certain practically interesting properties of finite Lisp-like lists.
\Cref{thm:open_ind_not_proves_right_cancellation} as well as the auxiliary results of this section give rise to many related questions and conjectures that we will briefly discuss in the following.

We conjecture that even quantifier-free simultaneous induction on several variables with big-steps does not prove right cancellation of the concatenation operation.
Let \(\vec{x} = (x_{1}, \dots, x_{n})\) be a finite sequence and \(i \in \NaturalNumbers\) such that \(1 \leq i \leq n\), then by \(\vec{x}_{< i}\) we denote the sequence \((x_{1}, \dots, x_{i -1})\).
Similarly, \(\vec{x}_{>i}\) denotes the sequence \((x_{i+1}, \dots, x_{n})\).
\newcommand{\ListDiagonalBigStepHeterogeneousInductionAxiom}[3]{
  I_{#1 \curvearrowright #2}^{\mathsf{list}}#3
}
\newcommand{\ListDiagonalBigStepHeterogeneousInductionSchema}[1]{
  {#1}\text{-}\mathrm{IND}^{\mathsf{list}}_{\nearrow_{\curvearrowright}}
}
\begin{definition}
  \label{def:list_diagonal_big_step_heterogeneous_induction}
  Let \(\vec{X} = (X_{1}, \dots, X_{m})\) be pairwise distinct variables with \(m \geq 1\), \(\vec{p} = (p_{1}, \dots, p_{m})\) a sequence of non-zero natural numbers, and \(\varphi(\vec{X}, \vec{z})\) a formula.
  The multivariate big-step list induction axiom \(\ListDiagonalBigStepHeterogeneousInductionAxiom{\vec{X}}{\vec{p}}{\varphi}\) for \(\varphi\) is given by
  \begin{multline*}
    \left(
      \begin{split}
        \bigwedge_{i = 1}^{m}\bigwedge_{j = 1}^{p_{i}} \Forall{\vec{X}_{<i}}{\Forall{\vec{X}_{>i}}{\Forall{x_{1}, \dots, x_{j-1}}{\varphi(\vec{X}_{<i},\ConsMult{x_{1},\dots,x_{j-1}}{\Nil},\vec{X}_{>i},\vec{z})}}} \\
        \wedge \Forall{\vec{X}}{\Forall{\vec{x}^{p_{1}}}{\dots\Forall{\vec{x}^{p_{m}}}{\left(\varphi(\vec{X},\vec{z}) \rightarrow \varphi(\ConsMult{\vec{x}^{p_{1}}}{X_{1}}, \dots, \ConsMult{\vec{x}^{p_{m}}}{X_{m}}, \vec{z})\right)}}}
      \end{split}
    \right)
    \\
    \rightarrow \Forall{\vec{X}}{\varphi(\vec{X})}.
  \end{multline*}
  where the \(\vec{x}^{p_{i}}\) with \(i \in \{ 1, \dots, m\}\) are vectors of variables of sort \(\SortIndividuals\) whose elements are all pairwise distinct.
  Let \(\SetOfFormulas\) be a set of formulas, then theory \(\ListDiagonalBigStepHeterogeneousInductionSchema{\SetOfFormulas}\) is axiomatized by \(\ListDiagonalBigStepHeterogeneousInductionAxiom{\vec{X}}{\vec{p}}{\varphi}\) with \(\varphi(\vec{X},\vec{z}) \in \SetOfFormulas\) and \(\vec{X}, \vec{p}\) as above.
\end{definition}
\begin{conjecture}
  \label{con:double_induction_right_cancellation}
  \(\TheoryListAppend + \ListDiagonalBigStepHeterogeneousInductionSchema{\Open(\LanguageListAppend)} \not \vdash Y \frown X = Z \frown X \rightarrow Y = Z\).
\end{conjecture}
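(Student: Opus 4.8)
The plan is to reuse the structure \(\StructureListOpen\) of \cref{def:right_canc:structure} unchanged, since it already refutes even the general cancellation law: taking \(X = N_{0}^{\omega}\), \(Y = N_{0}\), and \(Z = \varepsilon\) we have \(N_{0} \frown N_{0}^{\omega} = N_{0}^{\omega} = \varepsilon \frown N_{0}^{\omega}\) while \(N_{0} \neq \varepsilon\), so \(\StructureListOpen \not\models Y \frown X = Z \frown X \rightarrow Y = Z\). As \(\StructureListOpen \models \TheoryListAppend\) by \cref{lem:3}, the entire conjecture reduces to the single statement \(\StructureListOpen \models \ListDiagonalBigStepHeterogeneousInductionSchema{\Open(\LanguageListAppend)}\), i.e.\ to upgrading \cref{pro:2} from one induction variable to the coupled multivariate schema. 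To this end I would first prove a multivariate version of the term-decomposition \cref{lem:4} --- every \(\SortList\)-term \(t(X_{1}, \dots, X_{m}, \vec{y})\) equals in \(\StructureListOpen\) an alternating concatenation \(l_{0} \frown X_{i_{1}} \frown l_{1} \frown \dots \frown X_{i_{r}} \frown l_{r}\) of constant blocks \(l_{j} \in \mathfrak{L}\) and (possibly repeated) variable occurrences --- and then a multivariate synchronization lemma generalizing \cref{lem:5,lem:7,lem:1}.

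Given the base- and step-case assumptions of \(\ListDiagonalBigStepHeterogeneousInductionAxiom{\vec{X}}{\vec{p}}{\Formula}\) and a tuple \(\vec{\lambda} \in \mathfrak{L}^{m}\), I would split into two cases. If at least one \(\lambda_{i}\) is standard, I would argue by the contrapositive of the step assumption: peeling \(p_{i}\) elements off each component in lockstep propagates \(\neg\Formula\) to \((\lambda_{1} \uparrow p_{1}, \dots, \lambda_{m} \uparrow p_{m})\), this tuple remains in \(\mathfrak{L}^{m}\), and after finitely many peels some standard component becomes shorter than its step size \(p_{i}\) --- at which point the base case (which fires as soon as a single argument is a short standard list, regardless of the others) yields \(\Formula\) and contradicts the propagated \(\neg\Formula\). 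Hence such tuples satisfy \(\Formula\) without any appeal to synchronization. In particular \(\StructureListOpen \models \Formula\) on every standard tuple, including the length-one tuples used below.

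The genuinely non-standard case is \(\vec{\lambda} \in (\mathfrak{L} \setminus \NaturalNumbers^{*})^{m}\). Here no component ever becomes short, but every component is infinitely truncatable, so, as there are only finitely many atoms and components, I can choose a single step count \(k\) large enough that \(\lambda_{i} \uparrow k p_{i}\) lies in the stable regime of every atom of \(\Formula\) for all \(i\) simultaneously. The synchronization lemma would then give \(\StructureListOpen \models \Formula(\lambda_{1} \uparrow k p_{1}, \dots, \lambda_{m} \uparrow k p_{m}) \leftrightarrow \Formula((a_{1}), \dots, (a_{m}))\) for the tuple of leading entries \(a_{i} = (\lambda_{i} \uparrow k p_{i})_{0}\); the right-hand side holds by the previous paragraph. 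Since each component was truncated by the exact multiple \(k p_{i}\), exactly \(k\) applications of the step assumption rebuild \(\vec{\lambda}\) with no residual phase to correct, yielding \(\StructureListOpen \models \Formula(\vec{\lambda})\).

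The main obstacle is the synchronization lemma in this all-non-standard case. Unlike in \cref{pro:2}, the truncations of the several non-standard components are not independent but are tied together through the single parameter \(k\) and the step sizes \(p_{i}\), so their leading entries grow at the different rates \(k p_{i}\). Establishing that, along this one-parameter diagonal, every \(\SortList\)-equation of \(\Formula\) either holds identically or is eventually falsified --- with a truth value matching its value on the standard tuple of leading entries --- requires extending the left-cancellation analysis of \cref{lem:5} to terms in which several distinct variables occur, possibly repeatedly and interleaved with the constant blocks \(l_{j}\). Ruling out that such a multivariate equation conspires to stay true on a cofinal set of diagonal truncations while failing on standard tuples is where the real difficulty lies, and is the reason the statement is presently only conjectured.
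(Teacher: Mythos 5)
The statement you have been asked to prove is stated in the paper as a \emph{conjecture}: the authors deliberately leave it open, so there is no proof in the paper to compare yours against. Your proposal follows the natural line of attack and matches the paper's own infrastructure exactly: the model \(M_{2}\) of \cref{def:right_canc:structure} already falsifies right cancellation via \(N_{0} \frown N_{0}^{\omega} = N_{0}^{\omega}\) with \(N_{0} \neq \varepsilon\) (this is precisely how \cref{thm:open_ind_not_proves_right_cancellation} is proved), and since \(M_{2} \models \TheoryListAppend\) by \cref{lem:3}, everything indeed reduces to showing that \(M_{2}\) satisfies the multivariate big-step schema of \cref{def:list_diagonal_big_step_heterogeneous_induction}. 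Your treatment of tuples containing a standard component via the contrapositive of the step case is sound, and your observation that truncating each \(\lambda_{i}\) by an exact multiple \(kp_{i}\) lets exactly \(k\) applications of the step case rebuild the tuple without any phase correction is the right way to set up the rebuilding.

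However, the proposal is not a proof, and you concede as much: the multivariate synchronization lemma --- that for all sufficiently large \(k\) every \(\SortList\)-equation of \(\Formula\) has, at the diagonal truncation \((\lambda_{1} \uparrow kp_{1}, \dots, \lambda_{m} \uparrow kp_{m})\), the same truth value as at the corresponding standard tuple --- is exactly where the entire difficulty of the conjecture is concentrated, and it is left unproved. It cannot be a routine extension of \cref{lem:5,lem:7,lem:1}: those lemmas treat a single variable, where every \(\SortList\)-term collapses by \cref{lem:4} to \(l_{0} \frown X \frown \dots \frown X \frown l_{n}\) and left cancellation disposes of the leading block. With several variables one must also control atoms such as \(X_{1} = X_{2}\) or \(X_{1} \frown X_{2} = X_{2} \frown X_{1}\), whose two sides both contain variables being truncated at the \emph{different} rates \(kp_{1}, \dots, kp_{m}\); ruling out that such an equation holds on a cofinal but not eventual set of diagonal truncations (which would defeat both the choice of \(k\) and the rebuilding step) is an open combinatorial question about concatenations of transfinite sequences, not a bookkeeping exercise. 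Until that lemma is established, your argument yields nothing beyond what \cref{thm:open_ind_not_proves_right_cancellation} already gives. In short: right model, right reduction, but the gap you flag at the end is genuine, and it is the entire content of the conjecture.
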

A positive answer to this question would thus greatly improve upon our \cref{thm:open_ind_not_proves_right_cancellation}.
A related question of interest is whether single-element right cancellation can be proven by quantifier-free big-step induction in one variable.

The subject of \ac{AITP} mainly focuses on the mechanization of induction in general, rather than on the mechanization of individual theories.
Nevertheless, the theories of lists with concatenation considered in this section are of some practical relevance.
Hence, it may be valuable to investigate their mechanization separately.
Because of the homomorphic relation between natural numbers with addition and lists with concatenation, it could be especially interesting to investigate whether simple theories
 of lists such as \(\TheoryListAppend + \ListInductionSchema{\forall_{1}(\LanguageListAppend)}\) have finite axiomatizations analogous to the one shown in \cite{shoenfield1958} for natural numbers with addition.

Finally, let us observe that as an immediate consequence of \cref{pro:2} we obtain the unprovability of right-decomposition of list by open big-step induction.
\begin{corollary}
  \label{cor:1}
  \(\TheoryListAppend + \bigcup_{m \geq 1}\ListBigStepInductionSchema{\Open(\LanguageListAppend)}{m}\) does not prove
  \[
    X = \Nil \vee \Exists{x'}{\Exists{X'}{X = X' \frown \Cons{x'}{\Nil}}}.
  \]
\end{corollary}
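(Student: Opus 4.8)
The plan is to reuse the very structure \(\StructureListOpen\) built for \cref{thm:open_ind_not_proves_right_cancellation} as a countermodel, since the hard model-theoretic work is already done. By \cref{lem:3} we have \(\StructureListOpen \models \TheoryListAppend\), and by \cref{pro:2} we have \(\StructureListOpen \models \ListBigStepInductionSchema{\Open(\LanguageListAppend)}{m}\) for every \(m \geq 1\). Hence \(\StructureListOpen\) is a model of the entire theory \(\TheoryListAppend + \bigcup_{m \geq 1}\ListBigStepInductionSchema{\Open(\LanguageListAppend)}{m}\), and it suffices to exhibit a single element of \(\mathfrak{L}\) that falsifies the right-decomposition formula.

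The key observation is how right-decomposition is interpreted in \(\StructureListOpen\). There the append symbol denotes concatenation of transfinite sequences and \(\Cons{x'}{\Nil}\) denotes the one-element sequence \((x')\). Thus an element \(\lambda \in \mathfrak{L}\) satisfies \(\Exists{x'}{\Exists{X'}{\lambda = \ListAppend{X'}{\Cons{x'}{\Nil}}}}\) precisely when \(\lambda = \mu \frown (a)\) for some sequence \(\mu\) and some \(a \in \NaturalNumbers\); since \(|\mu \frown (a)| = |\mu| + 1\), this forces \(|\lambda|\) to be a successor ordinal. Likewise \(\lambda = \Nil^{\StructureListOpen} = \varepsilon\) holds exactly when \(|\lambda| = 0\). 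Consequently any element of \(\mathfrak{L}\) whose length is a nonzero limit ordinal falsifies the whole disjunction.

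Such an element is immediately at hand: the sequence \(N_{0} = (i)_{0 \leq i < \omega}\) lies in \(\mathfrak{L}\) (take \(\mathfrak{l} = (N_{0}) \in \mathcal{N}^{1}\) and \(w = \varepsilon\) in \cref{def:right_canc:structure}, as already noted in the proof of \cref{thm:open_ind_not_proves_right_cancellation}) and has length \(\omega\). As \(\omega\) is a nonzero limit ordinal, \(N_{0}\) is neither \(\varepsilon\) nor of the form \(\mu \frown (a)\), so \(\StructureListOpen \not\models N_{0} = \Nil \vee \Exists{x'}{\Exists{X'}{N_{0} = \ListAppend{X'}{\Cons{x'}{\Nil}}}}\). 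Together with \cref{lem:3,pro:2} this gives a model of the theory in which right-decomposition fails, which is exactly the asserted unprovability. I expect no genuine obstacle here; the only point deserving an explicit line is the reduction of right-decomposition to the requirement that the length be a successor ordinal, after which any limit-length witness such as \(N_{0}\) (or, equally well, \(N_{0}^{\omega}\)) closes the argument.
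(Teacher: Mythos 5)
Your proof is correct and takes essentially the same approach as the paper: the same countermodel \(M_{2}\), the same witness \(N_{0}\), and the same observation that its limit length \(\omega\) prevents writing it as \(\mu \frown (a)\), so the right-decomposition disjunction fails. You merely make explicit what the paper's one-line proof leaves implicit, namely the successor-ordinal argument and the appeal to \cref{lem:3} alongside \cref{pro:2}.
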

\begin{proof}
  Consider the element \(N_{0} \in \DomainOf{\SortList}{\StructureListOpen}\) and observe that \(N_{0} \neq \Nil\) but since \(|N_{0}| = \omega\), we cannot express \(N_{0}\) as \(\lambda \frown (n)\) with \(\lambda \in \NaturalNumbers^{\leq \omega}\) and \(n \in \NaturalNumbers\).
  Now, the claim follows from \cref{pro:2}.
\end{proof}
Clearly, the formula \(X = \Nil \vee \Exists{x'}{\Exists{X'}{X = X' \frown \Cons{x'}{\Nil}}}\) is provable by induction on the formula itself, that is, by \(\exists_{1}\) induction.
This gives rise to the question whether right-decomposition can be proved by \(\forall_{1}\) induction and more generally to the more general question how \(\exists_{1}\) induction and \(\forall_{1}\) induction over lists with concatenation are related.
This question is relevant for \ac{AITP}, since there are systems such as \cite{kersani2013} that are based on \(\exists_{1}\) induction \cite{hetzl2022} and systems such as \cite{cruanes2017} that are based on \(\forall_{1}\) induction \cite[Chapter 5]{vierling2022}.
We plan to investigate this question separately in the future.
\section{Conclusion}
In this article we have shown two main results about induction for lists.
Firstly, in \cref{sec:two_step} we have shown that quantifier-free \((m+1)\)-step induction can in general not be simulated with quantifier-free \(m\)-step induction.
In particular, this result thus renders impossible a reductive implementation of quantifier-free big-step induction in \ac{AITP} systems with an induction mechanism based on quantifier-free induction.
This observation may be relevant for future extensions of systems based on quantifier-free one-step induction mechanism, such as the \ac{AITP} system described in \cite[Section 3.2]{reger2019}. 
The idea is that whenever an induction principle can be reduced to a simpler one, then for the sake of soundness one should consider the reduction.

The second main result of this article, shown in \cref{sec:right_cancellation}, is the unprovability of right cancellation of the concatenation for lists by quantifier-free big-step induction.
Thus automated inductive theorem provers have to implement a comparatively strong induction mechanism in order to the prove seemingly simple property of right cancellation of concatenation.

In the light of the results of \cref{sec:two_step}, a natural choice would be to implement an induction principle that can handle at least \(\forall_{1}\) induction formulas with conjunction.
Such an induction principle permits a reductive implementation of \(\forall_{1}\) big-step induction.
An example of a system implementing such an induction mechanism is the one described in \cite{cruanes2017} and analyzed in \cite[Chapter 5]{vierling2022}.

One direction for future research is to carry out similar investigations focusing on other datatypes, induction principles, and properties.
In principle questions such as the one addressed in \cref{sec:right_cancellation} could be considered for every problem in benchmark suites such as \cite{claessen2015} in order to obtain a classification of the difficulty of the problems that complements empirical results.

Furthermore, the results in this article raise a number of questions and conjectures that we would like to address in the future.
In particular, we would like to investigate \cref{con:double_induction_right_cancellation}, since a positive answer, showing that quantifier-free induction combining, both, simultaneous induction and big-step induction does not prove right cancellation of concatenation, would significantly strengthen the result of \cref{sec:right_cancellation}.
Another interesting question is whether the right injectivity of concatenation (see \cref{lem:9}) can be proved with quantifier-free big-step induction.
Finally, the use of transfinite lists used in this article are reminiscent of streams defined by coinduction.
It could be interesting investigate to which extent the techniques employed for the analysis of \ac{AITP} systems can be transferred to systems that automate the coinduction principle such as \cite{leino2014,einarsdottir2018}.
\begin{acronym}
  \acro{AITP}{automated inductive theorem proving}
\end{acronym}

\bibliography{references.bib}
\bibliographystyle{plain}

\end{document}